\setlist[itemize]{leftmargin=*}
\setlist[enumerate]{leftmargin=*}
\theoremstyle{definition}
\numberwithin{equation}{section}
\newtheorem{theorem}{Theorem}[section] 
\newtheorem{lemma}[theorem]{Lemma}
\theoremstyle{definition}
\newtheorem{prop}[theorem]{Proposition}
\newtheorem{defn}[theorem]{Definition}
\newtheorem{remark}[theorem]{Remark}
\newcommand{\R}{\mathbb R}
\newcommand{\C}{\mathbb C}
\def\P{{\mathbb P}}
\def\rp{{\R\P^2}}
\newcommand{\ds}{\displaystyle}
\def\phi{\varphi}
\begin{document}

\title[Herman rings]{An experimental view of Herman rings for dianalytic maps of $\rp$}

\author{Jane Hawkins }
\address{ Department of Mathematics \\ University of North Carolina
at Chapel Hill\\ CB \#3250 \\ Chapel Hill, North Carolina
27599-3250} 
 \email{jmh@math.unc.edu}

\author{Michelle Randolph}
\address{Department of Mathematics\\ University of North Carolina
at Chapel Hill\\ CB \#3250 \\ Chapel Hill, North Carolina
27599-3250} 
 \email{randolph.michelle@gmail.com}

\keywords{Herman rings, complex dynamics}

\subjclass[2010]{37F10, 30D05}

\date{\today}

\maketitle

\begin{abstract}
We provide an experimental study of the existence of Herman rings in a parametrized family of rational maps preserving antipodal points, and a discussion of their properties on 
$\mathbb{RP}^2$. We study analytic maps of the sphere that project to dianalytic maps on the nonorientable surface, $\mathbb{RP}^2$. They have a known form and we focus on a subset of degree $3$ dianalytic maps and explore their dynamical properties. In particular, we focus on maps for which the Fatou set has a Herman ring. We appeal to dynamical properties of particular maps to justify our assertion that these Fatou components are Herman rings and analyze the parameter space for this family of maps. 

\end{abstract}

\listoffigures

\section{Introduction}\label{introduction}
For a rational map $f$ of the Riemann sphere, a Herman ring is a Fatou component on which $f$ (or an iterate of $f$) is analytically conjugate to a Euclidean rotation of some annulus onto itself. Herman rings, first constructed by Michael Herman,  are impossible for polynomials \cite{Milnor1}. There is a methods of constructing them due to Shishikura involving quasiconformal surgery in which he takes rational functions with Siegel disks, cuts them along their invariant curves, and glues them back together in a particular way that results in a rational map with Herman rings, see \cite{Surgery}. There is also a less well known method by Wang and Zang given in \cite{Twisting} in which they put topological conditions on a map $F$ and create a map $G$ with a Herman ring such that $F$ and $G$ are topologically equivalent.

This paper approaches finding Herman rings in a different way by exploring dianalytic maps of degree 3. Dianalytic maps are maps on the real projective plane, $\mathbb{RP}^2$, which lift to analytic maps of the double cover $\mathbb{C}_{\infty}$. These maps were first mentioned for bicritical maps by Milnor in \cite{Milnor2}, and a general form for dianalytic maps is given by Barza and Ghisa in \cite{BarzaGhisa}. The motivation for this project comes largely from a paper by Goodman and Hawkins \cite{GHawk}, studying bicritical rational maps that project to unicritical maps on $\mathbb{RP}^2$. While many of their results apply to maps of any odd degree, their results focus on the degree $3$ case including proving results about dynamical properties of their maps for some subsets of their parameter space. A perturbation of these maps gives new maps that are no longer bicritical and  we study the dynamical properties of them. An example of this form was mentioned in the last section of their paper but not fully discussed; it is developed in a nonrigourous way in this study, and was the topic of the master's project of the second author \cite{Rand}.  For bicritical maps, the Fatou set cannot contain a Herman ring since all Julia sets are connected \cite{Milnor2,GHawk}, however when we perturb away from the bicritical case, we find maps that we claim  have Herman rings.

While Shishikura's results imply the existence of rational maps with both a Herman ring and periodic orbits of period $>1$, we believe the maps described here are some of the first explicit examples of such maps. Bonifant, Buff, and Milnor have also studied antipodal-preserving cubic maps, specifically antipode preserving cubic maps with critical fixed points at $0$ and $\infty$ \cite{preprint}. They also find Herman rings in this class of maps, discussed in a preprint from 2015 \cite{preprint}, available at the Stony Brook IMS website. Their constructions have some fundamental differences with ours.   Results about Herman rings are typically very difficult to prove; therefore, the claims in this paper are largely based on experimental data and are presented without rigorous proof. We attempt to be as explicit as possible about why we believe these claims to be correct. We state many known results without proof, and give proofs of statements where needed or possible.  

In Section \ref{sec:dianalytic}, we give the general form for dianalytic maps as well as a few results about them. 
We also discuss some of the results from Goodman and Hawkins that serve as the inspiration for this project.
In Section \ref{sec:examples}, we introduce the form of the maps that are the focus of this work and show examples of these maps with Herman rings. We discuss the orientability of the rings when projected onto $\mathbb{RP}^2$. We also attempt to justify as fully as possible why we believe these maps possess Herman rings using many of the results from the previous sections. 

In Section \ref{sec:parameters}, we focus on the parameter space for our maps and show that it is symmetric across both the real and imaginary axis. 
\section{Background on Complex dynamics}\label{sec:background}  A reader familiar with the basics of complex dynamics can skip this section and move to Section \ref{sec:dianalytic} as this section includes definitions and results about iteration of rational maps that are relevant for this paper. Unless otherwise noted, all theorems and definitions are based on Beardon's book \emph{Iteration of Rational Functions} \cite{Beardon}. 
Throughout, we denote the composition of $f$ with $g$ (i.e. the mapping $x\mapsto f(g(x))$) interchangeably by $f\circ g(x)$, $f \circ g$, $fg(x)$, or simply $fg$. 
and  $\mathbb{C}_{\infty}$ denotes the Riemann Sphere. Let $R:\mathbb{C}_{\infty}\rightarrow \mathbb{C}_{\infty}$ be a rational map, that is $R(z)=\frac{a_0+a_1z+...+a_nz^n}{b_0+b_1z+...+b_mz^m}$ where $\deg R = \max\{m,n\} \geq 2$ and denote the $n$-fold composition of $R$ with itself by $R^n=\underbrace{R\circ R \circ ... \circ R}_{\text{n times}}.$ 

\begin{defn} 
	A point $\zeta\in \mathbb{C}_{\infty}$ is a \emph{periodic 	point} if $\zeta$ is fixed by some iterate of $R$, so $R^n(\zeta)=\zeta$ for some $n\in\mathbb{N}$. The least such $n$ is called the \emph{period} of $\zeta$ and we say that $\{\zeta, R(\zeta), R^2(\zeta),...,R^{n-1}(\zeta)\}$ is a \emph{cycle} of period $n$. In particular, $\zeta$ is a \emph{fixed point} of $R$ if it satisfies $R(\zeta)=\zeta$, so $\zeta$ is a periodic point of period $1$.  
\end{defn}
\begin{defn}\label{fixed points}
	Let $\zeta \in \mathbb{C}$ be a fixed point of $R$. Then the derivative $R'(\zeta)$ is defined, and we say that $\zeta$ is:
	\begin{enumerate}
		\item {\em superattracting} if $R'(\zeta) = 0$;
		\item {\em attracting} if $0 < |R'(\zeta)| < 1$;
		\item {\em repelling} if $|R'(\zeta)|> 1$;
		\item {\em rationally indifferent} if $R'(\zeta)$ is a root of unity; and
		\item {\em irrationally indifferent} if $|R'(\zeta)| = 1$, but $R'(\zeta)$ is not a root of unity.
	\end{enumerate}
\end{defn}
For a fixed point, $\zeta$, we will also call $R'(\zeta)$ the \emph{multiplier} of $\zeta$. We classify fixed points at $\infty$ by conjugacy. Specifically if $\infty$ is a fixed point of $R$ and $g(z)=\frac{1}{z}$, then $gRg^{-1}(0)=gR(\infty)=g(\infty)=0.$ So we can classify $0$ as a fixed point of $gRg^{-1}$ and assign this as the classification of $\infty$ as a fixed point of $R$. 
If $\zeta$ is a point of period $k$, under $R$, we can define the multiplier of the $k$ cycle containing $\zeta$ by viewing $\zeta$ as a fixed point of $R^k$. Notice, this is well defined on a $k$ cycle since $$(R^k(\zeta))'=R'(\zeta)R'(R(\zeta))R'(R^2(\zeta))...R'(R^{k-1}(\zeta)).$$

\subsection{The Julia and Fatou Sets}

\begin{defn}
	The \emph{Fatou set} of $R$ denoted $F$ or $F(R)$ is the maximal open set in $\mathbb{C}_{\infty}$ on which the family of maps $\{R^n\}$ is equicontinuous and the \emph{Julia set}, $J$ or $J(R)$ is the complement of $F$ in $\mathbb{C}_{\infty}$.
\end{defn}
Equivalently, the Fatou set is the maximal open set on which the family of iterates $\{R^n\}$ is normal. When considering the Julia and Fatou sets, we can consider any iterate $R^n$ that we desire without altering $F$ and $J$ since:
\begin{theorem}
	For any non-constant rational map $R$ and any positive integer $n$, $F(R^n)=F(R)$ and $J(R^n)=J(R)$.
\end{theorem}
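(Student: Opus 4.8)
The plan is to prove $F(R^n)=F(R)$ directly; since the Julia set is defined as the complement of the Fatou set in $\mathbb{C}_\infty$, the equality $J(R^n)=J(R)$ then follows immediately. I would establish the two inclusions separately, working throughout with the spherical metric on $\mathbb{C}_\infty$, so that normality of a family means that every sequence admits a subsequence converging locally uniformly (possibly to the constant $\infty$).

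First I would show $F(R)\subseteq F(R^n)$. This is the easy direction: the family $\{(R^n)^m\}_{m\geq 1}=\{R^{nm}\}_{m\geq 1}$ is a subfamily of $\{R^k\}_{k\geq 1}$, and any subfamily of a normal (equivalently, equicontinuous) family is itself normal. Hence on any open set where $\{R^k\}$ is normal, so is $\{R^{nm}\}$, which gives the inclusion.

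For the reverse inclusion $F(R^n)\subseteq F(R)$, I would decompose an arbitrary iterate by its residue modulo $n$: writing $k=nm+r$ with $0\leq r\leq n-1$ gives $R^k=R^r\circ (R^n)^m$. Thus $\{R^k\}_{k\geq 1}$ is the union of the $n$ families $\mathcal{F}_r:=\{R^r\circ (R^n)^m\}_{m\geq 1}$ for $r=0,\ldots,n-1$. Assuming $\{(R^n)^m\}$ is normal on an open set $U$, I would argue that each $\mathcal{F}_r$ is normal on $U$, because post-composition with the fixed rational map $R^r$ preserves normality: if $(R^n)^{m_j}\to g$ locally uniformly, then $R^r\circ (R^n)^{m_j}\to R^r\circ g$ locally uniformly. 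Finally, a finite union of normal families is normal (given any sequence, pigeonhole yields a subsequence lying in a single $\mathcal{F}_r$, to which normality of that family applies), so $\{R^k\}$ is normal on $U$, giving $U\subseteq F(R)$.

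The main technical point, and the step I would treat most carefully, is the claim that post-composing with the fixed map $R^r$ preserves normality. The subtlety is that on the sphere the limit $g$ may equal the constant $\infty$ or may attain the value $\infty$; working with the spherical metric, in which $R^r$ is uniformly continuous by compactness of $\mathbb{C}_\infty$, rather than with the Euclidean metric, makes the continuity estimate uniform and handles these cases cleanly.
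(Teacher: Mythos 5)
The paper states this theorem without proof (it is one of the background results quoted from Beardon), and your argument is the standard one found there: the easy inclusion via subfamilies, and the reverse via the decomposition $R^k=R^r\circ (R^n)^m$ together with the facts that post-composition by a fixed rational map preserves normality and that a finite union of normal families is normal. Your proof is correct, including the care taken with the spherical metric to handle limits attaining the value $\infty$; the only cosmetic point is that for $k<n$ the decomposition has $m=0$, but adjoining finitely many maps to a family does not affect normality.
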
 

\begin{defn}
	A subset $E$ of $\mathbb{C}_{\infty}$ is said to be \emph{forward invariant} if $R(E)=E$, \emph{backward invariant} if $R^{-1}(E)=E$ and \emph{completely invariant} if $E$ is both forward and backward invariant. 
\end{defn}

\begin{theorem}\label{invariant}
	The Fatou set and the Julia set are both completely invariant.
\end{theorem}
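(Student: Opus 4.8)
The plan is to reduce the entire statement to the single local equivalence
$$ z\in F(R)\iff R(z)\in F(R). $$
This is exactly the assertion $R^{-1}(F)=F$, i.e.\ backward invariance of the Fatou set; and since a non-constant rational map is an open surjection of $\mathbb{C}_{\infty}$, forward invariance follows at once from $R(F)=R\bigl(R^{-1}(F)\bigr)=F$. The statements for $J=\mathbb{C}_{\infty}\setminus F$ are then free: backward invariance because preimages commute with complements, $R^{-1}(J)=\mathbb{C}_{\infty}\setminus R^{-1}(F)=J$, and forward invariance again by surjectivity, $R(J)=R\bigl(R^{-1}(J)\bigr)=J$. The engine for the local equivalence is the pair of identities $R^{n+1}=R^{n}\circ R=R\circ R^{n}$ combined with the elementary fact that normality is preserved under pre-composition with a fixed holomorphic map: if $\{g_{n}\}$ is normal on a domain $D$ and $\phi\colon D'\to D$ is holomorphic, then $\{g_{n}\circ\phi\}$ is normal on $D'$, as one sees by composing a locally uniformly convergent subsequence $g_{n_{k}}\to g$ with $\phi$.

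One direction is immediate. If $R(z)\in F$, pick a neighborhood $D\subseteq F$ of $R(z)$ on which $\{R^{n}\}$ is normal; the composition lemma with $\phi=R$ shows $\{R^{n}\circ R\}=\{R^{n+1}\}$ is normal on the neighborhood $R^{-1}(D)$ of $z$, and adjoining the single map $R$ cannot destroy normality, so $\{R^{n}\}$ is normal near $z$ and $z\in F$. Taking $D=F$ this already yields $R^{-1}(F)\subseteq F$, and applying $R$ together with surjectivity gives the free inclusion $F\subseteq R(F)$. The remaining and genuinely harder implication is $z\in F\Rightarrow R(z)\in F$, equivalently $R(F)\subseteq F$: one now wants to transfer normality \emph{forward} along $R$, but $R$ is not injective, so there is no global inverse available to pre-compose with.

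Away from the finitely many critical points this is still routine. If $z\in F$ is not a critical point, then $R$ has a local holomorphic inverse branch $\psi$ defined near $w=R(z)$ and taking values in a neighborhood $U\subseteq F$ of $z$; since $R^{n+1}\circ\psi=R^{n}\circ(R\circ\psi)=R^{n}$ near $w$, the composition lemma applied to $\psi$ makes $\{R^{n}\}$ normal near $w$, so $w\in F$. The one real obstacle is therefore the critical values: at a critical point $R$ behaves like $\zeta\mapsto\zeta^{k}$ and admits no single-valued inverse. Because critical points are isolated and finite in number, I would dispatch this by first running the branch argument on a punctured neighborhood $V\setminus\{w\}$, obtaining normality there, and then upgrading to normality at the isolated point $w$ itself through a Montel-type removability argument for $\mathbb{C}_{\infty}$-valued families. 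Once $R(F)\subseteq F$ is secured, it combines with the earlier inclusions to give $R^{-1}(F)=F=R(F)$, and the complementation remarks of the first paragraph complete the argument for $J$.
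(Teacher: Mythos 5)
The paper itself offers no proof of this statement --- it is quoted verbatim from Beardon's book, as the preamble to Section~2 announces --- so your argument has to stand on its own. Most of it does: the reduction of all four invariances to the single equivalence $z\in F\iff R(z)\in F$, the use of surjectivity to get $R(R^{-1}(F))=F$, the complementation step for $J$, the pre-composition lemma, the easy inclusion $R^{-1}(F)\subseteq F$, and the inverse-branch argument at non-critical points are all correct.

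The gap is the last step, and it sits exactly where the content of the theorem lives in your setup. There is no ``Montel-type removability argument'' that upgrades normality on a punctured neighborhood to normality at the puncture: the family $f_n(z)=nz$ is normal on $\{0<|z|<1\}$ (it converges locally uniformly to $\infty$ there as a family of $\mathbb{C}_{\infty}$-valued maps), yet it is normal in no neighborhood of $0$, since $f_n(0)=0$ for every $n$ while $f_n\to\infty$ off the origin. So the critical-value case is not dispatched, and your proof of $R(F)\subseteq F$ is incomplete precisely at the points you identified as the ``one real obstacle.'' The standard repair is to drop inverse branches entirely and use the fact that $R$ is an \emph{open} map: if $U\subseteq F$ is a neighborhood of $z$ on which $\{R^n\}$ is normal, then $R(U)$ is a neighborhood of $w=R(z)$; given a sequence of iterates, extract a subsequence with $R^{n_k+1}\to g$ locally uniformly on $U$, note that $g(u)$ depends only on $R(u)$ (since $R^{n_k+1}(u)=R^{n_k}(R(u))$), check that every compact $K\subseteq R(U)$ equals $R(K')$ for some compact $K'\subseteq U$, and conclude $\sup_{K}d(R^{n_k},h)=\sup_{K'}d(R^{n_k+1},g)\to 0$ for the induced limit $h$. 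Equivalently, Beardon argues with equicontinuity plus openness of $R$ on the compact sphere. Either route passes through critical values with no special treatment, which is why the textbook proofs never mention them.
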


\begin{theorem}
	Let $\Omega_0$ and $\Omega_1$ be components of $F(R)$ and suppose that $R$ maps $\Omega_0$ into $\Omega_1$. Then for some integer $m$, $R$ is an $m$-fold map of $\Omega_0$ onto $\Omega_1$.
\end{theorem}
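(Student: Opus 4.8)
The plan is to prove the statement in two stages: first that $R(\Omega_0) = \Omega_1$, and then that the restriction $R\colon \Omega_0 \to \Omega_1$ is a proper map, after which the conclusion follows from the standard fact that a non-constant proper holomorphic map between domains in $\ci$ is a branched covering of some finite degree $m$. The single dynamical input I would rely on throughout is the complete invariance of $J$ from Theorem \ref{invariant}, which forces $\partial \Omega_0 \subseteq J$. Indeed, a point $z \in \partial \Omega_0$ is a limit of points of $\Omega_0$ but is not interior to it; if $z$ lay in $F$ it would lie in some open component $\Omega'$, and every neighborhood of $z$ meets $\Omega_0$, so $\Omega' \cap \Omega_0 \neq \emptyset$, forcing $\Omega' = \Omega_0$ and hence $z \in \Omega_0$, a contradiction. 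Thus $z \in J$, and by invariance $R(z) \in J$ as well.

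For surjectivity, recall that a non-constant rational map is open, so $R(\Omega_0)$ is a nonempty open connected subset of $\Omega_1$; since $\Omega_1$ is connected it suffices to show $R(\Omega_0)$ is also closed in $\Omega_1$. Suppose $w \in \Omega_1$ is a limit of points $R(z_n)$ with $z_n \in \Omega_0$. As $\overline{\Omega_0}$ is a closed subset of the compact sphere, I may pass to a subsequence with $z_n \to z \in \overline{\Omega_0}$, and continuity gives $R(z) = w \in F$. If $z$ lay on $\partial \Omega_0$ it would lie in $J$, whence $R(z) \in J$, contradicting $R(z) \in \Omega_1 \subseteq F$; therefore $z \in \Omega_0$ and $w = R(z) \in R(\Omega_0)$. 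Hence $R(\Omega_0)$ is closed in $\Omega_1$, and so $R(\Omega_0) = \Omega_1$.

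Next I would verify that $R\colon \Omega_0 \to \Omega_1$ is proper. Given a compact set $K \subseteq \Omega_1$, the set $R^{-1}(K) \cap \overline{\Omega_0}$ is a closed subset of the compact sphere, hence compact. No point of $\partial \Omega_0$ can belong to it, since any such point maps into $J$ while $K \subseteq \Omega_1 \subseteq F$; consequently $R^{-1}(K) \cap \overline{\Omega_0}$ is contained in $\Omega_0$ and equals $R^{-1}(K) \cap \Omega_0$, which is therefore compact. This is exactly properness of the restricted map.

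Finally, a non-constant proper holomorphic map $R\colon \Omega_0 \to \Omega_1$ between domains is a finite branched covering: the number of solutions of $R(z) = w$ in $\Omega_0$, counted with multiplicity, is finite (bounded by $\deg R$, since all preimages on the whole sphere number exactly $\deg R$ with multiplicity) and independent of $w \in \Omega_1$. I expect the main obstacle to be this independence, which I would establish by showing the fiber count is locally constant. Away from the finitely many critical values the map is an honest covering, so the count is locally constant there; near a critical value one applies the argument principle, or equivalently the local normal form $z \mapsto z^k$, to see that the multiplicity-weighted count is unchanged. Since $\Omega_1$ is connected, this locally constant integer is a single value $m$, and $R$ is an $m$-fold map of $\Omega_0$ onto $\Omega_1$, as claimed.
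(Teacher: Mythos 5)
Your proof is correct: the reduction to (i) surjectivity via openness plus relative closedness of $R(\Omega_0)$ in $\Omega_1$, using $\partial\Omega_0\subseteq J$ and the complete invariance of $J$ from Theorem \ref{invariant}, and (ii) properness of the restriction followed by local constancy of the multiplicity-weighted fiber count, is exactly the standard argument (it is essentially Beardon's). Note that the paper itself states this theorem without proof, citing \cite{Beardon}, so there is no in-paper argument to compare against; your write-up fills that gap correctly.
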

\begin{defn}
	A component $\Omega$ of $F(R)$ is:
	\begin{enumerate}
		\item \emph{periodic} if for some positive integer $n$, $R^n(\Omega)=\Omega$;
		\item \emph{eventually periodic} if for some positive integer $m$, $R^m(\Omega)$ is periodic; and
		
	\end{enumerate}
\end{defn}
\begin{theorem}[Sullivan's No Wandering Domains Theorem]
	\label{Sullivan}
	Every component of the Fatou set of a rational map is eventually periodic. 
\end{theorem}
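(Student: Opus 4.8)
The plan is to prove the statement by contradiction, following Sullivan's quasiconformal deformation method. Suppose $R$, of degree $d \ge 2$, has a Fatou component $\Omega$ that is not eventually periodic. Then no two of its forward images coincide, for if $R^m(\Omega) = R^n(\Omega)$ with $m < n$ then $R^m(\Omega)$ would be periodic; since distinct Fatou components are disjoint, the orbit $\Omega, R(\Omega), R^2(\Omega), \dots$ consists of pairwise disjoint open sets, whose spherical areas sum to at most the area of $\mathbb{C}_{\infty}$. The idea is to exploit the freedom to choose a measurable conformal structure on $\Omega$ so as to build an infinite-dimensional family of quasiconformal deformations of $R$, each again rational of degree $d$, and then to contradict the finite-dimensionality of the space of degree-$d$ rational maps.

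First I would produce an $R$-invariant conformal structure supported on the grand orbit of $\Omega$. A preliminary observation, using that the disjoint forward orbit can contain only finitely many of the $2d-2$ critical points of $R$, is that for some $N$ the restriction of $R$ to $R^N(\Omega)$ is a biholomorphism onto each successive image. Fixing a Beltrami coefficient $\mu_0$ supported in $R^N(\Omega)$ with $\|\mu_0\|_\infty < 1$, I transport it around the grand orbit --- forward along these now-injective images and backward under all branches of the iterated inverse --- and set $\mu = 0$ off the grand orbit. Disjointness makes this well defined, the invariance relation $R^*\mu = \mu$ holds by construction, and because holomorphic transport preserves the dilatation we have $\|\mu\|_\infty = \|\mu_0\|_\infty < 1$.

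Next I would invoke the Measurable Riemann Mapping Theorem of Ahlfors and Bers to obtain a quasiconformal homeomorphism $\phi$ of $\mathbb{C}_{\infty}$ whose Beltrami coefficient is $\mu$. Invariance of $\mu$ forces $R_\mu := \phi \circ R \circ \phi^{-1}$ to be holomorphic for the standard structure, hence a rational map, and since quasiconformal conjugacy preserves the topological degree, $\deg R_\mu = d$. Replacing $\mu_0$ by a combination $\lambda_1\nu_1 + \cdots + \lambda_n\nu_n$ of linearly independent Beltrami coefficients $\nu_i$ on $R^N(\Omega)$ then produces a holomorphic map $\lambda \mapsto R_\mu$ from a polydisk in $\mathbb{C}^n$ into the space of degree-$d$ rational maps, a complex manifold of dimension $2d+1$.

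The final step, and the crux of the argument, combines a dimension count with a rigidity statement. Since the target, even modulo the three-dimensional group of Möbius conjugacies, has dimension bounded independently of $n$, for $n$ large two distinct parameters must yield maps $R_\mu$ and $R_{\mu'}$ that are Möbius conjugate. The hard part is to show that this is impossible unless $\mu = \mu'$: one must prove that any such conjugacy, composed with $\phi$ and $\phi'$, gives a quasiconformal self-map of $\mathbb{C}_{\infty}$ commuting with $R$ that is conformal off the grand orbit of $\Omega$, and then invoke rigidity of the dynamics on the Julia set to conclude that the two structures already agreed on $\Omega$. This injectivity modulo conjugacy --- equivalently, the claim that a wandering domain would force infinitely many independent deformation directions on a map admitting only finitely many --- is where the genuine difficulty lies; the invariance of $\mu$ and the area and degree bookkeeping above are comparatively routine once the quasiconformal machinery is in place.
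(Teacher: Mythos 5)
The paper does not actually prove this theorem: it is quoted as a classical background result (Sullivan's theorem, taken from Beardon), so there is no in-paper argument to compare yours against. Judged on its own terms, your proposal correctly identifies Sullivan's strategy --- transport a Beltrami coefficient around the grand orbit of a putative wandering domain, integrate it via the Measurable Riemann Mapping Theorem, and contradict the finite dimensionality of the space of degree-$d$ rational maps --- and the bookkeeping in your first three paragraphs (pairwise disjointness of the forward images, eventual injectivity of $R$ on the orbit because only finitely many of the $2d-2$ critical points can meet it, the invariance relation and the bound $\|\mu\|_\infty<1$, holomorphy of $\lambda\mapsto R_\lambda$) is essentially right.

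But the proof does not close, and you say so yourself: the entire content of the theorem is concentrated in the injectivity step that you defer. The dimension count by itself proves nothing, because a priori \emph{every} $R_\mu$ could be M\"obius conjugate to $R$ --- the derivative of $\lambda\mapsto R_\lambda$ could vanish identically --- in which case no contradiction arises from mapping a high-dimensional polydisk into a $(2d+1)$-dimensional target. What must actually be shown is that an infinite-dimensional space of Beltrami coefficients supported on the wandering domain injects, modulo the equivalence generated by the dynamics and by M\"obius conjugation, into genuinely distinct deformations. That requires analyzing when two invariant structures yield conjugate maps: one passes to the quotient Riemann surface of the grand orbit of $\Omega$, splits into cases according to its topological type (a case division your sketch omits entirely), and uses the Teichm\"uller theory of that quotient together with rigidity on the Julia set to rule out an infinite-dimensional kernel. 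None of this is routine, and without it the argument is a plan rather than a proof: as a proposal it is an accurate map of where the difficulty lies, but the gap sits exactly at the step that makes the theorem true.
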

Therefore if we want to understand the Fatou set of a rational map, we only need to consider components that are forward invariant under some iterate $R^n$ since all other components of the Fatou set must be preimages of these.
\begin{theorem}[Classification of Fatou Components]\label{Classification}
	For a rational  map $R$, a forward invariant component $\Omega$ of the Fatou set, $F(R)$ must be one of the following types:
	\begin{enumerate}
		\item an attracting component if it contains an attracting fixed point of R;
		\item a super-attracting component if it contains a super-attracting fixed point of R;
		\item a parabolic component if there is a rationally indifferent fixed point $\zeta$ on the boundary $\partial \Omega$, and if $R^n \rightarrow \zeta$ on $\Omega$; 
		\item a Siegel Disc if $R:\Omega\rightarrow \Omega$ is analytically conjugate to a Euclidean rotation of the unit disk onto itself;
		\item or a Herman ring if $R:\Omega\rightarrow \Omega$ is analytically conjugate to a Euclidean rotation of some annulus onto itself.
	\end{enumerate}
	
\end{theorem}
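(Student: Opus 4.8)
The plan is to analyze the limiting behaviour of the iterates $\{R^n\}$ on $\Omega$ and to sort the outcomes according to whether those limits are constant. Since $\Omega$ is forward invariant I have $R(\Omega)=\Omega$ by hypothesis, so the iterates $R^n$ genuinely restrict to self-maps of $\Omega$, and (more generally, for a component that is only periodic, the relation $F(R^p)=F(R)$ lets me replace $R$ by an iterate fixing $\Omega$). Because $\Omega\subseteq F(R)$, the family $\{R^n|_\Omega\}$ is normal, so every subsequence has a locally uniformly convergent sub-subsequence; let $\mathcal L$ be the set of all such limit functions, each mapping $\Omega$ into $\overline\Omega$. The first structural fact I would establish is the \emph{dichotomy}: either every $g\in\mathcal L$ is constant, or every $g\in\mathcal L$ is non-constant (hence an open map). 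The mechanism is that a non-constant limit is open and, by forward invariance, has image in $\overline\Omega$, while composing limit functions and applying Hurwitz's theorem shows that a constant limit and a non-constant limit cannot coexist.

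In the constant regime I would first show all the limits coincide. If $R^{n_j}\to\zeta$, then letting $j\to\infty$ in $R^{n_j+1}=R\circ R^{n_j}$ forces $R(\zeta)=\zeta$, and a connectedness argument on $\Omega$ upgrades this to $R^n\to\zeta$ locally uniformly. Two cases remain. If $\zeta\in\Omega$, then $\zeta$ is a fixed point inside the Fatou set toward which all nearby orbits converge, so it is neither repelling nor indifferent, and its multiplier satisfies $|R'(\zeta)|<1$; I would then split into $R'(\zeta)=0$ (super-attracting, linearized by Böttcher's theorem, case (2)) and $0<|R'(\zeta)|<1$ (attracting, linearized by Koenigs' theorem, case (1)). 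If instead $\zeta\in\partial\Omega$, I would invoke the Snail Lemma to rule out that such a boundary fixed point is attracting, repelling, or irrationally indifferent, leaving $R'(\zeta)$ a root of unity; this is precisely the parabolic case (3), with the Leau--Fatou flower describing the approach $R^n\to\zeta$.

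In the non-constant regime I would argue that $R|_\Omega$ is a conformal automorphism of $\Omega$. Injectivity follows from Hurwitz applied to a non-constant limit $g=\lim R^{n_j}$ together with the exclusion of a critical point whose entire forward orbit remains in $\Omega$; surjectivity onto $\Omega$ follows because such a $g$ is open with image in $\Omega$, and one can extract a further subsequence with $R^{m_k}\to\mathrm{id}_\Omega$, exhibiting recurrence. Since $\Omega$ omits the infinite set $J(R)$ it is hyperbolic, so by the uniformization theorem $\Omega$ is conformally a disk $\mathbb{D}$, a punctured disk, or an annulus. Classifying the automorphisms of each model and retaining only those with recurrent (elliptic) dynamics leaves the disk, on which $R$ is conjugate to an irrational rotation (a Siegel disc, case (4)), and the annulus, on which $R$ is conjugate to a rotation (a Herman ring, case (5)); irrationality is forced because a rational rotation would make $R^k$ the identity on an open set, hence everywhere, contradicting $\deg R\ge 2$, and the punctured-disk model is excluded since its puncture would be an isolated boundary fixed point of the wrong type.

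The main obstacle I expect is the non-constant case: showing that $R|_\Omega$ is genuinely an automorphism, and then transporting the dynamics through uniformization to pin down the conformal type and to prove the induced rotation is \emph{irrational}. Establishing the constant/non-constant dichotomy cleanly and justifying the Snail Lemma in the parabolic case are the remaining delicate points, whereas the attracting and super-attracting cases reduce to the classical Koenigs and Böttcher linearizations.
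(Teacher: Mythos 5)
Your proposal is correct and follows essentially the same route the paper indicates: the paper does not prove Theorem \ref{Classification} itself but cites Beardon and sketches exactly your strategy via the subsequent lemmas on the limit-function class $\mathscr{F}(\Omega)$ --- the constant versus non-constant dichotomy, with the constant case yielding (super)attracting or parabolic components and the non-constant case yielding Siegel discs or Herman rings. Your filling-in of the details (Koenigs/B\"ottcher linearization, the Snail Lemma at a boundary fixed point, and uniformization plus recurrence to pin down the rotation domains) is the standard argument and is sound.
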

	 In this paper we are particularly interested in ($5$). The proof of Thm \ref{Classification} depends on the possible subsequential limit functions on a forward invariant component of $F(R)$.
\begin{defn}
		A function $\psi$ is a limit function on a component $\Omega$ of $F(R)$ if there is some subsequence of $(R^n)$ which converges locally uniformly to $\psi$ on $\Omega$. The class of limit functions on $\Omega$ will be denoted $\mathscr{F}(\Omega)$.
\end{defn}

\begin{lemma}
	If there exists a constant limit function in $\mathscr{F}(\Omega)$ with value $\zeta$, then $\zeta$ is a fixed point of $R^n$ for some $n \geq 1$. 
\end{lemma}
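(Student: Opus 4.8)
The plan is to turn the hypothesis $R^{n_k}\to\zeta$ (locally uniformly on $\Omega$, for some $n_1<n_2<\cdots$) into an honest fixed-point equation $R^{p}(\zeta)=\zeta$ by comparing the given subsequence with a nearby iterate and passing to the limit, using that $R$, and hence every $R^{p}$, is continuous on the compact sphere $\ci$. First I would record the elementary fact that the whole forward orbit of $\zeta$ consists of constant limit values: for each fixed $p\ge 0$ we have $R^{n_k+p}=R^{p}\circ R^{n_k}$, and since $R^{n_k}\to\zeta$ locally uniformly and $R^{p}$ is uniformly continuous, $R^{n_k+p}\to R^{p}(\zeta)$ locally uniformly on $\Omega$. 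This lets me move freely along the orbit $\zeta,R(\zeta),R^2(\zeta),\dots$.

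The main engine is a \emph{gap argument}. Write $m_k=n_{k+1}-n_k\ge 1$ for the successive gaps. Suppose some value $p$ occurs infinitely often among the $m_k$ (this is automatic whenever the gaps are bounded). Passing to that sub-subsequence, we have simultaneously $R^{n_k}\to\zeta$ and $R^{n_k+p}=R^{n_{k+1}}\to\zeta$; on the other hand, fixing $z\in\Omega$ and using continuity of $R^{p}$ at $\zeta$,
\[
R^{n_k+p}(z)=R^{p}\bigl(R^{n_k}(z)\bigr)\longrightarrow R^{p}(\zeta).
\]
Comparing the two limits gives $R^{p}(\zeta)=\zeta$, so $\zeta$ is a fixed point of $R^{p}$ and we are done. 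This already settles the cases one meets in practice, such as parabolic convergence along the full sequence $n_k=k$, where every gap equals $1$.

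It remains to handle the situation in which the gaps $m_k$ are unbounded, and here I would split on where $\zeta$ lies. If $\zeta\in F(R)$, choose a closed disk $\bar D$ (with open interior $D$) about $\zeta$ inside its Fatou component; local uniform convergence forces $R^{n_k}(\bar D)\subset D$ for all large $k$, so by the Schwarz contraction principle each such $R^{n_k}$ has a unique fixed point $w_k\in D$, necessarily attracting, and $w_k\to\zeta$. Since a rational map has only finitely many attracting cycles, the points $w_k$ take only finitely many values, and $w_k\to\zeta$ then forces $\zeta=w_k$ for large $k$; thus $\zeta$ is a periodic point, fixed by $R^{n_k}$.

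The \emph{genuine obstacle} is the last case, $\zeta\in J(R)$ with unbounded gaps, because the clean continuity trick breaks down: the family $\{R^{m_k}\}$ is normal on $\Omega$ but not at the boundary value $\zeta$, so I cannot simply evaluate a limit of $R^{m_k}$ at $\zeta$. My plan here is to extract, by normality on $\Omega$, a limit function $\phi=\lim_j R^{m_{k_j}}\in\mathscr{F}(\Omega)$ and exploit the commutation $R^{n_{k+1}}=R^{m_k}\circ R^{n_k}$ together with $R^{n_k}\to\zeta$; the difficulty is transporting this information out to the Julia value $\zeta$, which ultimately rests on the local structure at an indifferent boundary point (the flower/snail lemma). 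Alternatively, and more in keeping with the classification, one argues that a constant limit value lying in $J$ can occur only in the parabolic regime, where convergence takes place along the full sequence of iterates so that the gaps are bounded and the argument of the second paragraph applies.
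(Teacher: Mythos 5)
Your first two cases are essentially the standard argument and are fine: the repeated-gap computation correctly yields $R^p(\zeta)=\zeta$, and the contraction argument for $\zeta\in F(R)$ works, though you should note that the disk $\bar D$ must sit inside $\Omega$ itself (you only control $R^{n_k}$ on $\Omega$, not on whichever component happens to contain $\zeta$; when $\Omega$ is forward invariant this is automatic since $\zeta\in \overline{\Omega}\cap F=\Omega$, but you never invoke that hypothesis). The genuine gap is the third case, which you acknowledge and do not close, and it is \emph{not} vacuous: in a parabolic basin $R^n\to\zeta\in J$ along the full sequence, so the subsequence handed to you in the hypothesis could perfectly well be $n_k=k^2$, with unbounded, non-repeating gaps and $\zeta\in J$. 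Neither of your proposed exits works. Normality of $\{R^{m_k}\}$ on $\Omega$ produces a limit $\phi$ that you can only evaluate on compact subsets of $\Omega$, while the points $R^{n_k}(z)$ leave every such compact set as they approach the boundary value $\zeta$, so the identity $R^{n_{k+1}}=R^{m_k}\circ R^{n_k}$ cannot be passed to the limit. And the claim that a constant limit value in $J$ ``can occur only in the parabolic regime'' is a consequence of the classification theorem (Thm \ref{Classification}) that this lemma is used to establish, so invoking it is circular. (The paper itself gives no proof --- the lemma is quoted from Beardon --- so you are being measured against the standard argument.)

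The standard way to close all three cases at once is quite different from your case split. By Thm \ref{Sullivan} you may replace $\Omega$ by the periodic component it eventually maps onto and $R$ by the iterate $S=R^q$ fixing that component, pushing the convergence forward so that $S^{s_k}\to\zeta$ locally uniformly on a forward invariant $\Omega$. If some limit function of $(S^n)$ on $\Omega$ were non-constant, one shows the identity lies in $\mathscr{F}(\Omega)$ and hence every limit function is injective, contradicting the existence of the constant limit $\zeta$; therefore \emph{every} limit function is constant, which forces $\mathrm{diam}\, S^n(K)\to 0$ for each compact $K\subset\Omega$ (otherwise a convergent subsequence would have a non-constant limit). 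Choosing $K$ to contain both $z_0$ and $S(z_0)$ gives $d\bigl(S^n(z_0),S^{n+1}(z_0)\bigr)\le \mathrm{diam}\, S^n(K)\to 0$, and then any subsequential limit $\alpha$ of the orbit satisfies $S(\alpha)=\lim S^{s_k+1}(z_0)=\lim S^{s_k}(z_0)=\alpha$. In particular $\zeta$ is fixed by $R^q$. This bypasses entirely the question of whether $\zeta$ lies in $F$ or $J$ and whether the gaps are bounded, which is where your proposal stalls.
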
 
\begin{theorem}
	Suppose that $\Omega$ is a forward invariant component of $F(R)$ and every function in $\mathscr{F}(\Omega)$ is constant. Then $\mathscr{F}(\Omega)$ contains exactly one function, say with value $\zeta$, where $R(\zeta)=\zeta$ and $R^n\rightarrow \zeta$ locally uniformly on $\Omega$.
\end{theorem}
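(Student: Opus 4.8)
The plan is to avoid any analysis of the multiplier of the limit value and instead exploit the forward invariance $R(\Omega)=\Omega$ together with a ``collapsing'' property of the iterates; throughout, $d$ denotes the spherical metric on $\mathbb{C}_\infty$. First I would record the collapsing property: for every compact $K\subset\Omega$ one has $\operatorname{diam}R^n(K)\to 0$. This follows from normality of $\{R^n\}$ on $\Omega\subset F(R)$ together with the hypothesis, for if it failed there would be a compact $K$, an $\varepsilon>0$, and a subsequence with $\operatorname{diam}R^{n_j}(K)\ge\varepsilon$; a further subsequence converges locally uniformly to a limit function, which is constant by hypothesis, forcing $\operatorname{diam}R^{n_{j_i}}(K)\to 0$, a contradiction. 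The same normality shows $\mathscr F(\Omega)\neq\varnothing$, since $\Omega$ is a nonempty open subset of $F(R)$.

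Next I would fix $z_0\in\Omega$. By forward invariance the orbit $x_n:=R^n(z_0)$ stays in $\Omega$ and $R(z_0)\in\Omega$, so, $\Omega$ being open and connected hence path connected, I join $z_0$ to $R(z_0)$ by a compact arc $K\subset\Omega$. Then $R^n(K)$ is a connected compact set containing both $x_n$ and $x_{n+1}=R^n(R(z_0))$, so $d(x_n,x_{n+1})\le\operatorname{diam}R^n(K)\to 0$ by the collapsing property. The key structural consequence is that the $\omega$-limit set $V:=\omega(z_0)$ is \emph{connected}: if it split into two compact pieces at positive distance, the eventually short-stepped orbit would be forced to cross the gap between them infinitely often, producing an $\omega$-limit point in the gap, a contradiction. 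I also note that $V$ coincides with the set of all constant limit values of $\{R^n\}$ on $\Omega$, since by normality any accumulation point of the orbit equals the constant value of a locally uniform limit function, and conversely each such value is an accumulation point of the orbit.

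The punchline is then a cardinality argument. By the preceding Lemma every point of $V$ is a periodic point of $R$, but the set of all periodic points is countable, being the countable union over $p$ of the finite sets $\operatorname{Fix}(R^p)$ (each $R^p$ is rational of degree $(\deg R)^p\ge 2$, hence not the identity and with only finitely many fixed points). Since a connected subset of $\mathbb{C}_\infty$ with more than one point is uncountable, $V=\{\zeta\}$ is a single point. Finally $V=\omega(z_0)=\{\zeta\}$ gives $R^n(z_0)\to\zeta$, and the estimate $\sup_K|R^n-\zeta|\le d(R^n(z_0),\zeta)+\operatorname{diam}R^n(K\cup\{z_0\})\to 0$ upgrades this to locally uniform convergence on $\Omega$; continuity of $R$ then yields $R(\zeta)=\lim R^{n+1}(z_0)=\zeta$, and since the full sequence converges to the constant $\zeta$, we get $\mathscr F(\Omega)=\{\zeta\}$ exactly.

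I expect the main obstacle to be the connectedness step: recognizing that forward invariance of the \emph{component}, not merely periodicity of the limit value, forces consecutive orbit points together via the collapsing property, and then proving the elementary but fiddly lemma that an $\omega$-limit set of an orbit with $d(x_n,x_{n+1})\to 0$ in a compact space is connected. Once connectedness is in hand, the clash with the countability of the periodic points closes the argument cleanly and sidesteps the usual case analysis of whether $\zeta$ is attracting, parabolic, or indifferent.
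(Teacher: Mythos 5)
The paper does not prove this statement; it is quoted from Beardon's \emph{Iteration of Rational Functions} as background. Your argument is correct and complete, and it is essentially the classical proof given there: equicontinuity plus the all-limits-constant hypothesis collapses the diameter of $R^n(K)$ for compact $K\subset\Omega$, forward invariance lets you run an arc from $z_0$ to $R(z_0)$ inside $\Omega$ so that $d(R^n(z_0),R^{n+1}(z_0))\to 0$, hence the cluster set of the orbit is a compact connected set consisting (by the preceding Lemma) of periodic points, and connectedness versus countability forces it to be a single fixed point $\zeta$, after which the diameter estimate upgrades pointwise to locally uniform convergence. No gaps; the only cosmetic remark is that $\sup_K|R^n-\zeta|$ should be read as the spherical distance, consistent with the rest of your argument.
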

Moreover, if $\Omega$ is forward invariant and every limit function is constant, $\Omega$ is a (super)attracting or parabolic component.  
\begin{theorem}
	Suppose that $\Omega$ is a forward invariant component of $F(R)$, where deg$(R)\geq 2$, and that $\mathscr{F}(\Omega)$ contains non-constant functions. Then $\Omega$ is either a Siegel disc or a Herman ring. And the two cases can be distinguished by the connectivity of $\Omega$
\end{theorem}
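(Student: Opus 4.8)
The plan is to show that the hypothesis forces $R$ to restrict to a conformal automorphism of $\Omega$ which, together with its inverse iterates, generates a compact group of rotations; the connectivity of $\Omega$ then decides between the two cases. First I would produce the identity map as a limit function. Fix a non-constant $\psi \in \mathscr{F}(\Omega)$, say $R^{n_k} \to \psi$ locally uniformly, and pass to a subsequence for which the gaps $m_k := n_{k+1} - n_k$ tend to infinity and, by normality of $\{R^n\}$ on $\Omega$, $R^{m_k} \to \varphi$ locally uniformly for some $\varphi \in \mathscr{F}(\Omega)$. Writing $R^{n_{k+1}} = R^{m_k} \circ R^{n_k}$ and letting $k \to \infty$ gives $\varphi \circ \psi = \psi$ on $\Omega$. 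Since $\psi$ is non-constant, $\psi(\Omega)$ is open by the open mapping theorem, and $\varphi$ is the identity on this open set; by the identity theorem $\varphi = \mathrm{id}_\Omega$. Hence $\mathrm{id}_\Omega \in \mathscr{F}(\Omega)$, realized as $R^{m_k} \to \mathrm{id}_\Omega$ with $m_k \to \infty$.

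Next I would upgrade this to the statement that $R \colon \Omega \to \Omega$ is a conformal bijection. By the component-mapping theorem quoted above, $R$ maps $\Omega$ onto $\Omega$ as an $m$-fold map for some $m \geq 1$, so surjectivity is automatic and it remains to prove injectivity. If $a, b \in \Omega$ satisfy $R(a) = R(b)$, then $R^{m_k}(a) = R^{m_k}(b)$ for every $k$, and passing to the limit yields $a = \mathrm{id}_\Omega(a) = \lim_k R^{m_k}(a) = \lim_k R^{m_k}(b) = b$. Thus $R$ is injective on $\Omega$, and therefore a conformal automorphism. Since $\deg R \geq 2$ forces $J(R)$ to be infinite, $\Omega$ omits more than two points of $\ci$ and so is a hyperbolic Riemann surface on which $R$ acts as an isometry of the Poincar\'e metric.

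It then remains to see that such an automorphism is conjugate to a rotation and to read off the two cases from connectivity. Because $R$ is now invertible and $R^{m_k} \to \mathrm{id}_\Omega$ with $m_k \to \infty$, the closure $G$ of $\{R^n : n \in \Z\}$ in the topology of local uniform convergence is a compact abelian subgroup of $\mathrm{Aut}(\Omega)$. Since $\deg R \geq 2$ forbids any relation $R^q = \mathrm{id}_\Omega$, which would extend to all of $\ci$ by the identity theorem, $\{R^n\}$ is infinite, so $G$ is a positive-dimensional compact group and hence contains a circle group acting holomorphically and effectively on $\Omega$. Invoking the uniformization of $\Omega$ together with the classification of hyperbolic plane domains admitting an effective holomorphic $S^1$-action, the orbits foliate $\Omega$ by conformal circles, and $\Omega$ is conformally a disk when it is simply connected or a round annulus when it is doubly connected, with $R$ acting as a rotation in each case. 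This exhibits $\Omega$ as a Siegel disc or a Herman ring respectively, the two being distinguished exactly by the connectivity of $\Omega$.

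The main obstacle I anticipate is this last classification step: one must rule out the degenerate and higher-connectivity possibilities for a hyperbolic domain carrying an $S^1$-action, in particular the punctured disk and punctured plane, whose automorphism groups also consist of rotations, and confirm that only the simply and doubly connected cases survive. This is where the argument relies most heavily on uniformization and the explicit description of the automorphism groups of the disk and of annuli, together with the fact that an isolated boundary point of $\Omega$ cannot belong to the perfect set $J(R)$. The earlier steps are essentially formal once the identity is known to be a limit function.
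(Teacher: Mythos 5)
The paper itself offers no proof of this statement: it is one of the background results quoted from Beardon's book, which the paper explicitly states without proof, so there is no in-paper argument to compare against. Your proposal is the standard textbook proof (the one in Milnor and Carleson--Gamelin, and in substance Beardon's as well): extract the identity as a limit function, conclude that $R$ restricts to a conformal automorphism of the hyperbolic domain $\Omega$ whose iterates have compact closure in $\mathrm{Aut}(\Omega)$, and then invoke the classification of hyperbolic plane domains with non-discrete automorphism group, discarding the punctured disk because its puncture would be an isolated point of the perfect set $J(R)$. The architecture is correct and the connectivity dichotomy falls out exactly as you say. Two points should be tightened. First, in deriving $\varphi\circ\psi=\psi$ you evaluate $\varphi$ at $\psi(z)$, but a priori $\psi(\Omega)$ lies only in $\mathit{Cl}(\Omega)$ and $\varphi$ is defined only on $\Omega$; the standard fix is that $\psi(\Omega)$ is open while $\partial\Omega\subset J(R)$ has empty interior, so $\psi(\Omega)\cap\Omega$ is a nonempty open set on which the composition is legitimate, and the identity theorem then gives $\varphi=\mathrm{id}_\Omega$. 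Second, the compactness of the closure $G$ of $\{R^n|_\Omega:n\in\Z\}$ is asserted rather than argued: one must rule out constant subsequential limits (for instance because each $R^n|_\Omega$ is an isometry of the Poincar\'e metric of $\Omega$ and so cannot collapse distances, given that the identity is also a limit) before concluding that every limit is again an automorphism and that $G$ is a compact subgroup of the Lie group $\mathrm{Aut}(\Omega)$. Both repairs are routine, so I would classify these as expository gaps in an otherwise correct and standard argument rather than genuine errors.
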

We can also distinguish the two cases by the presence of a fixed point inside $\Omega$.
\subsubsection{Critical Points}
A point $c\in \mathbb{C}_{\infty}$ is a \emph{critical point} for the map $R$ if $R'(c)=0$ or if the multiplier at $\infty$ is $0$. The \emph{valency}, $v_f(z_0)$ or \emph{order} of a function $f$ at a point $z_0$, is the number $k$ such that $\lim\limits_{z\rightarrow z_0}\frac{f(z)-f(z_0)}{(z-z_0)^k}$ exists, is finite and non-zero. Note that $k$ is also the number of solutions of $f(z)=f(z_0)$ at $z_0$.
\begin{theorem}[Riemann-Hurwitz relation]\label{R-H}
	For any non-constant rational map $R$,
	$$\sum_{z\in \mathbb{C}_{\infty}} (v_R(z)-1)=2\deg (R)-2.$$
\end{theorem}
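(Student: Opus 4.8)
The plan is to read $R$ as a degree $d=\deg R$ branched covering of the sphere onto itself and to compute the Euler characteristic of the domain sphere in two ways. The algebraic input I would isolate first is the local picture: at each point $z_0$ the map has a local normal form $z\mapsto z^{v_R(z_0)}$ in suitable coordinates, and for every value $w\in\mathbb{C}_\infty$ the fiber satisfies
\[
\sum_{z\in R^{-1}(w)} v_R(z) = d,
\]
since solving $R(z)=w$ amounts to a degree $d$ polynomial equation $P(z)-wQ(z)=0$ whose root multiplicities are exactly the valencies. In particular the critical points, where $v_R(z)\ge 2$, are finite in number (they lie among the zeros of the Wronskian $P'Q-PQ'$, together with possibly $\infty$), so the set of critical values $\{w_1,\dots,w_k\}$ is finite, and away from the $w_j$ the map $R$ is an honest $d$-sheeted covering.

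Next I would set up a triangulation argument. Choose a triangulation $T$ of the target sphere with $V$ vertices, $E$ edges and $F$ faces, arranged so that every critical value $w_j$ is a vertex; then $V-E+F=2$. Pulling $T$ back through $R$ produces a triangulation $T'$ of the domain sphere. Over the complement of the critical values $R$ is a covering of degree $d$, so each open edge and each open face lifts to exactly $d$ disjoint cells, giving $E'=dE$ and $F'=dF$. A vertex $w$ that is not a critical value has exactly $d$ preimages, while a critical value $w_j$ has $|R^{-1}(w_j)| = d - \sum_{z\in R^{-1}(w_j)}(v_R(z)-1)$ preimages by the fiber identity above. Summing over all vertices, the total vertex count of $T'$ is
\[
V' = dV - \sum_{z\in\mathbb{C}_\infty}(v_R(z)-1),
\]
since $v_R(z)-1$ vanishes off the critical points, so the correction collapses to the full branching sum.

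Finally I would apply Euler's formula to $T'$ on the domain sphere:
\[
2 = V'-E'+F' = \Big(dV-\sum_z(v_R(z)-1)\Big) - dE + dF = d(V-E+F) - \sum_z(v_R(z)-1) = 2d - \sum_z(v_R(z)-1),
\]
and rearranging gives $\sum_z (v_R(z)-1) = 2\deg R - 2$, as claimed.

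The main obstacle is not the bookkeeping but the two structural facts that feed it: that $R$ restricted over $\mathbb{C}_\infty\setminus\{w_1,\dots,w_k\}$ is genuinely a covering map of degree $d$ (so that cells lift uniformly, which rests on path lifting and the local normal form), and the fiber-sum identity $\sum_{z\in R^{-1}(w)}v_R(z)=d$ at \emph{every} value, including $w=\infty$ and the poles of $R$. An alternative, more computational route would avoid triangulations altogether: count the zeros of $R'$ directly via the Wronskian $P'Q-PQ'$, whose degree is $2d-2$ in the generic normalization. There the hard part shifts to handling the valency contributions at $\infty$ and at the poles, which I would clear by pre- and post-composing with Möbius transformations to move those points to ordinary regular points before counting.
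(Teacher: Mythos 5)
Your proposal is correct. Note that the paper does not prove this statement at all: it is quoted as a standard result (Theorem \ref{R-H}) from Beardon's \emph{Iteration of Rational Functions}, so there is no in-paper argument to compare yours against. Your Euler-characteristic proof is the classical one and is complete in outline: the local normal form $z\mapsto z^{v_R(z_0)}$, the fiber identity $\sum_{z\in R^{-1}(w)}v_R(z)=d$ at every value (with the caveat, which you correctly flag, that at the exceptional $w$ where $\deg(P-wQ)$ drops the missing roots are accounted for by a preimage at $\infty$ of the right valency), and the pullback of a triangulation whose vertex set contains all critical values, giving $V'=dV-\sum_z(v_R(z)-1)$, $E'=dE$, $F'=dF$ and hence $2=2d-\sum_z(v_R(z)-1)$. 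Your alternative route via the Wronskian $P'Q-PQ'$ is in fact closer to the computation typically given in Beardon for this special case; it is more elementary (no covering-space or triangulation machinery) but, as you note, requires careful bookkeeping of valencies at the poles and at $\infty$, which the topological argument handles uniformly.
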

As $v_R(z)=1$ everywhere except critical points of $R$, this tells us that a rational map of degree $d$ has exactly $2d-2$ critical points counting multiplicity. Thus for our maps we will have at most $4$ distinct critical points since we are focusing on the degree $3$ case. Denote the set of critical points of $R$ by $C$ or $C(R)$ and $C^+=\bigcup\limits_{n=0}^{\infty}R^n (C)$ for the forward images. The \emph{postcritical} set,  $\mathit{Cl}({C^+})$,  is the closure of the forward images.
\begin{defn}
	The \emph{immediate basin} of a (super)attracting cycle $\{\zeta_1,...\zeta_q\}$ is the union of components of $F(R)$ containing some $\zeta_j$. 
\end{defn}

\begin{theorem}
	Let $\{\zeta_1,...\zeta_m\}$ be a rationally indifferent cycle for $R$ with multiplier $e^{2\pi i r/q}$ where $r$ and $q$ are coprime. Then there exists an integer $k$, and $mkq$ distinct components $\Omega_1,...,\Omega_{mkq}$ of $F(R)$ such that at each $\zeta_j$, there are exactly $kq$ of these components containing a petal of angle $2\pi/kq$ at $\zeta_j$. Further, $R$ acts as a permutation  $\tau$ on $\{\Omega_1,...\Omega_{mkq}\}$, where $\tau$ is a composition of $k$ disjoint cycles of length $mq$, and a petal based at $\zeta_j$ maps under $R$ to a petal based at $\zeta_{j+1}$ (see e.g., \cite{Beardon} for definitions and details.)
\end{theorem}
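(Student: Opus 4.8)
The plan is to deduce the entire statement from the Leau--Fatou flower theorem applied to the iterate $g = R^m$, for which each $\zeta_j$ is a parabolic fixed point with multiplier $\lambda = e^{2\pi i r/q}$. First I would pass from the cycle to a single fixed point: since the cycle has period $m$, the point $\zeta_j$ is fixed by $g = R^m$, and by the chain rule $g'(\zeta_j) = \lambda$ for every $j$, so all $m$ points have the same local type. Conjugating by a M\"obius map to place $\zeta_j$ at the origin, I would examine $g^q$, whose multiplier is $\lambda^q = 1$; thus $g^q$ is tangent to the identity and has a local expansion $g^q(z) = z + a z^{N+1} + \cdots$ with $a \neq 0$. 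The integer $k$ in the statement is defined by $N = kq$: the attracting directions of $g^q$ are the $N$ equally spaced roots determined by $a$, and this configuration must be invariant under the linearized rotation by $\lambda$, which forces $q \mid N$, say $N = kq$.

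Next I would invoke the flower theorem itself: at the origin $g^q$ has exactly $kq$ attracting petals $P_1, \dots, P_{kq}$, cyclically arranged and each subtending opening angle $2\pi/(kq)$, on which $g^{qn} \to \zeta_j$ locally uniformly; hence each petal lies in the Fatou set. I would then argue that distinct attracting petals lie in distinct Fatou components: consecutive attracting directions at $\zeta_j$ are separated by repelling directions, along which points are pushed away from $\zeta_j$ and the Julia set accumulates at $\zeta_j$, so no single Fatou component can contain two of the petals. By the classification in Theorem \ref{Classification} these are parabolic components. Carrying this out at each of the $m$ cycle points produces $mkq$ distinct parabolic components $\Omega_1, \dots, \Omega_{mkq}$, with exactly $kq$ of them meeting each $\zeta_j$, as claimed.

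It remains to identify the permutation $\tau$ induced by $R$ on these $mkq$ components. Here I would combine two observations. First, $R(\zeta_j) = \zeta_{j+1}$ (indices mod $m$), and since $R$ conjugates $R^m$ at $\zeta_j$ to $R^m$ at $\zeta_{j+1}$, it carries the flower at $\zeta_j$ onto the flower at $\zeta_{j+1}$; thus $R$ sends a petal based at $\zeta_j$ to one based at $\zeta_{j+1}$ and cyclically relates the $m$ blocks of $kq$ petals. Second, returning after $m$ steps, $g = R^m$ fixes $\zeta_j$ with linear part the rotation by $\lambda = e^{2\pi i r/q}$, which in the normalized coordinate shifts the petal labels by $rk$ modulo $kq$. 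A cyclic shift by $rk$ on $\mathbb{Z}/kq\mathbb{Z}$ decomposes into $\gcd(rk, kq) = k\gcd(r,q) = k$ cycles, each of length $kq/k = q$, using $\gcd(r,q)=1$. Splicing the period-$m$ block motion together with this within-block rotation, a petal first returns to itself only after it returns $q$ times to its own block, that is after $qm$ applications of $R$; hence each $\tau$-cycle has length $mq$, and there are $mkq / (mq) = k$ of them, giving the asserted decomposition of $\tau$ into $k$ disjoint $mq$-cycles.

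The main obstacle is the local-to-global step in the second paragraph --- rigorously showing the $kq$ attracting petals occupy genuinely distinct Fatou components rather than merging --- together with the precise verification that $R$ rotates the petal labels by exactly $rk$ positions; both rest on the attracting/repelling Fatou-coordinate normal form underlying the flower theorem. The combinatorial bookkeeping that produces $\tau$, by contrast, is routine modular arithmetic once the local picture is pinned down, and I would simply cite \cite{Beardon} for the analytic substance of the flower theorem.
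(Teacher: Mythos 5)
The paper does not prove this statement at all: it appears in the background section (Section 2), where the authors explicitly state that known results are quoted without proof, and the theorem is simply attributed to Beardon's \emph{Iteration of Rational Functions}. So there is no in-paper argument to compare against; your proposal should be judged against the standard proof, and it is essentially that proof. Your reduction to $g=R^m$, the divisibility argument $q\mid N$ forced by invariance of the attracting directions of $g^q$ under the linearization of $g$, and the combinatorics of the shift by $rk$ on $\mathbb{Z}/kq\mathbb{Z}$ (giving $\gcd(rk,kq)=k$ orbits of length $q$, hence $k$ cycles of length $mq$ under $\tau$) are all correct and match the classical treatment. The one place where your sketch is not quite the right mechanism is the step you yourself flag: arguing that adjacent petals lie in distinct Fatou components by pointing to Julia-set accumulation along the interlacing repelling directions does not by itself rule out a Fatou component that connects two petals by a path far from $\zeta_j$. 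The standard repair, which is what Beardon uses, is the absorbing property of petals: each attracting petal $P$ is forward invariant under $g^q$ and every compact subset of the Fatou component containing $P$ is eventually mapped into $P$; since the petals are pairwise disjoint and each is forward invariant, no component can contain two of them (and the same argument, via the constancy of the limit function, separates petals based at distinct cycle points, which your count of $mkq$ distinct components implicitly needs). With that substitution your outline is a complete and correct proof.
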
	

\begin{defn}
	An immediate basin of a rationally indifferent cycle is a cycle of components of $F(R)$ each containing a petal at some point in the cycle. 
\end{defn}

\begin{theorem}\label{attracting}
	Each immediate basin of each attracting, super attracting, or rationally indifferent cycle contains a critical point of $R$. 
\end{theorem}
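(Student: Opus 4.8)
The plan is to reduce everything to the case of a fixed point and then treat the attracting, super-attracting, and rationally indifferent cases separately, with the last being the genuinely delicate one.

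First I would dispose of the super-attracting case immediately: if $\zeta$ is a super-attracting fixed point then $R'(\zeta)=0$, so $\zeta$ is itself a critical point lying in its own immediate basin, and there is nothing to prove. Next I would reduce a cycle of period $k$ to a single fixed point by passing to $R^k$. Each $\zeta_j$ in the cycle is a fixed point of $R^k$ of the same type, its immediate basin for $R^k$ is the Fatou component $\Omega_j$ containing it (the Fatou components of $R$ and $R^k$ coincide), and each $\Omega_j$ is part of the immediate basin of the cycle for $R$. If I can produce a critical point $w$ of $R^k$ inside $\Omega_1$, then since $(R^k)'(w)=\prod_{i=0}^{k-1}R'(R^i(w))$ vanishes, some forward image $R^i(w)$ is a critical point of $R$; by forward invariance $R^i(w)$ still lies in the immediate basin of the cycle, which completes the reduction. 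Hence it suffices to treat a single fixed point $\zeta$ with $R(\zeta)=\zeta$.

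For an attracting fixed point, with $\lambda=R'(\zeta)$ and $0<|\lambda|<1$, I would argue by contradiction: suppose the immediate basin $\Omega$ contains no critical point of $R$. Because $\Omega$ is forward invariant, the structure theorem quoted above gives that $R\colon\Omega\to\Omega$ is an $m$-fold map onto $\Omega$; with no critical points in $\Omega$ it is unbranched, hence a holomorphic covering of $\Omega$ onto itself. Since $\mathbb{C}_{\infty}\setminus\Omega$ contains the infinite Julia set, $\Omega$ carries a hyperbolic metric, and a holomorphic covering of a hyperbolic surface onto itself is a local isometry for that metric. Evaluating the hyperbolic derivative at the interior fixed point $\zeta$, where the conformal factors cancel, yields $|R'(\zeta)|=1$, contradicting $|\lambda|<1$. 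Therefore $\Omega$ must contain a critical point. (Equivalently, one builds the increasing tower of univalent inverse branches of $R^n$ fixing $\zeta$ and applies the Schwarz--Pick lemma to the resulting self-map of their union, whose derivative at $\zeta$ equals $1/\lambda$.)

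The rationally indifferent case is where I expect the real work, and it is the main obstacle. The difficulty is that now $\zeta\in\partial\Omega$ rather than $\zeta\in\Omega$, so the isometry argument above fails: a covering self-map can behave as a parabolic isometry whose orbits converge to a boundary fixed point while preserving hyperbolic distances, so no contradiction is immediate. To handle it I would invoke the Leau--Fatou flower theorem to select an attracting petal $P\subset\Omega$ on which the relevant iterate is conjugated by a Fatou coordinate to the translation $w\mapsto w+1$. Assuming $\Omega$ has no critical point, the inverse branch corresponding to $w\mapsto w-1$ continues without obstruction, and pulling the petal back along it produces a nested exhaustion of $\Omega$ on which $R$ is univalent; passing to a limit should show that the Fatou coordinate extends to a conformal identification of $\Omega$ with a half-plane or with $\mathbb{C}$, forcing $\Omega$ to be simply connected and $R|_\Omega$ to be an automorphism incompatible with the local parabolic normal form at $\zeta$ (in particular with the number of petals the flower theorem attaches there). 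Controlling the geometry of the pulled-back petals, and ruling out the borderline annular configuration $\chi(\Omega)=0$ --- for which I would instead use the Riemann--Hurwitz relation $\sum_{z\in\Omega}(v_R(z)-1)=(m-1)\chi(\Omega)$ for the proper map $R\colon\Omega\to\Omega$ --- is the step that needs the most care.
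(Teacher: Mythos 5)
The paper does not actually prove this statement: it is quoted as background from Beardon's \emph{Iteration of Rational Functions} (it is the classical Fatou--Julia theorem), so there is no in-paper argument to compare yours against; I can only assess your proposal on its own terms. Your reduction from cycles to fixed points via $R^k$ and the chain rule is correct, the super-attracting case is indeed trivial, and your attracting case is complete and sound: properness of $R\colon\Omega\to\Omega$ plus the absence of critical points makes $R|_\Omega$ an unbranched covering of a hyperbolic surface, hence a local hyperbolic isometry, and evaluating at the interior fixed point forces $|R'(\zeta)|=1$, a contradiction. (This is a legitimate alternative to the more common normal-families argument with inverse branches $S_n$ satisfying $|S_n'(\zeta)|=|\lambda|^{-n}\to\infty$.)

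The genuine gap is the rationally indifferent case, which you correctly identify as the crux but do not close. Two specific problems: first, your proposed endpoint --- that $\Omega$ is conformally a half-plane or $\mathbb{C}$ and that this is ``incompatible with the local parabolic normal form'' --- is not a contradiction as stated, since a half-plane is hyperbolic and perfectly compatible with parabolic dynamics (the petal itself is conformally a half-plane). The standard way to finish is sharper: extend the Fatou coordinate to $\hat\phi\colon\Omega\to\mathbb{C}$ by $\hat\phi(z)=\phi(R^n(z))-n$; if $\Omega$ contains no critical point of $R$ then $\hat\phi$ is an unbranched covering of all of $\mathbb{C}$ (surjectivity comes from $\hat\phi(\Omega)\supset\bigcup_n(\phi(P)-n)=\mathbb{C}$, and the covering property from lifting paths through inverse branches of the unbranched $R|_\Omega$), so simple connectivity of $\mathbb{C}$ forces $\hat\phi$ to be a conformal isomorphism $\Omega\cong\mathbb{C}$, contradicting that $\Omega$ is hyperbolic. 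Second, your fallback to Riemann--Hurwitz for the case $\chi(\Omega)=0$ only tells you that an unbranched proper self-map of an annulus is consistent with $m>1$; it does not by itself rule out that configuration, so as written that branch of the argument is also open. Until the parabolic case is completed along the lines above, the proof is incomplete.
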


\begin{theorem}\label{ringcycle}
	Let $\{\Omega_1, ..., \Omega_q\}$ be a cycle of Siegel disks or of Herman rings for a rational map $R$. Then the closure of $C^+(R)$ contains $\bigcup \partial \Omega_j.$
\end{theorem}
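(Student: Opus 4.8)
The plan is to argue by contradiction, reducing to an invariant rotation domain and then showing that a boundary point lying off the postcritical set would have to belong to the Fatou set. First I would pass to $g=R^q$, so that each $\Omega_j$ is forward invariant and $g|_{\Omega_j}$ is conformally conjugate to an irrational rotation $w\mapsto e^{2\pi i\alpha}w$ of a disk (Siegel case) or an annulus (Herman case); since $F(R^q)=F(R)$ and $J(R^q)=J(R)$, and $\bigcup_j\partial\Omega_j$ is permuted by $R$, it suffices to treat one such $\Omega=\Omega_j$. Write $P=\mathit{Cl}(C^+)$ for the postcritical set. Because $R(C^+)\subseteq C^+$ and $R$ is continuous, $R(P)\subseteq P$, and hence $g(P)\subseteq P$; moreover every critical value of $g$ lies in $P$, since any critical point of $g=R^q$ satisfies $R^{j}(z)\in C$ for some $0\le j\le q-1$, so $g(z)=R^{q-j}(R^j(z))\in C^+\subseteq P$. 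Consequently $g(P)\subseteq P$ forces $g^{-1}(\mathbb{C}_\infty\setminus P)\subseteq\mathbb{C}_\infty\setminus P$, and $g$ is unramified over $\mathbb{C}_\infty\setminus P$; thus every branch of $g^{-n}$ is a single-valued univalent map on any simply connected subset of $\mathbb{C}_\infty\setminus P$ (by monodromy), with values in $\mathbb{C}_\infty\setminus P$. Finally, since $R$ carries a rotation domain it is not postcritically finite, so $P$ is infinite and $\mathbb{C}_\infty\setminus P$ omits at least three points (cf.\ \cite{Beardon}).

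Next I would record the recurrence of the rotation. From $\phi\circ g^{n}\circ\phi^{-1}(w)=e^{2\pi i n\alpha}w$ and the irrationality of $\alpha$, the return times $n$ for which $e^{2\pi i n\alpha}$ lies within $\delta$ of $1$ form a relatively dense (syndetic) set, with gaps bounded by some $N=N(\delta)$. Choosing such $n_k\to\infty$ with $e^{2\pi i n_k\alpha}\to 1$ gives $g^{n_k}\to\mathrm{id}$ locally uniformly on $\Omega$.

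Now suppose, for contradiction, that some $p\in\partial\Omega$ satisfies $p\notin P$. Choose a round disk $B\ni p$ with $\overline{B}\cap P=\emptyset$; since $p\in\partial\Omega$ we have $\Omega\cap B\neq\emptyset$. On $B$ the branch $h_{n_k}$ of $g^{-n_k}$ that agrees with the backward rotation on $\Omega\cap B$ is single-valued by the first paragraph and takes values in $\mathbb{C}_\infty\setminus P$; as this target omits at least three points, Montel's theorem makes $\{h_{n_k}\}$ a normal family on $B$. On $\Omega\cap B$ one has $\phi\circ h_{n_k}=e^{-2\pi i n_k\alpha}\,\phi\to\phi$, so $h_{n_k}\to\mathrm{id}$ there; by normality together with the identity theorem, every subsequential limit equals $\mathrm{id}$ on all of $B$, whence $h_{n_k}\to\mathrm{id}$ locally uniformly on $B$.

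It remains to convert this into $p\in F$, and this is the step I expect to be the main obstacle: the convergence of inverse branches controls only the backward maps, whereas normality at $p$ concerns the forward family $\{g^n\}$. The plan is to exploit the syndeticity from the second paragraph. From $h_{n_k}\to\mathrm{id}$ on a slightly smaller disk one obtains $g^{n_k}=h_{n_k}^{-1}\to\mathrm{id}$ uniformly on some disk $B''\ni p$, so $\{g^{n_k}\}$ is equicontinuous on $B''$. For an arbitrary large $m$, choose the largest $n_k\le m$; then $0\le m-n_k\le N$ and $g^{m}=g^{m-n_k}\circ g^{n_k}$ is a composition of one of the finitely many maps $g^0,\dots,g^N$ with a map close to the identity. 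Uniform continuity of these finitely many maps upgrades equicontinuity of $\{g^{n_k}\}$ to equicontinuity of the whole family $\{g^{n}\}$ on $B''$, so $B''\subseteq F(g)=F(R)$, contradicting $p\in\partial\Omega\subseteq J(R)$ (a boundary point of a Fatou component cannot be interior to any component). Hence $\partial\Omega\subseteq P$, and running this over the cycle yields $\bigcup_j\partial\Omega_j\subseteq\mathit{Cl}(C^+)$. The Herman ring case is identical, with the unit disk replaced by the invariant annulus; the only other point needing care is the justification that $P$ omits at least three points, used for Montel.
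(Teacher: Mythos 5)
The paper does not prove this statement at all: it is quoted as a known classical result (it appears in Beardon and in Milnor's book on the boundaries of rotation domains), so there is no in-paper argument to compare against. Judged on its own, your proposal follows the standard proof faithfully through the construction of the inverse branches $h_{n_k}$, the use of Montel on $\mathbb{C}_\infty\setminus P$, and the conclusion $h_{n_k}\to\mathrm{id}$ and hence $g^{n_k}\to\mathrm{id}$ near $p$; all of that is correct, including the observation that $g^{-1}(\mathbb{C}_\infty\setminus P)\subseteq\mathbb{C}_\infty\setminus P$ and that critical values of all iterates lie in $P$.

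The gap is exactly where you predicted it, and it is genuine: the syndeticity step is internally inconsistent. You need a single sequence $(n_k)$ that simultaneously satisfies $e^{2\pi i n_k\alpha}\to 1$ (to force the subsequential limit to be the identity) and has gaps bounded by a fixed $N$ (to write every $g^m$ as $g^{m-n_k}\circ g^{n_k}$ with $m-n_k\le N$). For irrational $\alpha$ no such sequence exists: if $\|n_k\alpha\|\to 0$ then $\|(n_{k+1}-n_k)\alpha\|\le\|n_{k+1}\alpha\|+\|n_k\alpha\|\to 0$, while boundedness of the gaps by $N$ would force $\|(n_{k+1}-n_k)\alpha\|\ge\min_{1\le m\le N}\|m\alpha\|>0$. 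The set $S_\delta=\{n:\|n\alpha\|<\delta\}$ is syndetic for each fixed $\delta$, but its gap bound $N(\delta)$ blows up as $\delta\to 0$, and for fixed $\delta$ the limits of $\{h_n\}_{n\in S_\delta}$ need not be the identity, so you lose control of $p\in h(B)$ and hence of the forward iterates at $p$. The standard way to close the argument avoids the forward family entirely: once $g^{n_k}\to\mathrm{id}$ on a disk $B''\ni p$, you have $g^{n_k}(B'')\subseteq B$ for large $k$; but $B''$ meets $J$ (since $p\in\partial\Omega\subseteq J$), and by the blow-up property of Julia sets $g^{n}(B'')\supseteq J$ for all sufficiently large $n$, giving $J\subseteq B$. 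Running this with $B=D(p,\epsilon)$ for every small $\epsilon$ forces $J=\{p\}$, contradicting the fact that $J$ is infinite. With that substitution for your final paragraph, the proof is complete; the remaining small point you flagged (that $P$ has at least three points) follows because a map with $|\mathit{Cl}(C^+)|\le 2$ is conjugate to $z\mapsto z^{\pm d}$, which has no rotation domain.
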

Therefore we can discover a lot about $F(R)$ by considering the orbits of the critical points of $R$. And since we have a bound on the number of critical points, this gives us a bound on the total number of forward invariant Fatou components. In fact, a theorem due to Shishikura states a stronger result:
\begin{theorem}[Shishikura]\cite{Surgery} \label{surgery}
	A rational function of degree $d$ has at most $2d-2$ cycles of stable regions, with each Herman ring counted twice and there exist at most $d-2$ Herman rings. 
\end{theorem}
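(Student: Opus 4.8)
The plan is to follow Shishikura's quasiconformal surgery argument, whose engine is the Measurable Riemann Mapping Theorem: if one produces an $R$-invariant Beltrami coefficient $\mu$ with $\|\mu\|_\infty < 1$, integrating it yields a quasiconformal $\phi$ with $\phi_* \mu = 0$ so that $S = \phi \circ R \circ \phi^{-1}$ is again rational of the same degree $d$. By Sullivan's Theorem \ref{Sullivan} it suffices to bound the number of cycles of periodic Fatou components. The strategy is to convert every periodic cycle that is not already (super)attracting into an attracting one without raising the degree, and then count attracting cycles by the critical points each must contain (Theorem \ref{attracting}).

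First I would handle the indifferent components, where the family $\{R^n\}$ has non-constant limit functions. On a Siegel disk, $R$ is conjugate to an irrational rotation; in the linearizing coordinate I replace the rotation on a slightly smaller invariant disk by a map attracting the center, interpolating quasiconformally on the annulus between the two radii and pulling the resulting Beltrami field back through all preimages to make it $R$-invariant. Bounded dilatation is guaranteed because the field is nontrivial only on the grand orbit of one annulus, where $R$ acts by rotations. Integrating produces a rational map of degree $d$ in which the Siegel cycle has become attracting. Parabolic cycles are treated by an analogous perturbation opening the petals into an attracting basin. A Herman ring is the crucial case: $R$ is conjugate to a rotation of a genuine annulus with two boundary circles, so I cut along the core curve and glue an attracting disk into each half, producing two attracting cycles from the one ring. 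This doubling is exactly why each Herman ring must be counted twice.

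After surgery, every original periodic cycle --- with each Herman ring split into two --- corresponds to an attracting or superattracting cycle of a rational map $S$ of degree $d$. By Theorem \ref{attracting}, each such cycle contains at least one critical point of $S$ in its immediate basin, and distinct cycles have disjoint immediate basins, hence absorb distinct critical points. The Riemann--Hurwitz relation (Theorem \ref{R-H}) gives $S$ exactly $2d-2$ critical points counted with multiplicity, yielding the bound of $2d-2$ cycles of stable regions with Herman rings counted twice.

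The sharper bound of $d-2$ Herman rings is the hardest step and requires a separate topological argument rather than the crude critical-point count. Here I would use Theorem \ref{ringcycle}: the closure of the forward critical orbit must meet both boundary circles of every ring, so each ring effectively demands critical data on two sides. Combining this with a Riemann--Hurwitz computation for the restriction of $R$ to the surface obtained by cutting $\ci$ along the core curves of all Herman ring cycles --- whose Euler-characteristic bookkeeping forces at least two ``extra'' critical points to be consumed per ring beyond what a mere attracting basin needs --- produces the improvement from $2d-2$ to $d-2$. The main obstacle throughout is technical control of the surgery: verifying that the glued Beltrami coefficient is genuinely $R$-invariant with dilatation bounded away from $1$, and that the doubling and cutting operations preserve the degree while counting no cycle twice except the Herman rings. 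The delicate accounting in the final Euler-characteristic step is what makes the $d-2$ bound substantially more subtle than the $2d-2$ bound.
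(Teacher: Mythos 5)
The paper does not actually prove this theorem: it is quoted from Shishikura \cite{Surgery} and used as a black box, so there is no in-paper argument to compare against. Judged on its own terms, your sketch follows the correct overall strategy --- quasiconformal surgery to make every cycle attracting, then the critical-point count of Theorem \ref{attracting} against the $2d-2$ critical points of Theorem \ref{R-H} --- and your treatment of the Siegel-disk case is essentially right. But the Herman-ring accounting has a genuine gap. Cutting $\mathbb{C}_{\infty}$ along the core curves of the Herman rings \emph{disconnects} the sphere, so the surgery does not return a single rational map $S$ of degree $d$: it returns finitely many rational maps $g_1,\dots,g_m$, one for each complementary piece, with $\sum_j(\deg g_j-1)\le d-1$. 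The ``counted twice'' arises because each cycle of Herman rings contributes a rotation-domain cycle to exactly \emph{two} of the pieces, one for each side of its core curve, and the bound $2d-2$ comes from summing the inequality ``number of non-repelling cycles of $g_j$ is at most $2\deg g_j-2$'' over $j$ and using the degree additivity. Your version, in which a single degree-$d$ map absorbs two new attracting cycles per ring, does not survive the cut, and without the degree bookkeeping across pieces the count is not justified.

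The $d-2$ bound is also not obtained the way you describe. The mechanism is not that Theorem \ref{ringcycle} forces ``two extra critical points per ring'': a single critical orbit can a priori accumulate on both boundary circles of a ring, so that count does not close. The actual argument falls out of the same decomposition. The core curves of the $n_{HR}$ cycles of rings form at least $n_{HR}$ disjoint simple closed curves, hence cut the sphere into at least $n_{HR}+1$ pieces; every piece has at least one cut curve on its boundary, acquires a Siegel disk from the capping there, and therefore must have degree at least $2$. Consequently $d-1=\sum_j(\deg g_j-1)\ge n_{HR}+1$, i.e.\ $n_{HR}\le d-2$. So your instinct to cut along core curves and do degree/Euler-characteristic bookkeeping is the right one, but the inequality comes from ``each piece has degree at least $2$,'' not from consuming extra critical points.
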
 
This theorem implies that maps of degree $3$ can have at most $1$ forward invariant Herman ring.

\section{Dianalytic maps on $\rp$} \label{sec:dianalytic} We give the general form for dianalytic maps as well as a few results about them. 
A $\emph{dianalytic}$ map, $\tilde{f}$ of $\mathbb{RP}^2$ is a map that lifts to an analytic map of the double cover $\mathbb{C}_{\infty}$. We primarily look at $f$ as an analytic map of $\mathbb{C}_{\infty}$ while keeping in mind the additional structure forced on $f$ by being a lift of $\tilde{f}$. Clearly, to project to a dianalytic map, an analytic map must commute with the antipodal map, $\phi(z)=-1/\overline{z}$. Let $\mathfrak{p}:\mathbb{C}_{\infty}\rightarrow\mathbb{RP}^2$ denote the projection map identifying $\mathbb{C}_{\infty}$ as a double cover of $\mathbb{RP}^2$. Then for appropriate $f$,  $\tilde{f}:\mathbb{RP}^2\rightarrow\mathbb{RP}^2$ defined by $\tilde{f}([z])=\mathfrak{p}\circ f\circ \mathfrak{p}^{-1}([z])$ is a well defined dianalytic map on $\mathbb{RP}^2$ where $[z]$ denotes the equivalence class of $z\in \mathbb{C}_{\infty}$ under $\phi$. 

There is a one to one correspondence between dianalytic maps $\tilde{f}:\mathbb{RP}^2\rightarrow \mathbb{RP}^2$ and analytic self maps of  $\mathbb{C}_{\infty}$ (i.e. rational maps) commuting with $\phi$. Additionally in  \cite{BarzaGhisa} it is shown that these rational maps must be of the form:
\begin{equation}\label{general form}
	f(z)=e^{i\theta}\frac{a_0z^{2n+1}+a_1z^{2n}+...+a_{2n+1}}{-\overline{a_{2n+1}}z^{2n+1}+\overline{a_{2n}}z^{2n}-...+\overline{a_0}}, \hspace{.7cm} |a_0|+|a_{2n+1}|\neq 0. 
\end{equation}
\begin{prop}\label{antipodal fixed points}\cite{GHawk} 
	If $f$ is analytic on $\mathbb{C}_{\infty}$ and commutes with $\phi$ and $\zeta$ is a fixed point for $f$ then $\omega=\phi(\zeta)$ is also a fixed point of $f$ with $f'(\omega)=\overline{f'(\zeta)}$ .
\end{prop}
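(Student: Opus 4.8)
The plan is to establish the two assertions in turn, the fixed-point claim being immediate and the multiplier identity requiring care because $\phi$ is anti-holomorphic. First I would record that $\phi$ is an involution: $\phi(\phi(z)) = -1/\overline{\,-1/\overline z\,} = z$. Combined with the commuting hypothesis $f\circ\phi = \phi\circ f$, this gives $f(\omega) = f(\phi(\zeta)) = \phi(f(\zeta)) = \phi(\zeta) = \omega$, so $\omega=\phi(\zeta)$ is a fixed point.

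For the multiplier, the obstacle is that one cannot differentiate $f\circ\phi=\phi\circ f$ with the ordinary chain rule, since $\phi$ is not holomorphic. I would remove the conjugation by factoring $\phi = \psi\circ\sigma$, where $\sigma(z)=\overline z$ is complex conjugation and $\psi(w)=-1/w$ is a M\"obius (hence holomorphic) map. Introduce the holomorphic ``mirror'' map $g:=\sigma\circ f\circ\sigma$, i.e. $g(z)=\overline{f(\overline z)}$; this is again a rational map, and a direct check gives $g'(z)=\overline{f'(\overline z)}$. Pre- and post-composing the relation $f\circ\phi=\phi\circ f$ with $\sigma$ and using $\sigma^2=\mathrm{id}$ converts it into the purely holomorphic identity
\[
f\circ\psi = \psi\circ g.
\]

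Then I would differentiate this last identity by the ordinary chain rule,
\[
f'(\psi(w))\,\psi'(w) = \psi'(g(w))\,g'(w),
\]
and evaluate at $w=\overline\zeta$. Here $\psi(\overline\zeta)=-1/\overline\zeta=\phi(\zeta)=\omega$, and since $f(\zeta)=\zeta$ one computes $g(\overline\zeta)=\overline{f(\zeta)}=\overline\zeta$, so the common factor $\psi'(\overline\zeta)=1/\overline\zeta^{\,2}$ cancels from both sides, leaving
\[
f'(\omega)=g'(\overline\zeta)=\overline{f'(\zeta)},
\]
as desired. Equivalently, one can differentiate $f\circ\phi=\phi\circ f$ directly with Wirtinger derivatives to obtain the general identity $f'(\phi(z)) = \overline{f'(z)}\,\bigl(\overline z/\overline{f(z)}\bigr)^2$ and then set $z=\zeta$.

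The main thing to watch is the bookkeeping of conjugation, together with the degenerate fixed points $\zeta\in\{0,\infty\}$, where $\psi$ or $\psi'$ is singular and the pair $(\zeta,\omega)$ collapses onto $\{0,\infty\}$. These I would treat separately using the chart-change convention already adopted in the paper (classifying a fixed point at $\infty$ by conjugating with $z\mapsto 1/z$), or by repeating the computation in a local coordinate; in either case the same cancellation yields the stated relation.
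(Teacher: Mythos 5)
The paper does not actually prove this proposition; it is quoted from \cite{GHawk} and stated without argument, so there is no in-paper proof to compare against. Judged on its own, your proof is correct and complete. The fixed-point claim via the involution property $\phi\circ\phi=\mathrm{id}$ is right, and the factorization $\phi=\psi\circ\sigma$ with $\sigma(z)=\overline z$ and $\psi(w)=-1/w$ is exactly the right device for handling the anti-holomorphic obstruction: the identity $f\circ\psi=\psi\circ g$ with $g=\sigma\circ f\circ\sigma$ is purely holomorphic, the chain-rule computation at $w=\overline\zeta$ is correct, and the cancellation of $\psi'(\overline\zeta)=1/\overline{\zeta}^{\,2}$ is legitimate precisely because $\zeta\notin\{0,\infty\}$ there. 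Your Wirtinger-derivative alternative also checks out: the general identity $f'(\phi(z))=\overline{f'(z)}\,(\overline z/\overline{f(z)})^{2}$ specializes correctly at a fixed point since the extra factor becomes $1$. The one point genuinely requiring care --- the degenerate pair $\{0,\infty\}$, where $\omega=\phi(0)=\infty$ and the multiplier must be read through the chart $z\mapsto 1/z$ --- you flag explicitly and dispose of by the paper's own convention, which is the standard and correct resolution. Nothing is missing.
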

\begin{prop}\label{Fatouu}
	If $f$ is analytic and commutes with $\phi$ (i.e., $f$ is of the form (\ref{general form})) and $F$ is the Fatou set of $f$ then $\Omega\subseteq F$ implies $\phi(\Omega)\subseteq F$.
\end{prop}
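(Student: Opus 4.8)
The plan is to prove the stronger statement that $\phi(F)=F$; the proposition then follows immediately, since $\Omega\subseteq F$ gives $\phi(\Omega)\subseteq\phi(F)=F$. Because $\phi$ is an involution (that $\phi\circ\phi=\mathrm{id}$ is immediate from $\phi(z)=-1/\overline{z}$), it in fact suffices to establish the single inclusion $\phi(F)\subseteq F$: applying $\phi$ to this inclusion yields $F\subseteq\phi(F)$, and the two together give equality. Thus I would reduce the whole argument to showing that $z_0\in F$ implies $\phi(z_0)\in F$.

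The two facts I would lean on are: (i) $\phi$ is a homeomorphism of $\mathbb{C}_\infty$ --- indeed an isometry of the spherical (chordal) metric $\sigma$, being the antipodal map of the round sphere expressed in the stereographic coordinate; and (ii) the commuting relation $f\circ\phi=\phi\circ f$ iterates, so that $f^n\circ\phi=\phi\circ f^n$ for every $n\geq 1$, by a one-line induction. The remaining work is then simply to transport equicontinuity of the family $\{f^n\}$ from $z_0$ to $\phi(z_0)$ through $\phi$.

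Concretely, fix $z_0\in F$ and set $w_0=\phi(z_0)$. Given any $w$, write $z=\phi(w)$, so that $w=\phi(z)$ and, by (i), $\sigma(w,w_0)=\sigma(\phi(z),\phi(z_0))=\sigma(z,z_0)$. By (ii) we have $f^n(w)=\phi(f^n(z))$ and $f^n(w_0)=\phi(f^n(z_0))$, whence, again by the isometry property (i),
\[
\sigma\bigl(f^n(w),f^n(w_0)\bigr)=\sigma\bigl(f^n(z),f^n(z_0)\bigr)\qquad(n\geq 1).
\]
Now, given $\varepsilon>0$, choose $\delta>0$ from the equicontinuity of $\{f^n\}$ at $z_0$; the displayed identity shows the same $\delta$ witnesses equicontinuity of $\{f^n\}$ at $w_0$, so $w_0=\phi(z_0)\in F$, as required. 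Equivalently one may argue via normality: $\{f^n\}$ is normal on some neighborhood $U$ of $z_0$, and since $f^n|_{\phi(U)}=\phi\circ f^n\circ\phi$, locally uniform convergence of a subsequence on $U$ passes through the homeomorphism $\phi$ to locally uniform convergence on $\phi(U)$, exhibiting normality on the neighborhood $\phi(U)$ of $\phi(z_0)$.

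There is no serious obstacle here; the one point deserving a moment's care is the isometry claim in (i), namely that $\phi(z)=-1/\overline{z}$ preserves the chordal metric. This can be verified by a direct computation with the chordal distance formula, or invoked as the standard fact that antipodal identification is a spherical isometry. Everything else is formal, and I would emphasize that nothing uses holomorphicity of $\phi$ --- which in any case fails, as $\phi$ is anti-holomorphic --- since only continuity of $\phi$ together with the commuting relation enters the argument.
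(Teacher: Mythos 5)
Your proof is correct. Your primary argument differs in flavor from the paper's: the paper works with normality (its stated equivalent characterization of the Fatou set), taking an arbitrary sequence of iterates, extracting a subsequence converging locally uniformly on $\Omega$, and using $f^{n_k}\circ\phi=\phi\circ f^{n_k}$ together with continuity of $\phi$ to push that convergence over to $\phi(\Omega)$ --- essentially the ``equivalently one may argue via normality'' variant you sketch at the end. Your main line instead transports equicontinuity directly, which requires the slightly stronger input that $\phi$ is an isometry of the chordal metric (the paper's route needs only that $\phi$ is a homeomorphism of the compact sphere, hence uniformly continuous); in exchange you get a cleaner quantitative statement (the same $\delta$ works at $\phi(z_0)$) and, by framing the result as $\phi(F)=F$ via the involution property, you obtain the full invariance of the Fatou set rather than the one-directional inclusion stated in the proposition. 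Both arguments rest on the same two pillars --- the iterated commuting relation and the good metric behavior of $\phi$ --- and your observation that anti-holomorphicity of $\phi$ is irrelevant is accurate and worth keeping.
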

\begin{proof}
	Let $\{f_n\}$ be a sequence of iterates of $f$. As $\{f^n\}$ is normal in $\Omega$, $\{f_n\} $ contains a subsequence, $\{f_{n_k}\} $ converging locally uniformly on $\Omega$ to some function $g$. But $f_{n_k}\phi=\phi f_{n_k}$, so,
	\begin{align*}
	\lim\limits_{k\rightarrow\infty} f_{n_k}\phi&=	\lim\limits_{k\rightarrow\infty}\phi f_{n_k}\\
	&=\phi \lim\limits_{k\rightarrow\infty} f_{n_k}\\
	&=\phi(g)
	\end{align*}
	So $f_{n_k}$ converges locally uniformly on $\phi(\Omega)$. Thus $\phi(\Omega)$ is in $F$.
\end{proof}
\begin{theorem}\label{antipodalFatou}
	If $f$ is analytic and commutes with $\phi$ and $\Omega$ is a forward invariant component of $F(f)$ then $W=\phi(\Omega)$ is also a forward invariant component of the same type.
\end{theorem}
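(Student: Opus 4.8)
\section*{Proof proposal}

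The plan is to exploit that $\phi$ is an anti-analytic involution of $\mathbb{C}_{\infty}$ which conjugates $f$ to itself, so it transports every piece of dynamical structure from $\Omega$ to $W=\phi(\Omega)$. First I would record the purely topological facts. Since $\phi$ is a homeomorphism and Proposition~\ref{Fatouu} gives $\phi(F)\subseteq F$, applying $\phi$ once more (it is an involution, $\phi\circ\phi=\mathrm{id}$) yields $\phi(F)=F$; hence $\phi$ restricts to a homeomorphism of $F$ onto itself and therefore carries the component $\Omega$ to a single component $W=\phi(\Omega)$ of $F$. Forward invariance of $W$ is then immediate from the commutation relation: since $f(\Omega)=\Omega$, we get $f(W)=f(\phi(\Omega))=\phi(f(\Omega))=\phi(\Omega)=W$.

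It remains to match the type in Theorem~\ref{Classification}. The quantitative input is Proposition~\ref{antipodal fixed points}: if $\zeta$ is a fixed point of $f$ then $\phi(\zeta)$ is a fixed point with $f'(\phi(\zeta))=\overline{f'(\zeta)}$, so $|f'(\phi(\zeta))|=|f'(\zeta)|$ and $f'(\phi(\zeta))$ is a root of unity exactly when $f'(\zeta)$ is. Thus $\phi$ preserves the full classification of fixed points. For the attracting and super-attracting cases, $\Omega$ contains a fixed point $\zeta$ of the relevant type, so $W$ contains $\phi(\zeta)$ of the same type and is of that type. For the parabolic case, $\Omega$ has a rationally indifferent $\zeta\in\partial\Omega$ with $f^n\to\zeta$ on $\Omega$; then $\phi(\zeta)\in\phi(\partial\Omega)=\partial W$ is again rationally indifferent, and for $w=\phi(z)\in W$ we have $f^n(w)=\phi(f^n(z))\to\phi(\zeta)$ locally uniformly by continuity of $\phi$, so $W$ is parabolic.

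The interesting cases are the Siegel disk and the Herman ring, and here I would argue intrinsically. Because $f^{n_k}=\phi\circ f^{n_k}\circ\phi^{-1}$, whenever $f^{n_k}\to g$ locally uniformly on $\Omega$ the restrictions $f^{n_k}|_W$ converge locally uniformly to $\phi\circ g\circ\phi^{-1}$; as $\phi$ is a homeomorphism this gives a bijection $\mathscr{F}(\Omega)\to\mathscr{F}(W)$, $g\mapsto\phi\,g\,\phi^{-1}$, preserving (non)constancy. Hence $\mathscr{F}(W)$ contains a non-constant function iff $\mathscr{F}(\Omega)$ does, and by the dichotomy for forward invariant components with non-constant limit functions, $W$ is a Siegel disk or a Herman ring precisely when $\Omega$ is. To separate the two I would invoke again that $\phi$ is a homeomorphism: $W$ is homeomorphic to $\Omega$ and so has the same connectivity, distinguishing the simply connected Siegel disk from the doubly connected Herman ring. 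Equivalently, one can transport the linearizing coordinate explicitly: if $h\colon\Omega\to\mathbb{D}$ conjugates $f$ to $z\mapsto e^{2\pi i\alpha}z$, then $H=\kappa\circ h\circ\phi^{-1}$, with $\kappa(w)=\overline{w}$, is \emph{analytic} on $W$ and conjugates $f|_W$ to $z\mapsto e^{-2\pi i\alpha}z$, directly exhibiting $W$ as a Siegel disk with the negated rotation number; the annulus version handles the Herman ring.

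I expect the only genuine subtlety to be this last step, where the anti-analyticity of $\phi$ prevents one from simply pushing the analytic linearizing chart forward. The resolution is precisely to precompose with complex conjugation $\kappa$: since $\kappa\circ\phi^{-1}$ is analytic, this restores analyticity while explaining the sign flip in the rotation number. Every other step is a formal consequence of $\phi$ being an anti-analytic involution commuting with $f$.
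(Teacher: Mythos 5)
Your proposal is correct, and for the topological setup, forward invariance, and the (super)attracting and parabolic cases it coincides with the paper's proof essentially line for line (same use of Proposition~\ref{Fatouu}, the commutation relation, and Proposition~\ref{antipodal fixed points}). Where you diverge is in the Siegel disk and Herman ring cases. The paper argues by elimination: having shown that $W$ cannot be (super)attracting or parabolic (else $\Omega=\phi(W)$ would be too), it concludes from Theorem~\ref{Classification} that $W$ is a rotation domain, and then separates the two subcases by the presence or absence of a fixed point in $W$, which is again supplied by Proposition~\ref{antipodal fixed points}. You instead push the limit-function dichotomy through $\phi$ directly ($g\mapsto\phi\circ g\circ\phi^{-1}$ gives a bijection $\mathscr{F}(\Omega)\to\mathscr{F}(W)$ preserving non-constancy) and then distinguish disk from ring by connectivity, since $\phi$ is a homeomorphism. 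Both routes are valid; the paper's is shorter and stays entirely at the level of the classification theorem, while yours is more self-contained and yields extra information, in particular your explicit transport of the linearizing chart via $H=\kappa\circ h\circ\phi^{-1}$ correctly identifies the rotation number of $W$ as the negative of that of $\Omega$ — a fact the paper's elimination argument does not produce. Your observation that anti-analyticity of $\phi$ must be repaired by precomposing with conjugation is exactly the right subtlety to flag; the paper sidesteps it entirely by never constructing the conjugacy.
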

\begin{proof}
	By Prop \ref{Fatouu}, $W \subset F$. By definition, if $\Omega$ is a forward invariant component of $F(f)$ then $f(\Omega)=\Omega$. Since $f$ commutes with $\phi$, $f(W)=f\circ\phi(\Omega)=\phi\circ f(\Omega)=\phi(\Omega)=W$. So $W$ is also forward invariant.
	Additionally, $f$ satisfies the hypothesis of Prop \ref{antipodal fixed points}, so if $\zeta$ is a fixed point for $f$ and $\phi(\zeta)=\omega$, then $|f'(\omega)|=|f'(\zeta)|$ and $\omega$ is also a fixed point of $f$ of the same type.
	
	Suppose $\Omega$ is a (super)attracting component. Then $\Omega$ contains a (super)attractive fixed point $\zeta$; this implies $\omega=\phi(\zeta)\in W$ is also a (super)attractive fixed point, so $W$ must be an (super)attracting Fatou component. 
	
	Suppose $\Omega$ is a parabolic component. Then $\Omega$ will have a rationally indifferent fixed point $\zeta$ on $\partial \Omega$ and therefore $\omega=\phi(\zeta)$ will be a rationally indifferent fixed point on $\partial W$. Let $x\in W$. Then $\phi(x)=z\in \Omega$. As $\Omega$ is parabolic, $f^n(z)\rightarrow \zeta$. Since $f$ commutes with $\phi$, $f^n\phi(z)=\phi f^n(z)$ for all $n\in\mathbb{N}$. So $\lim\limits_{n\rightarrow \infty}f^n(x)=\lim\limits_{n\rightarrow \infty} f^n\phi(z)=\lim\limits_{n\rightarrow\infty}\phi f^n(z)\rightarrow \phi (\zeta)=\omega.$ Thus $W$ is a parabolic Fatou component. 
	
	Suppose $\Omega$ is a Siegel disk. If $W$ were a (super)attracting or parabolic component, the arguments above would mean that $\phi(W)=\Omega$ would also have to be (super)attracting or parabolic, thus by Thm \ref{Classification}, $W$ must be a Siegel disk or a Herman ring. As $\Omega$ contains a fixed point $\zeta$, $\omega=\phi(\zeta)$ must also be a fixed point in $W$. So $W$ cannot be a Herman ring and therefore is a Siegel disk.
	
	Suppose $\Omega$ is a Herman ring. If $W$ were a (super)attracting component, parabolic component, or Siegel disk the arguments above would mean that $\phi(W)=\Omega$ would also have to be a (super)attracting component, parabolic component, or a Siegel disk. Since this is not the case, $W$ is also a Herman ring.
\end{proof}

\subsection{Bicritical Rational Maps}
Sue Goodman and Jane Hawkins studied bicritical rational maps  (the map $f$ has exactly $2$ distinct critical points) of the form (\ref{general form}) in \cite{GHawk}. 
This is a fairly strong condition which we relax to obtain new examples here. They showed that every bicritical rational map of odd degree inducing a dianalytic map of $\mathbb{RP}^2$ must be conjugate to a map of the form:
\begin{equation}\label{bicritical}
	f_{\alpha}(z)=\frac{z^n+\alpha}{-\overline{\alpha}z^n+1}, \hspace{.7cm} \arg(\alpha)\in [0,\pi/(n-1)].
\end{equation}
In \cite{GHawk}, they showed that if $f$ is of the form (\ref{bicritical}), $V$ cannot be a Herman ring. This will no longer be the case when we remove the bicritical assumption. Furthermore, if $f_{\alpha}$ has a non-repelling $k-$cycle, $B$, either $B$ is the \emph{only} non-repelling cycle or there are exactly two non-repelling cycles, $B$ and $\phi(B)$. In the first case, $k$ is even and $\mathfrak{p}(B)$ is a $k/2$ cycle in $\mathbb{RP}^2$. In the second case $B\cap\phi(B)=\emptyset$ and $B$ and $\phi(B)$ collapse to a single $k-$cycle in $\mathbb{RP}^2$. These results apply to any function of the form (\ref{bicritical}) with odd degree, however, they had additional results for degree $3$ maps. They generated a parameter space and proved specific dynamical properties for maps with $\alpha$ purely real or purely imaginary. Particularly relevant for us will be the dynamics along the imaginary axis since we will focus on perturbations off this axis. So, the imaginary axis will be a subset of our parameter space. Along the imaginary axis we see a lot of period $2$ behavior (shown in Fig \ref{fig: HawkSpace} in Section $5$). In particular, if $\alpha=ib$ with $b\in\mathbb{R}$ they showed:
\begin{itemize}
	\item For $b<1/\sqrt{2}$, the map $f_{ib}$ has a pair of attracting fixed points. 
	\item For $b=1/\sqrt{2}$, the map $f_{ib}$ the map has $2$ neutral fixed points.
	\item For $b \in (1/\sqrt{2},\sqrt{2})$, the map $f_{ib}$ has two attracting period-$2$ orbits that map to a single period-2 orbit in $\mathbb{RP}^2$.
	\item For $b=\sqrt{2}$, the map $f_{ib}$ has a neutral period $2$ orbit that collapses to a neutral fixed point in $\mathbb{RP}^2$.
	\item for $b>\sqrt{2}$, the map $f_{ib}$ has a attractive period $2$ orbit that collapses to an attracting fixed point.
\end{itemize}

\section{Examples of Herman rings for dianalytic maps}\label{sec:examples}

\subsection{A Perturbation of Degree $3$ Bicritical Dianalytic Maps}\label{section:ourmaps}
Here we focus on a perturbation of the degree $3$ maps in \cite{GHawk} that still satisfy (\ref{general form}) but are no longer bicritical. Namely we consider maps of the form 

\begin{equation}\label{degree3}
	f_{a,b}(z)=\frac{z^3+az^2+b}{-\bar{b}z^3-\bar{a}z+1}, \hspace{.7cm} a,b\in \mathbb{C}\setminus 0.
\end{equation}
This type of map was mentioned briefly in \cite{GHawk}, but not fully explored. By varying the parameters, $a$ and $b$, we find maps with many dynamical properties different from those discussed in \cite{GHawk}. 

We claim that unlike the maps in \cite{GHawk}, some maps of this form have Fatou components that are Herman rings. The two cases that we focus on are maps such that:
\begin{enumerate}
	\item $F$ contains both a Herman ring and attractive periodic components, and
	\item $F$ contains only a Herman ring and its preimages. 
\end{enumerate}
Additionally, since (\ref{degree3}), involves two complex parameters (which results in a $4$ dimensional real parameter space) we focus on $a \in \mathbb{R}$ and $b=\beta i$ for $\beta \in \mathbb{R}$.  Some other subsets of the parameter space and examples can be found in \cite{Rand}.

An example of the first type, the map $\ds f_{1.9,1.5i}(z)=\frac{z^3+1.9z^2+1.5i}{1.5iz^3-1.9z+1}$, can be seen in Fig \ref{fig:1.9and1.5i}. The critical points for this map are $c_1\approx-0.690012 + 1.97138 i$, $c_2\approx1.24652 - 0.4363 i$, $c_3=-1/\overline{c_1}$, and $c_4=-1/\overline{c_2}$. This map has antipodal attractive period two cycles approximated by $ \{-0.0740296 - 1.51629 i, 1.18601 - 0.262465 i\}$ (in blue) and $ \{-0.8038 + 0.177882 i, 0.0321221 + 0.657934 i\}$ (in green). Let $\Delta$ denote the unit disk. Here the green cycle lies inside $\Delta$ and the blue cycle lies outside of $\Delta$. The $2$-cycles are clearly attractive since the modulus of the multiplier of the cycles is easily approximated to be  $0.5277$ (this is the same for both cycles by the antipodal symmetry of $f$). Green points in the image are in the basin of attraction of the green cycle, while light blue points are in the basin of attraction to the blue cycle. The coloration of other points is based on where inside the ring the orbit of the point lands. The gray and black curves show the portion of the postcritical set coming from iteration of $c_1$ and $c_3$ that form the boundary of the Herman ring. Red points do not converge to the ring or periodic cycle
so highlight $J(f)$, and the algorithm is described in \cite{Rand}.

\begin{figure}[h]
	\centering
	\includegraphics[width=10cm]{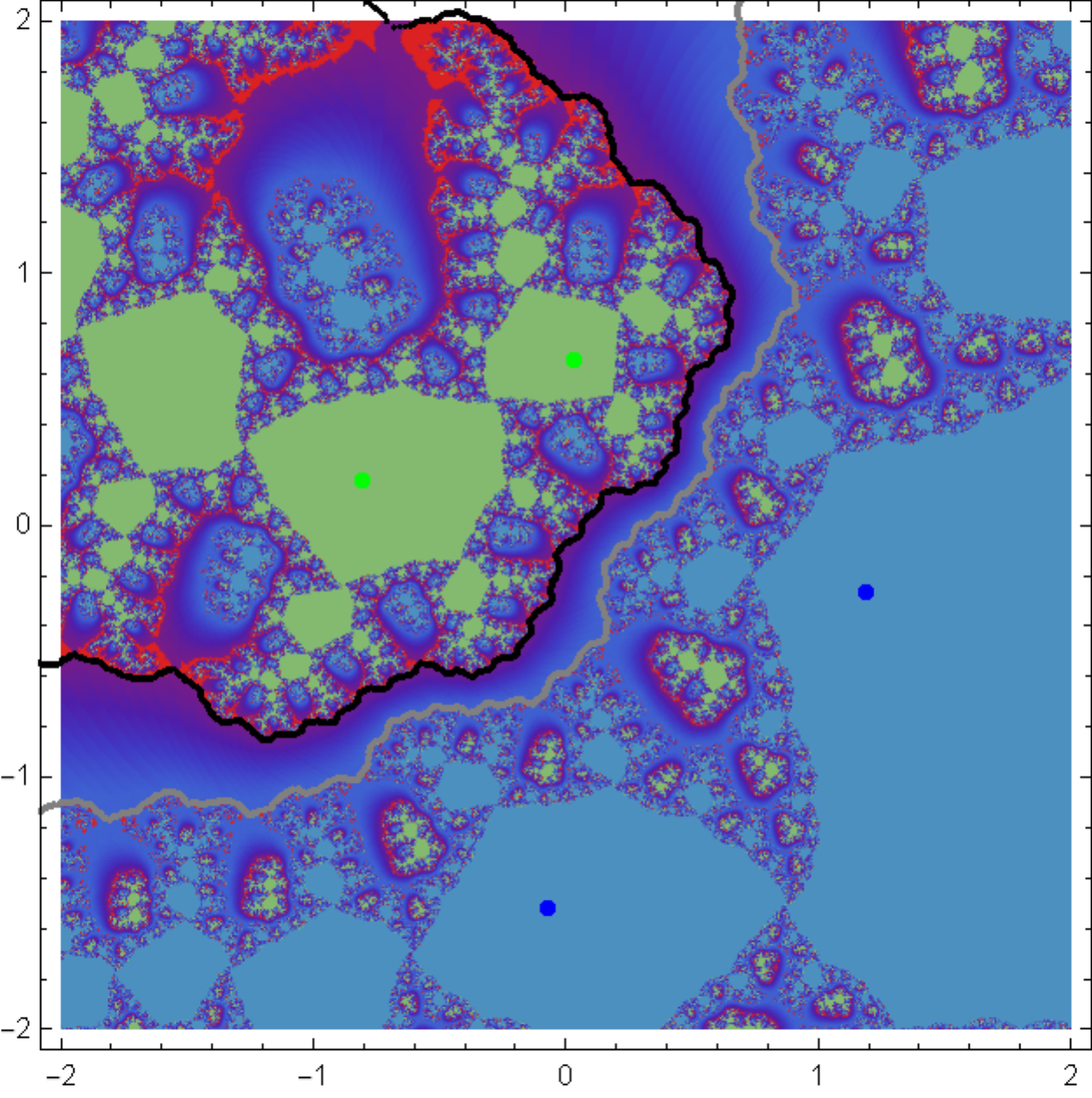}
	\caption{Julia and Fatou set for the map $f_{1.9,1.5i}(z)=\frac{z^3+1.9z^2+1.5i}{1.5iz^3-1.9z+1}$, with attracting period 2 cycles in green and blue.}
	\label{fig:1.9and1.5i}
\end{figure}

The map $ \ds f_{1.8,2i}(z)=\frac{z^3+1.8z^2+2i}{2iz^3-1.8z+1}$ is an example of the second type and can be seen in Fig \ref{fig:1.8and2i}. Here the critical points are $c_1\approx-1.04445 + 1.86507 i$, $c_2\approx1.3711 - 0.767825 i$, $c_3=-1/\overline{c_1}$, and $c_4=-1/\overline{c_2}$. In this image, points are colored according to where in the ring their orbits land, so points that are dark purple, under iteration end up near the upper edge of the ring, lighter points go towards the lower edge of the ring, and so on. The black curves show the postcritical set and each distinct curve corresponds to the orbit a different critical point. Notice that two of the curves lie inside the ring and two along the boundary of the ring. The exterior curves correspond to $c_1$ and $c_3$, while the interior curves correspond to iterates of $c_2$ and $c_4$. Points that are white do not converge within the number of iterations allowed; therefore these are points that are  likely to be in or very close to the Julia set. 
\begin{figure}[h]
	\centering
	\includegraphics[width=10cm]{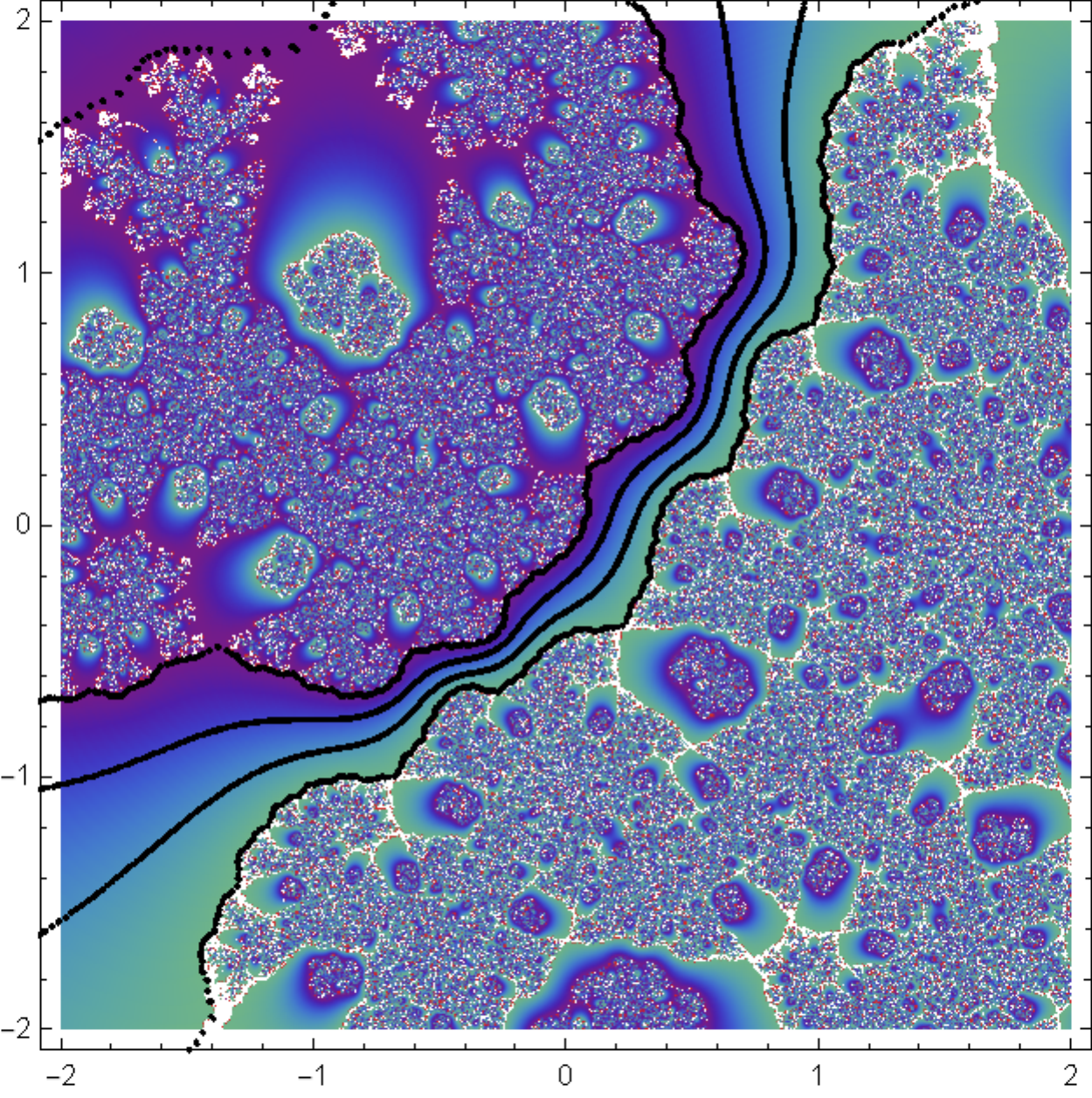}
	\caption{Julia and Fatou set for the map $f_{1.8,2i}(z)=\frac{z^3+1.8z^2+2i}{2iz^3-1.8z+1}.$ }
	\label{fig:1.8and2i}
\end{figure}

These two pictures may be misleading since they appear to contain a band rather than a ring. Due to the antipodal symmetry of our maps anything that occurs inside $\Delta$ (the unit disk) must also be present outside $\Delta$. We make this more rigorous in what follows.  

We note that there are critical points on the boundary of the forward invariant Herman ring shown in Fig \ref{fig:1.8and2i};  we show only the critical orbits and the antipodal critical points  that form the boundary of the Herman ring in Fig \ref{fig2:18and2i}. The local appearance of a right angle is consistent with the map $z \mapsto z^2$ taking perpendicular rays injectively onto a line, since the valency of each critical point is 2 by Thm \ref{R-H}.  There is set of rotation numbers of full measure occurring for a Herman ring that yield a critical point on the boundary. 
  On the other hand, the other two critical points, which end up under iteration inside the ring as shown in Fig \ref{fig:1.8and2i}, cannot be contained in the forward invariant ring.  This is due to the analytic conjugacy to an irrational rotation inside the ring \cite{Herman}, \cite{ Milnor1}.

\begin{figure}[h]
	\centering
	\includegraphics[width=9cm]{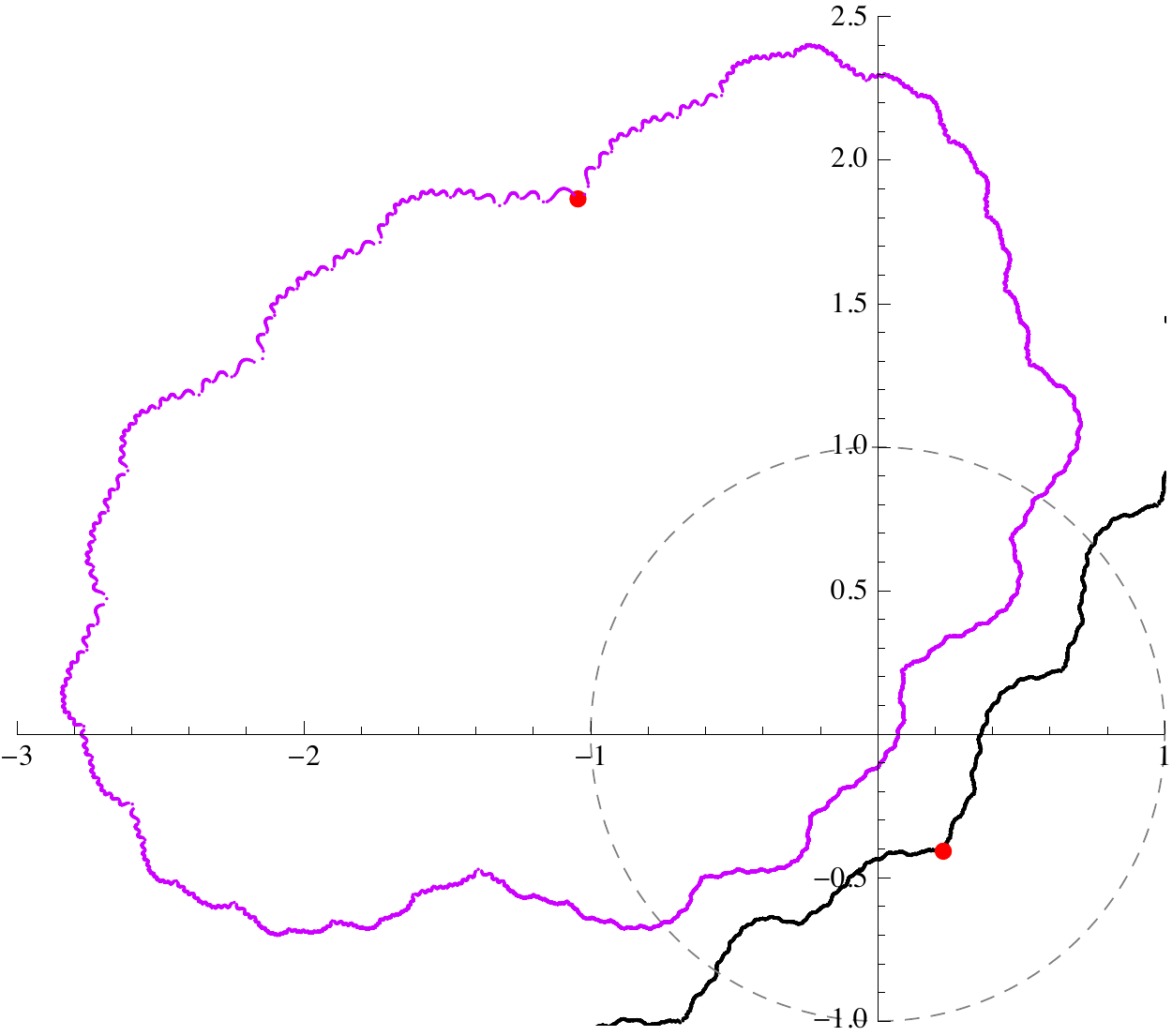}
	\caption{Two antipodal critical orbits and critical points (red) for the map $f_{1.8,2i}(z)=\frac{z^3+1.8z^2+2i}{2iz^3-1.8z+1}.$ }
	\label{fig2:18and2i}
\end{figure}

\newpage
\subsection{Topological properties of $F(f)$}

\begin{lemma}
	Let $f$ be an analytic map such that $f$ commutes with $\phi(z)=-1/\overline{z}$. Suppose there exists a continuous path, $\gamma:[0,1]\rightarrow F(f)\cap \mathit{Cl}(\Delta)$ such that $\gamma(0)=e^{i\theta}$ for some $\theta \in \mathbb{R}$ and $\gamma(1)=-\frac{1}{\overline{\gamma(0)}}=-e^{i\theta}$. Then there exists a loop $\tilde{\gamma}$ containing $\gamma$ such that $\tilde{\gamma}\subset F(f)$ and intersects $\Delta$ only on $\gamma$.
\end{lemma}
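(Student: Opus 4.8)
The plan is to close up $\gamma$ using the antipodal symmetry of $f$: the arc needed to complete the loop is simply the antipodal image $\phi\circ\gamma$, which automatically lands in the exterior of $\Delta$ and in $F(f)$. Thus $\tilde{\gamma}$ will be the concatenation of $\gamma$ with $\phi\circ\gamma$.

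First I would record the relevant properties of $\phi(z)=-1/\overline{z}$. It is an involution; it carries the open disk $\Delta=\{|z|<1\}$ onto its exterior $\{|z|>1\}\cup\{\infty\}$; and it preserves the unit circle $\partial\Delta$ setwise, acting there by $z\mapsto -z$. Since the endpoints $\gamma(0)=e^{i\theta}$ and $\gamma(1)=-e^{i\theta}$ both lie on $\partial\Delta$, this gives $\phi(\gamma(0))=\gamma(1)$ and $\phi(\gamma(1))=\gamma(0)$. Moreover, because $\gamma([0,1])\subseteq F(f)$ and $f$ commutes with $\phi$, Proposition \ref{Fatouu} yields $\phi(\gamma([0,1]))\subseteq F(f)$.

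Next I would set $\tilde{\gamma}\colon[0,1]\to\mathbb{C}_{\infty}$ by $\tilde{\gamma}(t)=\gamma(2t)$ for $t\in[0,1/2]$ and $\tilde{\gamma}(t)=\phi(\gamma(2t-1))$ for $t\in[1/2,1]$. The two pieces agree at $t=1/2$, since $\phi(\gamma(0))=\gamma(1)$, so $\tilde{\gamma}$ is continuous; and $\tilde{\gamma}(1)=\phi(\gamma(1))=\gamma(0)=\tilde{\gamma}(0)$, so $\tilde{\gamma}$ is a genuine loop. By construction its image contains $\gamma([0,1])$, and it lies in $F(f)$ because both arcs do (the first by hypothesis, the second by the previous paragraph). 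Finally, for the intersection condition I would observe that the second arc takes values in $\phi(\mathit{Cl}(\Delta))=\{|z|\ge 1\}\cup\{\infty\}$, hence is disjoint from the open disk $\Delta$; therefore $\tilde{\gamma}\cap\Delta\subseteq\gamma([0,1])$, which is exactly the claim (the endpoints $\gamma(0),\gamma(1)$ sit on $\partial\Delta$, not in the open disk, so they create no extra intersection).

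This argument is elementary, so there is no serious obstacle; the only care required is the bookkeeping with $\phi$ on $\partial\Delta$. The two points I would be most careful about are checking that $\phi$ sends the unit circle to itself by $z\mapsto -z$ (so that the endpoint matching $\phi(\gamma(0))=\gamma(1)$ holds and the concatenation actually closes), and noting that $\gamma$ itself may touch $\partial\Delta$ at interior parameter values, which is harmless since those points lie in the boundary rather than in the open disk $\Delta$. Intuitively, this lemma is what converts the ``band'' seen crossing the disk in the figures into a genuine loop encircling $\Delta$, via the antipodal identification.
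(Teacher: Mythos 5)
Your proof is correct and follows essentially the same route as the paper's: both close the path by concatenating $\gamma$ with its antipodal image $\phi\circ\gamma$, check the endpoint matching on $\partial\Delta$, and use the fact that $\phi$ preserves the Fatou set (your appeal to Proposition \ref{Fatouu} is exactly the justification the paper uses inline). Your version is slightly more careful about the intersection-with-$\Delta$ condition, but there is no substantive difference.
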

\begin{proof}
	Suppose $\gamma_1$ is such a path. Let $\phi(z)=-1/\overline{z}$. Define $\gamma_2=\phi \circ \gamma_1.$ Notice that $\gamma_2:[0,1]\rightarrow \mathbb{C}_{\infty}\setminus \Delta$. Also, $\gamma_2(0)=\phi \circ \gamma_1(0)=\gamma_1(1)$. Consider the path, $$\tilde{\gamma}(t)=\gamma_2 * \gamma_1(t)=\begin{cases}
	\gamma_1(2t)&0\leq t\leq 1/2\\
	\gamma_2(2(t-1/2)) &1/2 < t\leq 1.
	\end{cases}$$
	Then $\tilde{\gamma}$ is a loop in $\mathbb{C}_{\infty}$ since $\gamma_2(1)=\phi \circ \gamma_1(1)=\gamma_1(0).$ Also notice that $\tilde{\gamma}$ lies inside $F(f)$ since for any $z$ such that $\tilde{\gamma}(t)=z$ for some $t$, by construction, either $z$ or $-1/\overline{z}$ must be in the image of $\gamma_1$, which is contained in $F(f).$ As $f$ commutes with the antipodal map, if $-1/\overline{z}\in F(f)$, so is $z.$ Thus $\tilde{\gamma}$ is the desired loop.
\end{proof}
Therefore if $f$ is a rational map commuting with $\phi$ and $\Omega$ is a Fatou component for $f$ containing two antipodal points and a path between them in $\mathit{Cl}(\Delta)$, then $\Omega$ must also contain a path in $\mathbb{C}_\infty\setminus\Delta$ joining the antipodal points (which will be on $\partial\Delta$). This tells us that the bands of Fatou components shown in Fig \ref{fig:1.9and1.5i} and Fig \ref{fig:1.8and2i} must continue outside of the unit disk and are actually annuli. Figure \ref{fullring} shows the same map from Fig \ref{fig:1.9and1.5i} zoomed out so that the full ring can be seen. The coloration is the same in both figures. Due to computational error, while it looks like the gray curve is living in a sea of blue, it actually forms the boundary of the Herman ring. We present the method for generating these pictures in more detail in \cite{Rand}.

\begin{figure}[h]
	\centering
	\includegraphics[width=10cm]{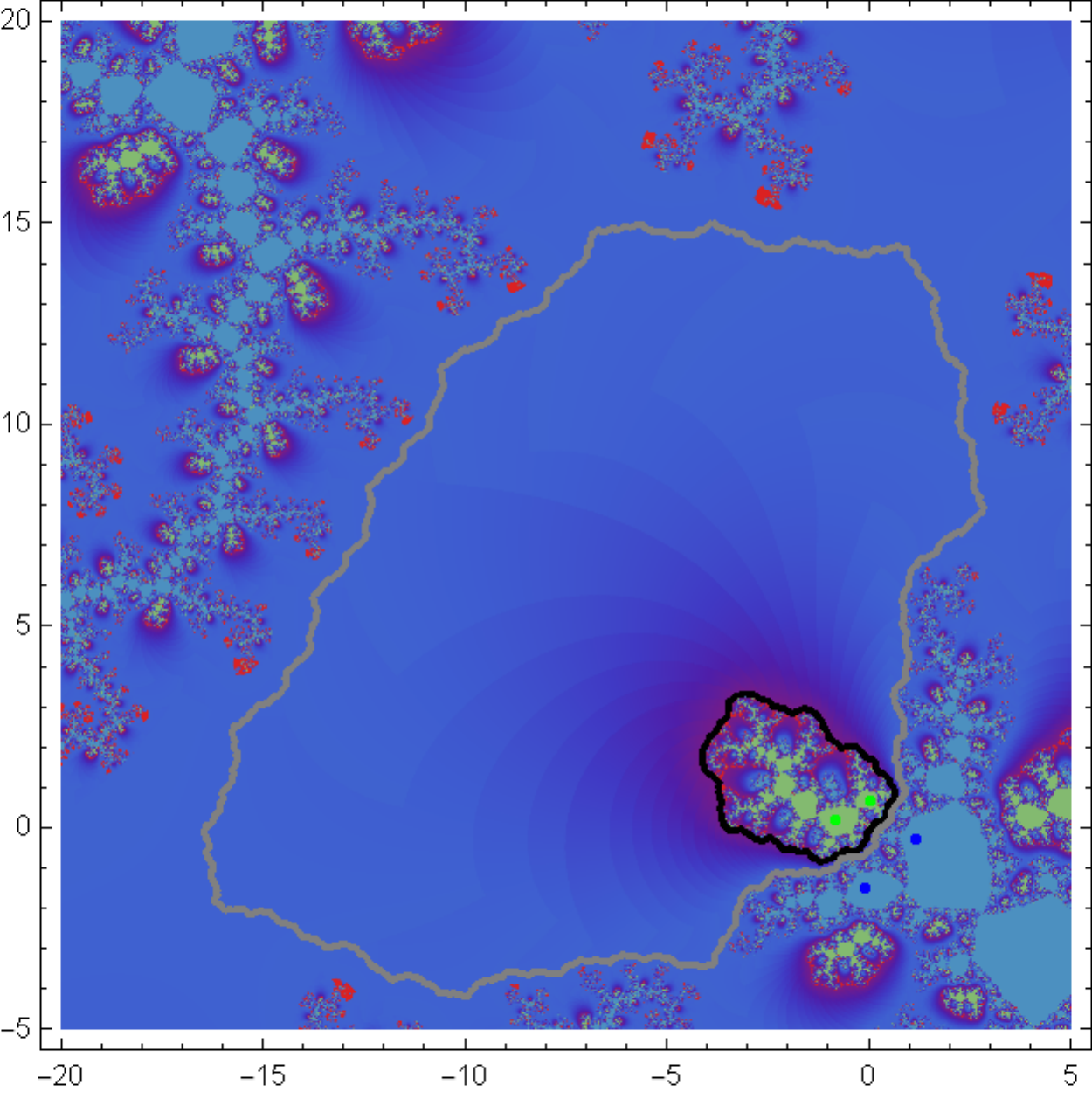}
	\caption{Julia and Fatou set for the map $f_{1.9,1.5i}(z)$ zoomed out to see the full ring.}
	\label{fullring}
\end{figure}

In the arguments above, we assume that a component $\Omega$ of the Fatou set, $F(f)$, contains antipodal points. The next lemma provides a partial converse. 
\begin{lemma}\label{H}
	For a map $f$ of the form (\ref{degree3}), if $F(f)$ has a forward invariant Herman ring $H$, then $\phi(H)=H$.
\end{lemma}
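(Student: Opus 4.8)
The plan is to obtain $\phi(H)=H$ by combining the antipodal symmetry of Fatou components recorded in Theorem \ref{antipodalFatou} with the degree-$3$ uniqueness of forward invariant Herman rings coming from Shishikura's bound, so that essentially no new work is needed beyond invoking results already in hand.

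First I would observe that any $f$ of the form (\ref{degree3}) is a special instance of the general form (\ref{general form}) and therefore commutes with the antipodal map $\phi(z)=-1/\overline{z}$. This places us exactly under the hypotheses of Theorem \ref{antipodalFatou}. Applying that theorem to the forward invariant Herman ring $H$ then gives that $W=\phi(H)$ is again a forward invariant Fatou component of the \emph{same type}; that is, $\phi(H)$ is itself a forward invariant Herman ring of $f$.

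The remaining step is a counting argument. Since $f$ has degree $3$, Theorem \ref{surgery} guarantees at most $d-2=1$ Herman ring, and in particular at most one forward invariant Herman ring. Were $\phi(H)$ distinct from $H$, the sets $H$ and $\phi(H)$ would constitute two different forward invariant Herman rings—each its own cycle of length one—contradicting this bound. Hence $\phi(H)=H$, as claimed.

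I expect no serious obstacle: the substantive content has already been absorbed into Theorem \ref{antipodalFatou} and Theorem \ref{surgery}. The only point demanding care is the bookkeeping in the final step, namely confirming that $H$ and $\phi(H)$ genuinely count as two \emph{distinct} forward invariant Herman rings when they are unequal, so that the degree-$3$ bound of a single such ring forces their coincidence.
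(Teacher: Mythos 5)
Your argument is correct and is essentially identical to the paper's own proof: both apply Theorem \ref{antipodalFatou} to conclude that $\phi(H)$ is another forward invariant Herman ring, then invoke Shishikura's bound (Theorem \ref{surgery}) to force $\phi(H)=H$ in degree $3$. No gaps.
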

\begin{proof}
	Since $H$ is a Herman ring, $\phi(H)$ must also be a Herman ring by Thm \ref{antipodalFatou}. But by Thm \ref{surgery}, since $\deg(f)=3$, $f$ can have at most one forward invariant Herman ring. Thus $H=\phi(H)$.
\end{proof}

It is also the case that if $F(f)$ contains a Herman ring, then it is forward invariant.  The next result is from \cite{preprint}.
\begin{theorem} \label{prop:bbm}  If $f$ is any degree $3$ rational map that commutes with $\phi$ and contains a Herman ring $H$, then $f(H)=H$.

\end{theorem}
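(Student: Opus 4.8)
The plan is to prove that the period of the Herman ring cycle must be $1$. By Thm \ref{Sullivan} the component $H$ is eventually periodic, and being a Herman ring it is in fact periodic: it lies in a cycle $\{H_0,\dots,H_{q-1}\}$ with $H_0=H$ and $f(H_j)=H_{j+1}$ (indices mod $q$). Since the first-return map $f^q|_{H_0}$ is a conformal automorphism conjugate to an irrational rotation, it has degree $1$; writing it as a composition of the proper maps $f\colon H_j\to H_{j+1}$ of degrees $d_j\ge 1$ with $\prod_j d_j=1$ forces each $d_j=1$, so every $f\colon H_j\to H_{j+1}$ is a conformal isomorphism, the cycle contains no critical points, and each $H_j$ is again a Herman ring. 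I want $q=1$. First I would note that $\phi$ commutes with $f$, so by Prop \ref{Fatouu} and Thm \ref{antipodalFatou} the antipodal image $\{\phi(H_0),\dots,\phi(H_{q-1})\}$ is another cycle of Herman rings; by Shishikura (Thm \ref{surgery}) a degree-$3$ map has at most one such cycle, so $\phi$ preserves this cycle. Because $\phi$ commutes with the cyclic shift induced by $f$, it acts on $\mathbb Z/q$ as a translation $j\mapsto j+k$, and $\phi^2=\mathrm{id}$ forces $2k\equiv 0\pmod q$.

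The engine would be a comparison of rotation numbers against the orientation behaviour of $\phi$. Transporting an orientation of the core curve of $H_0$ around the cycle by the orientation-preserving isomorphisms $f$ equips each $H_j$ with a coherent orientation, with respect to which all return maps $f^q|_{H_j}$ carry a single irrational rotation number $\rho$. An orientation-reversing homeomorphism between annuli multiplies the rotation number by $-1$ exactly when it reverses the core orientation, equivalently when it preserves the two complementary disks; it preserves the rotation number precisely when it interchanges them. The crucial topological input is that an orientation-reversing involution of a closed disk always has a fixed point, so since $\phi$ is fixed-point free, whenever $\phi$ maps a ring to itself it must interchange that ring's two complementary disks, and hence swap its two boundary circles.

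This settles the case $k=0$, in which every $H_j$ is $\phi$-invariant. Choosing the core of $H_j$ to be the $\phi$-invariant equator (on which $\phi$ acts as a free involution), its image under the projection $\mathfrak p$ is a loop whose preimage is connected, hence a noncontractible loop in $\mathbb{RP}^2$; a regular neighbourhood of it, namely $\mathfrak p(H_j)$, is therefore an essential M\"obius band. As the rings are disjoint this would give $q$ disjoint essential M\"obius bands in $\mathbb{RP}^2$, which is impossible since the complement of a single essential M\"obius band in $\mathbb{RP}^2$ is a disk. Hence $q=1$, which also recovers Lemma \ref{H} as a by-product.

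I expect the genuine obstacle to be the remaining case, where $k\neq 0$, so $\phi$ pairs the rings into $\phi$-orbits of size two (forcing $q$ even and $k=q/2$). Here $\phi$ never maps a ring to itself, so the disk fixed-point trick does not apply; and writing $\Psi=\phi\circ f^{q/2}$ one checks $\Psi(H_j)=H_j$ with $\Psi^2|_{H_j}=f^q|_{H_j}$, which only forces $\Psi$ to swap the sides of $H_j$ and preserve the core orientation, so the rotation-number comparison turns out consistent rather than contradictory. Likewise the postcritical constraint of Thm \ref{ringcycle}, with the four critical points (Thm \ref{R-H}) arranged in two antipodal pairs because $f$ commutes with $\phi$, can be met even when $q=2$, since a single critical orbit accumulating on $\partial H_0$ automatically accumulates on $\partial H_1=f(\partial H_0)$. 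Ruling this case out appears to require the finer global analysis of \cite{preprint}: I would either adapt their quasiconformal-surgery bookkeeping, or track explicitly how $f$ and $\phi$ permute the inner and outer sides of the rings around the cycle, exploiting the concrete cubic form (\ref{degree3}) and the fact that $\phi$ interchanges $\Delta$ with its exterior.
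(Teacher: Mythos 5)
The paper does not actually prove Theorem \ref{prop:bbm}: the result is quoted from \cite{preprint}, and the accompanying Remark only notes that a detailed proof appears there. So there is no in-paper argument to measure you against; what can be judged is whether your proposal stands alone, and it does not quite. Your reduction is sound as far as it goes: each $f\colon H_j\to H_{j+1}$ is a conformal isomorphism; Shishikura's bound (read, as the paper reads it, as ``at most one \emph{cycle} of Herman rings in degree $3$'') forces $\phi$ to permute the cycle; commutation with $f$ makes that permutation a translation $j\mapsto j+k$ with $2k\equiv 0\pmod q$; and in the case $k=0$ your M\"obius-band argument works. Indeed, $\phi|_{H_j}$ commutes with the irrational rotation $f^q|_{H_j}$, so in the uniformizing coordinate it must have the form $z\mapsto cr/\bar z$, which swaps the two ends and preserves the core circle, acting freely on it; each $\mathfrak{p}(H_j)$ then contains a one-sided simple closed curve, and two disjoint one-sided curves cannot coexist in $\mathbb{RP}^2$ since their mod-$2$ intersection number is $1$. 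That gives $q=1$ in this case.

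The genuine gap is exactly the case you flag and leave open: $q$ even with $\phi(H_j)=H_{j+q/2}$. This is not a removable loose end but the substantive half of the theorem, and nothing in your toolkit excludes it. Taking $H_0=\{a<|z|<b\}$ with $b<1$ and $H_1=\phi(H_0)=\{1/b<|z|<1/a\}$ shows that two disjoint antipodal annuli are a topologically consistent configuration; the anti-conformal map $g=\phi\circ f$ fixes $H_0$ with $g^2=f^2|_{H_0}$ an irrational rotation, and $g$ of the form $z\mapsto cr/\bar z$ realizes this; and, as you verify yourself, the rotation-number and postcritical-set constraints (Thm \ref{ringcycle}, Thm \ref{R-H}) are also satisfiable with $q=2$. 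Ruling this case out requires additional dynamical input specific to antipode-preserving cubics --- essentially the analysis of \cite{preprint} --- which your proposal defers to rather than supplies. As written, then, this is a partial proof: complete when $\phi$ fixes each ring of the cycle, but with the half-period-shift case explicitly unproved, so the theorem is not established.
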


\begin{remark}  A detailed proof of Thm \ref{prop:bbm} appears in \cite{preprint}.  There is an additional claim appearing in (\cite{preprint}, Thm 7.1)  that says that $H$ must separate $0$ and $\infty$ under the hypotheses of Thm \ref{prop:bbm};  however that statement is not proved.  Moreover we have an example of the form (\ref{degree3}), which is not the form studied in \cite{preprint}, where it seems to be false since $0, \infty \in H$ appears to hold.  This is illustrated in Fig \ref{fig:0inf}.
\end{remark}

\begin{figure}[h]
	\centering
	\includegraphics[width=10cm]{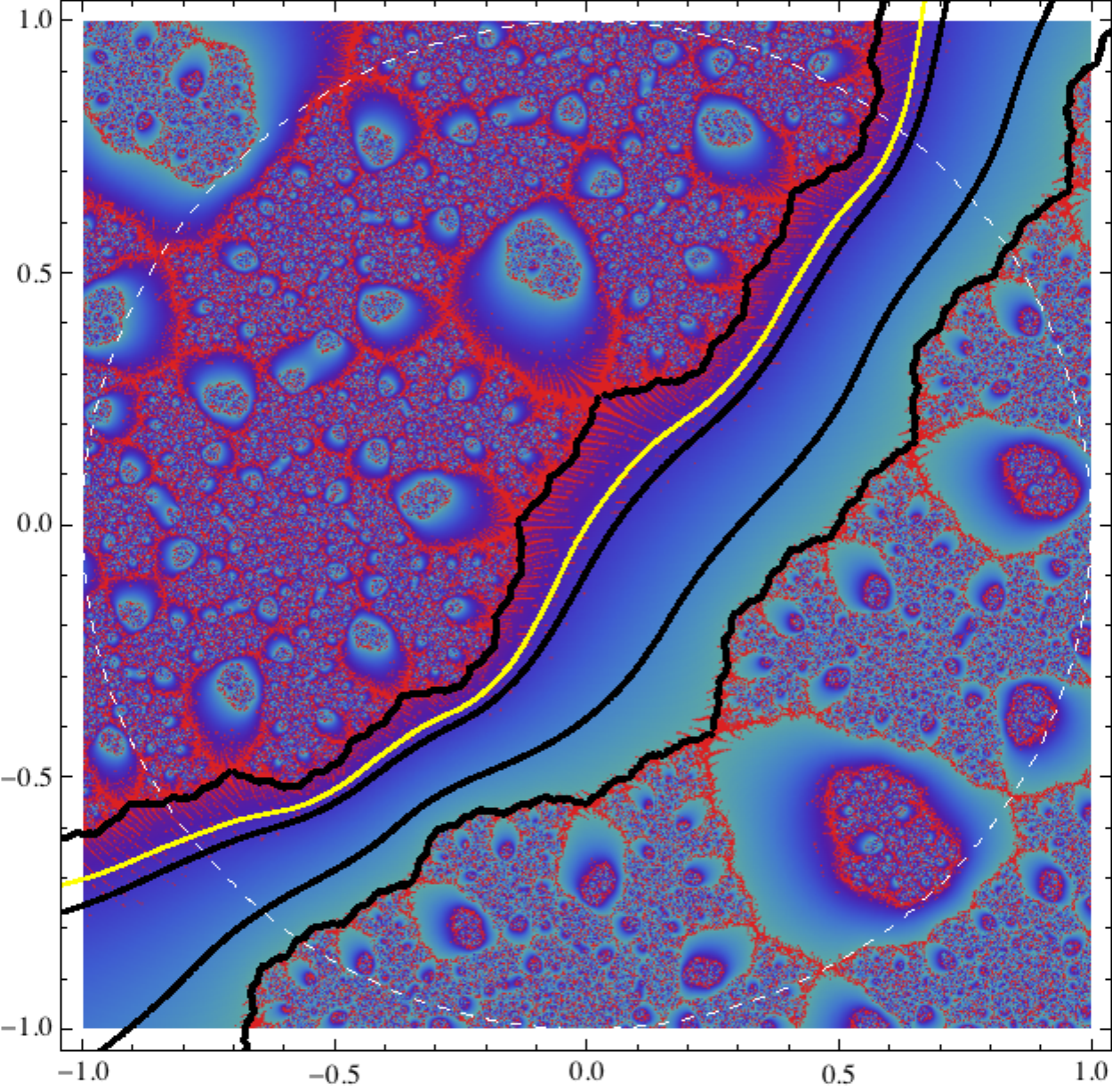}
	\caption{Julia and Fatou set for the map $f_{2,2.3i}(z)$ with orbits of the critical points in black and the orbit of $0$ between 2 critical orbits, in yellow.}
	\label{fig:0inf}
\end{figure}

We continue to explore the possible dynamics for our maps.

\begin{theorem}
	Suppose $f$ is an analytic map of the form (\ref{degree3}) and $F(f)$ has a (forward invariant) Herman ring $H$. If $F(f)$ also has a periodic component $U$ associated with a (super)attractive or rationally indifferent $k-$cycle $B=\{\zeta,f(\zeta),...,f^{k-1}(\zeta)\}$, then exactly one of the following holds:
	\begin{enumerate}
		\item $B=\phi(B)$; in this case $k$ must be even and $B$ is the only (super)attracting or rationally indifferent cycle for $f$.
		\item $B\cap \phi(B)=\emptyset$; in this case, there are exactly two (super)attracting or rationally indifferent cycles for $f$ and they are antipodal to each other on $\mathbb{C}_{\infty}$.
	\end{enumerate}
\end{theorem}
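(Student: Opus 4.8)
The plan is to combine the antipodal symmetry of $f$ with a critical-point count, using that $\deg f = 3$ gives exactly four critical points with multiplicity by Thm \ref{R-H}. Two facts drive the argument: the involution $\phi(z)=-1/\bar z$ satisfies $\phi^2=\mathrm{id}$ and is fixed-point free (since $\phi(z)=z$ forces $|z|^2=-1$); and since $f$ commutes with $\phi$, the critical set $C$ is $\phi$-invariant, so critical points occur in antipodal pairs $\{c,\phi(c)\}$ with $c\neq\phi(c)$, while $\phi$ sends Fatou components to Fatou components and permutes cycles of the same period and type. First I would settle the dichotomy: if $\zeta\in B\cap\phi(B)$ then $\zeta$ lies on a single forward orbit, so the cycles coincide, giving either $B=\phi(B)$ or $B\cap\phi(B)=\emptyset$, with $\phi(B)$ of the same type as $B$ by Thm \ref{antipodalFatou}. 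When $B=\phi(B)$, the map $\phi$ restricts to a fixed-point-free involution of the finite set $B$, which thus splits into pairs $\{w,\phi(w)\}$, forcing $k=|B|$ to be even.

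The heart of the proof is to show $H$ consumes a full antipodal pair of critical points that no (super)attracting or parabolic basin can use. By Lemma \ref{H} and Thm \ref{prop:bbm}, $f(H)=H$ and $\phi(H)=H$, hence $\phi(\partial H)=\partial H$ and $\partial H\subseteq J(f)$. By Thm \ref{ringcycle}, $\partial H\subseteq \mathit{Cl}(C^+)=\bigcup_{c\in C}\mathit{Cl}(\{f^n(c):n\geq 0\})$, the closure distributing over the finite union, so some critical point $c$ has orbit closure meeting $\partial H$. Such a $c$ cannot lie in a (super)attracting or parabolic basin, since there its orbit converges to the cycle and its closure stays in $F(f)$, disjoint from $\partial H\subseteq J(f)$. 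Its antipode $\phi(c)$ is a distinct critical point whose orbit closure meets $\phi(\partial H)=\partial H$, so it is excluded from those basins as well; thus $\{c,\phi(c)\}$ is a pair unavailable to every (super)attracting or parabolic cycle.

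It then remains to count. By Thm \ref{attracting} each (super)attracting or parabolic cycle carries a critical point in its immediate basin, distinct cycles have disjoint immediate basins and hence disjoint critical points, and a self-antipodal cycle has a $\phi$-invariant immediate basin containing an entire antipodal pair. Since the pair feeding $\partial H$ is disjoint from all of these and only four critical points exist, such cycles use at most $4-2=2$ critical points. In Case (1) the self-antipodal $B$ already uses a full pair, exhausting the budget and forcing $B$ to be unique. In Case (2) the disjoint cycles $B$ and $\phi(B)$ use one critical point each, again totalling two and leaving no room for a third cycle; the two are antipodal by construction.

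I expect the main obstacle to be making ``$H$ consumes a full antipodal pair'' fully rigorous, especially the bookkeeping with multiplicities: if $f$ has fewer than four geometric critical points the hypotheses become harder to satisfy, and one must check that the statement is then vacuous or still forced. Also delicate is the clean dichotomy between critical points whose orbit closures meet $\partial H\subseteq J(f)$ and those captured by basins inside $F(f)$. A further point, not needed for the count but worth recording, is the topological fact that the fixed-point-free $\phi$ must interchange the two complementary disks of the annulus $H$, which one can extract from a Brouwer fixed-point argument and which confirms that the two boundary circles are genuinely swapped.
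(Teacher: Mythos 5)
Your proposal is correct and follows essentially the same route as the paper's proof: both arguments hinge on the Riemann--Hurwitz count of four critical points, use Thm \ref{ringcycle} to show the boundary of the $\phi$-invariant ring $H$ absorbs an antipodal pair of critical points unavailable to any basin, apply Thm \ref{attracting} to charge each (super)attracting or rationally indifferent cycle a critical point, and derive evenness of $k$ in the self-antipodal case from $\phi$ being a fixed-point-free involution. Your version is slightly more careful where the paper asserts outright that a critical orbit is \emph{contained} in $\partial H$, but the skeleton of the argument is identical.
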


\begin{proof}
	By Thm \ref{ringcycle},  the closure  $\mathit{Cl}(C^+(f))$ must contain $\partial H$. Thus some critical point $c_1$, of $f$ must have infinite forward orbit contained in $\partial H$. It immediately follows that the critical point, $c_2=\phi(c_1)$ must also have infinite forward orbit contained in $\partial H$ since $\phi\circ f^m=f^m \circ \phi$ for all $m \in \mathbb{N}$. Since $\deg(f)=3$, by Thm \ref{R-H}, $f$ has at most $4$ distinct critical points, so there are at most two remaining critical points.  \\
	If $U$ is a periodic component of $F(f)$ associated with a (super)attracting or rationally indifferent $k$-cycle, $B$, there is a critical point, $c_3$ in the immediate attracting basin (where $c_3\neq c_1$ and $c_3\neq c_2$). If there is another (super)attracting or rationally indifferent cycle for $f$, the only remaining critical point, $c_4=\phi(c_3)$ must be attracted to it.
	By Prop \ref{antipodal fixed points}, the second cycle must be $\phi(B)$ and $B\cap \phi(B)=\emptyset$.
	
	If there is a pair of points $\zeta,\phi(\zeta)\in B$, then $\phi(\zeta)=f^m(\zeta)$, for some $1\leq m\leq k-1$. Since $\phi\circ f^m=f^m\circ\phi$ for all $m\in\mathbb{N}$, $B=\phi(B)$. In this case, both remaining critical points $c_3$ and $c_4=\phi(c_3)$ must be attracted to the same cycle. As we have accounted for all of the critical points, no other (super)attracting or rationally indifferent cycles can occur. For each $\zeta\in B$, $B$ must also contain $\phi(\zeta)$. Since $\phi$ has no fixed points and $\phi(\phi(\zeta))=\zeta$, points in $B$ come in pairs. Thus $B$ has an even period.
\end{proof}

In the first case, $B$ will collapse to a $k/2$-cycle in $\mathbb{RP}^2$ and in the second case, $B$ and $\phi(B)$ will collapse to a single $k$-cycle in $\mathbb{RP}^2$.

In the previous sections, we primarily view maps of the form (\ref{degree3}) as analytic functions on $\mathbb{C}_{\infty}$; however their form is chosen so that $\tilde{f}$ can be viewed as a dianalytic map on $\mathbb{RP}^2$. So, it is natural to wonder about the structure of the Fatou components when projected onto $\mathbb{RP}^2$. In our examples, if there is a Herman ring $H$ it is forward invariant by Thm \ref{prop:bbm}; thus in the arguments that follow, we assume that $f$ is a map of the form (\ref{degree3}) and there is exactly one forward invariant Herman ring in $F(f)$. By Lemma \ref{H}, $\phi(H)=H$. Either $H$ cuts through the unit disk ($\Delta$) and contains pairs of antipodal points but $\partial\Delta \nsubseteq H$ or $\partial\Delta \subset H$. In either case when this component of the Fatou set is projected, there is a M\"obius strip contained in $\mathfrak{p}(H)$ on $\mathbb{RP}^2$. In $\mathbb{C}_{\infty}$, notice that $\mathbb{C}_{\infty}\setminus H$ consists of two disjoint simply connected  components and the center curve of $H$  projects to a generator of the fundamental group on $\mathbb{RP}^2$, $\pi_1(\mathbb{RP}^2)\cong \mathbb{Z}/2\mathbb{Z}$. Any preimage of $H$ (besides $H$ itself) cannot intersect $H$, therefore any preimage $\tilde{H}\neq H$ lies entirely in one of the components and the center curve of $\tilde{H}$ is homotopic to the identity. So, each preimage $\tilde{H}$ maps to an annulus $\mathfrak{p}(\tilde{H})$ in $\mathbb{RP}^2$ and the forward invariant component, $\mathfrak{p}(H)$, is non-orientable. We have outlined a proof of the following:
\begin{prop}
	
Let $f$ be an analytic map of the form (\ref{degree3}). If $F(f)$ contains a Herman ring, $H$, then $\mathfrak{p}(H)$ contains a M\"obius strip. Additionally for each preimage $\tilde{H}$ of $H$ such that $\tilde{H}\neq H$, $\mathfrak{p}(\tilde{H})$ is an annulus in $\mathbb{RP}^2$.
\end{prop}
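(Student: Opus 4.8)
The plan is to present $\mathfrak{p}$ as the quotient of $\mathbb{C}_{\infty}$ by the involution $\phi(z)=-1/\overline{z}$ and to read off the topology of $\mathfrak{p}(H)$ and of $\mathfrak{p}(\tilde H)$ from the way $\phi$ acts on $H$ and on its complement. First I would record three elementary properties of $\phi$: it is an involution ($\phi^2=\mathrm{id}$), it is fixed-point free (the equation $-1/\overline z=z$ forces $|z|^2=-1$, impossible), and, being antiholomorphic, it reverses orientation. Hence $\mathfrak{p}\colon\mathbb{C}_{\infty}\to\mathbb{RP}^2$ is an honest two-sheeted covering with deck group $\langle\phi\rangle$, and since $H$ is forward invariant (Thm \ref{prop:bbm}) and $\phi(H)=H$ (Lemma \ref{H}), the set $H$ is $\phi$-saturated, so $\mathfrak{p}|_H\colon H\to\mathfrak{p}(H)$ is a two-sheeted covering onto a connected surface.

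For the first assertion I would classify $\mathfrak{p}(H)$ directly. As the covering $\mathfrak{p}|_H$ is two-to-one, $\chi(\mathfrak{p}(H))=\tfrac12\chi(H)=0$, and $\mathfrak{p}(H)$ is a connected surface without boundary covered by an annulus, hence of finite type with at most two ends. The only such surfaces with Euler characteristic $0$ are the open cylinder and the open M\"obius band, and the quotient is orientable exactly when the deck transformation preserves orientation. Since $\phi$ reverses orientation, $\mathfrak{p}(H)$ is the open M\"obius band; in particular it contains a M\"obius strip, and the core curve of the annulus $H$ double covers the core of this band, so its image generates $\pi_1(\mathbb{RP}^2)\cong\mathbb{Z}/2\mathbb{Z}$.

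For the preimages, write $\mathbb{C}_{\infty}\setminus\overline H=V_0\sqcup V_\infty$ for the two complementary domains, which are simply connected. I would first check that $\phi$ interchanges $V_0$ and $V_\infty$: it permutes them because $\phi(\overline H)=\overline H$, and it cannot fix $V_0$ setwise, for then $\phi|_{V_0}$ would be a free involution on a disk and multiplicativity of $\chi$ under the resulting double cover would force $1=\chi(V_0)=2\,\chi(V_0/\phi)$, impossible for integer $\chi$. Now let $\tilde H\neq H$ be a Fatou component with $f^{\,n}(\tilde H)=H$. Since Fatou components are disjoint and $\partial H\subset J(f)$, the connected open set $\tilde H$ misses $\overline H$ and hence lies in a single $V_i$, say $\tilde H\subset V_0$; then $\phi(\tilde H)\subset V_\infty$ is disjoint from $\tilde H$, so $\tilde H$ contains no antipodal pair, $\mathfrak{p}|_{\tilde H}$ is injective, and $\mathfrak{p}(\tilde H)\cong\tilde H$. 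As $\tilde H$ sits inside the disk $V_0$, whose image is an embedded disk, the core of $\mathfrak{p}(\tilde H)$ is null-homotopic in $\mathbb{RP}^2$.

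It remains to see that $\tilde H$ is itself doubly connected, and this is the step I expect to be the main obstacle. Applying Thm \ref{R-H} to the proper map $f^{\,n}\colon\tilde H\to H$ gives $\chi(\tilde H)=(\deg f^{\,n}|_{\tilde H})\,\chi(H)-R=-R$, where $R$ counts the ramification inside $\tilde H$; so $\tilde H$ is an annulus precisely when $f^{\,n}|_{\tilde H}$ is unbranched. The orientable-with-contractible-core conclusion above is robust, but the word \emph{annulus} can fail: if a critical point lies in $\tilde H$ then $\chi(\tilde H)<0$ and $\mathfrak{p}(\tilde H)$ is a planar domain of higher connectivity. This actually occurs for maps of the form (\ref{degree3}) in which two of the critical points iterate into the ring, as in Fig \ref{fig:1.8and2i}. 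I would therefore either restrict the claim to critical-point-free preimages or phrase the general conclusion as ``$\mathfrak{p}(\tilde H)$ is an orientable planar domain with null-homotopic boundary curves''; in the regime where the two free critical points lie on $\partial H$ or in attracting basins (Fig \ref{fig:1.9and1.5i}) every preimage is unbranched and the statement holds verbatim.
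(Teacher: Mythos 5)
Your argument follows the same skeleton as the one outlined in the paper --- $\phi(H)=H$ via Lemma \ref{H} and Thm \ref{prop:bbm}, the complement of $H$ splitting into two pieces interchanged by $\phi$, and any other preimage component lying in one of them so that $\mathfrak{p}$ is injective on it --- but you supply the justifications the paper only gestures at: the computation $\chi(\mathfrak{p}(H))=\tfrac12\chi(H)=0$ combined with the orientation-reversal of $\phi$ to pin down the open M\"obius band, and the parity-of-$\chi$ argument showing $\phi$ cannot preserve a complementary disk (which matters, since one cannot simply say ``$0$ and $\infty$ lie in different complementary components'': the paper's Fig \ref{fig:0inf} shows both may lie in $H$). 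These are genuine improvements in rigor over the text, which simply asserts that a M\"obius strip appears and that the complementary components are swapped.

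Your final caveat is the most valuable part of the proposal, and it is correct: the paper's outline tacitly treats every preimage component $\tilde H$ as an annulus (it refers to ``the center curve of $\tilde H$''), but by Thm \ref{R-H} applied to the proper map $f^{n}\colon\tilde H\to H$ this holds only when no critical point lies in the chain of components leading to $H$. For the paper's own Example 2 (Fig \ref{fig:1.8and2i}) the critical points $c_2$ and $c_4$ iterate into $H$, hence lie in iterated preimage components distinct from $H$; those components have $\chi<0$ and project homeomorphically to planar domains of connectivity at least $3$, not annuli. So the second assertion of the proposition fails as stated for that example, and your proposed repair --- restricting to unbranched preimages, or weakening the conclusion to ``orientable planar domain whose boundary curves are null-homotopic in $\mathbb{RP}^2$'' --- is the right fix. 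The first assertion, about $\mathfrak{p}(H)$ containing a M\"obius strip, is unaffected.
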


\subsection{How to Identify a Herman Ring for Dianalytic Maps of Degree 3} 

Suppose we have a Fatou component $\Omega$; we would like to determine what type it is. We know that $\Omega$ is periodic or eventually periodic. For simplicity, assume that $\Omega$ itself is periodic with period $n$, so $\Omega$ will be forward invariant under iteration of $R^n$. Then, by Thm \ref{Classification} $\Omega$ must be
an attracting component,
a super-attracting component,
a parabolic component,
a Siegel Disk,
or a Herman Ring. 
In the first three cases $\mathit{Cl}(\Omega)$ contains a fixed point of $R^n$ and thus we see a nonrepelling cycle of period $n$. In the last two cases $\Omega$ is part of a periodic cycle of Siegel disks or Herman rings. The following results are useful for determining what types of Fatou components we have for maps of the form (\ref{degree3}).

\begin{theorem}\label{Julia}\cite{Beardon}
	For any rational map $R$ of $\deg\geq 2$, either $J(R)=\mathbb{C}_{\infty}$ or $J$ has empty interior. 
\end{theorem}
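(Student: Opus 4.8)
The plan is to prove the dichotomy in its contrapositive form: assuming $J(R)$ has nonempty interior, I would show that $J(R)=\mathbb{C}_{\infty}$. So suppose there is a nonempty open set $U\subseteq J(R)$, and fix a point $z_0\in U$. The whole argument then rests on a single normality input (Montel's theorem) together with the complete invariance already recorded in Theorem \ref{invariant}.

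The engine is the \emph{blowing-up property} of the Julia set, which I would establish first. I claim that the forward orbit $\bigcup_{n\geq 0}R^n(U)$ omits at most two points of $\mathbb{C}_{\infty}$. To see this, argue by contradiction: if the union omitted three distinct values $w_1,w_2,w_3$, then every iterate $R^n$ would avoid $\{w_1,w_2,w_3\}$ on $U$, and Montel's criterion for families of meromorphic maps omitting three common values would force $\{R^n\}$ to be normal on $U$. That would place $U\subseteq F(R)$, contradicting $U\subseteq J(R)$. Hence at most two points of the sphere can be missed.

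Next I would feed this into complete invariance. By Theorem \ref{invariant}, $J(R)$ is completely invariant, so $R^n(U)\subseteq R^n\bigl(J(R)\bigr)=J(R)$ for every $n\geq 0$, whence $\bigcup_{n\geq 0}R^n(U)\subseteq J(R)$. Combining with the blow-up claim, $J(R)$ contains all of $\mathbb{C}_{\infty}$ except for at most two points. Since $J(R)$ is closed and a subset of $\mathbb{C}_{\infty}$ missing at most two points is dense, I conclude $J(R)=\mathbb{C}_{\infty}$. Reading this contrapositively gives the stated dichotomy: if $J(R)\neq\mathbb{C}_{\infty}$, then $J(R)$ can contain no open set and therefore has empty interior.

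The step I expect to be the main obstacle is justifying the sharp bound of \emph{two} omitted points, and this is exactly where the hypothesis $\deg R\geq 2$ enters. For $\deg R\geq 2$ the exceptional set of points with finite total backward orbit has at most two elements, and $J(R)$ is nonempty; these facts are what make ``at most two omitted points'' the correct threshold and what rule out the degenerate behavior one sees for M\"obius maps. Apart from this bookkeeping, the only genuine analytic ingredient is Montel's theorem, so once the blow-up property is in hand the remainder is a short topological deduction from closedness and density.
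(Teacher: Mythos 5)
Your argument is correct and is essentially the standard proof from Beardon that the paper cites without reproducing: Montel's three-omitted-values criterion gives the blowing-up property on an open subset of $J(R)$, complete invariance keeps the forward images inside $J(R)$, and closedness plus density finishes the dichotomy. The only small imprecision is your closing remark locating the role of $\deg R\geq 2$ in the two-point bound — that bound already falls out of Montel's criterion for any open set meeting $J$; the degree hypothesis matters only for the surrounding theory (e.g., ensuring $J\neq\emptyset$ and excluding the degenerate M\"obius case), not for this particular step.
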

Let $(X,\mathfrak{F},\mu)$ be a probability space (i.e., a measure space with $\mu(X)=1$).
\begin{defn}\cite{JHawk}
	 Let $f$ be a nonsingular map, (i.e., $\forall S \in \mathfrak{F}$, $f^{-1}(S)\in \mathfrak{F}$ and $\mu( f^{-1}(S))=0$ if and only if $\mu(S)=0$), $f$ is \emph{ergodic} if for all $S\in \mathfrak{F}$ such that $S$ is backward invariant, $\mu(S)=0$ or $\mu(S)=1$.   
\end{defn}
Let $m_2$ denote the normalized surface area measure on $\mathbb{C}_{\infty}$.

\begin{theorem}\label{Thm:Ergodic}\cite{McMullen}
	For any rational map $R$ on $\mathbb{C}_{\infty}$, either
	\begin{enumerate}
		\item $J=\mathbb{C}_{\infty}$ and the action of $R$ on $\mathbb{C}_{\infty}$ is $m_2$-ergodic, or 
		\item the spherical distance $d(R^n(x),\mathit{Cl}(C^+))\rightarrow 0$ for $m_2$ almost every $x$ in $J$ as $n \rightarrow \infty$ .
	\end{enumerate}
\end{theorem}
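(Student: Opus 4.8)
The plan is to prove the dichotomy in its contrapositive form. Write $P = \mathit{Cl}(C^+)$ for the postcritical set and set $E = \{x \in J : d(R^n(x), P) \not\to 0\}$. Conclusion (2) is exactly the statement $m_2(E) = 0$, so it suffices to show that $m_2(E) > 0$ forces $J = \ci$ together with $m_2$-ergodicity, i.e. conclusion (1). The whole argument rests on one analytic mechanism: on $\ci \setminus P$ every inverse branch of every iterate $R^n$ is a well-defined univalent map, because the critical values of all iterates lie in $P$ and hence $R$ restricts to a covering of $\ci \setminus P$ by $\ci \setminus R^{-n}(P)$; univalent maps then obey the Koebe distortion theorem.

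First I would establish a blow-up lemma. For $x \in E$ fix $r>0$ and a sequence $n_k \to \infty$ with $d(R^{n_k}(x), P) \ge r$; after passing to a subsequence I may assume $y_k := R^{n_k}(x) \to y_\infty$ and work on the fixed ball $B = B(y_\infty, r/2) \subset \ci \setminus P$. Let $g_k$ be the inverse branch of $R^{n_k}$ on $B$ with $g_k(y_k) = x$. Since the $g_k$ omit the set $P$ (which has at least three points), they form a normal family; a non-constant univalent subsequential limit would make $\{R^n\}$ normal at $x$, contradicting $x \in J$, so in fact $g_k \to x$ locally uniformly and $\operatorname{diam} g_k(B) \to 0$. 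By Koebe, each $g_k$ distorts $m_2$-ratios on a slightly smaller concentric ball by a factor bounded independently of $k$.

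Next comes the density argument. The set $E$ is completely invariant (it is a tail condition and $J$ is completely invariant by Thm \ref{invariant}), so $m_2(E) > 0$ forces $m_2(J) > 0$, and $E$ has a Lebesgue density point $x_0 \in E$; arranging first a positive-measure subset of $E$ on which a single radius $r$ works, I may assume $x_0$ enjoys the returns above. Transporting the density of $E$ at $x_0$ through the branches $g_k$ and invoking bounded distortion, the density of $E$ inside the fixed ball $B$ tends to $1$, whence $E \supseteq B$ up to an $m_2$-null set. Because $E \subseteq J$, the open Fatou set meets $B$ in measure zero and is therefore disjoint from $B$, so $J$ has nonempty interior and Thm \ref{Julia} gives $J = \ci$. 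Finally, since $J = \ci$, the forward iterates of the open ball $B$ cover $\ci$ minus finitely many points, and complete invariance of $E$ together with nonsingularity of $R$ propagates $E \supseteq B \pmod{0}$ to $m_2(E) = 1$. Running the identical blow-up, density, and spreading argument on an arbitrary completely invariant set $S$ with $m_2(S) > 0$ — using that a.e.\ point of $S$ now lies in $E$ and so has returns off $P$ — yields $m_2(S) = 1$, which is ergodicity.

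The main obstacle is the blow-up lemma, where three requirements must be kept compatible along one subsequence: the inverse branches must be univalent on a ball of definite size (needing the orbit to stay a definite distance from $P$, and using that $P$ carries all critical values), their images must genuinely shrink to the basepoint (using non-normality at Julia points to exclude a univalent limit), and they must have uniformly bounded distortion (Koebe). Managing these simultaneously, and doing the measure-theoretic bookkeeping so that the uniform radius $r$ is available on a positive-measure set, is the delicate part; by contrast the passage from ``density one on a ball'' to ``full measure'' via the covering property of $J = \ci$ is comparatively routine.
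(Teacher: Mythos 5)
The paper does not actually prove this statement: it is quoted from \cite{McMullen} (it is Theorem 3.9 of \emph{Complex Dynamics and Renormalization}), and the authors say explicitly that such background results are stated without proof. So there is no in-paper argument to compare against; what you have written is, in outline, a faithful reconstruction of McMullen's own proof --- univalent inverse branches off the postcritical set, shrinking of those branches at Julia points, Koebe distortion transporting a Lebesgue density point of the bad set $E$ (or of an arbitrary completely invariant set $S$) to density one on a fixed ball, the empty-interior dichotomy of Thm \ref{Julia} to get $J=\mathbb{C}_{\infty}$, and the blow-up property of open sets meeting $J$ to spread full density to full measure. Your measure-theoretic bookkeeping is handled correctly: reducing to a single radius $r$ on a positive-measure subset, complete invariance of $E$ as a tail event, and applying the distortion bound to the \emph{complement} of $E$ so as to obtain density $1$ rather than merely positive density.

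The one step that does not work as written is the shrinking lemma. You assert that a non-constant univalent subsequential limit $g$ of the branches $g_k$ ``would make $\{R^n\}$ normal at $x$, contradicting $x\in J$.'' What such a limit actually yields is that the \emph{subsequence} $R^{n_{k_j}}$ converges (to $g^{-1}$) on a neighborhood $U$ of $x$; normality of a subsequence does not contradict non-normality of the full family, so this is not yet a contradiction with $x\in J$. The standard repair is either (i) density of repelling periodic points in $J$: pick a repelling $p\in U\cap J$ of period $q$; along the $n_{k_j}$ lying in a fixed residue class mod $q$ one has $|(R^{n_{k_j}})'(p)|\to\infty$, which is incompatible with locally uniform convergence to a holomorphic limit; or (ii) the blow-up property: $R^n(U')$ eventually contains $\mathbb{C}_{\infty}$ minus the exceptional set and hence meets $P$, contradicting $R^{n_{k_j}}(U')\subset B\subset\mathbb{C}_{\infty}\setminus P$. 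Either patch is short, but one of them is needed. A second, minor point: your parenthetical claim that $P$ always has at least three points is false (for $z\mapsto z^d$ one has $P=\{0,\infty\}$), so the case where $P$ has at most two points must be disposed of separately; this is easy, since such maps are conjugate to $z^{\pm d}$, where $m_2(J)=0$ and conclusion (2) holds vacuously.
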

\subsection{Typical Examples} We discuss two examples, the first map has an attractive period $2$ cycle and a Herman ring and the second is a map where the only Fatou components are a Herman ring and its preimages. The two examples in this subsection are the same as those from Section \ref{section:ourmaps} and are typical maps of the form (\ref{degree3}) possessing Herman rings. In previous sections, we claim without proof that these maps have Fatou components that are Herman rings. In this section, for each type of example we justify the belief that the region appearing to be a Herman ring is actually a Herman ring, giving partial arguments relying on experimental results. Since these examples are typical of the maps we study, these arguments translate to other examples as well. 

\textbf{Example 1:}  Consider the map: $\ds f_{1.9,1.5i}(z)=\frac{z^3+1.9z^2+1.5i}{1.5iz^3-1.9z+1}.$ The Julia set ($J$) and Fatou set ($F(f_{1.9,1.5i})$) for this map can be seen in Figures \ref{fig:1.9and1.5i}, \ref{fullring}, and \ref{fig:random}.
As with many of our maps, it is clear that we have antipodal attractive orbits. This can be seen by calculating the fixed points of $R^n$ and finding the points such that the multiplier is less than $1$, or simply by following the orbit of critical points (by Thm \ref{attracting}). The map $f_{1.9,1.5i}$ has two antipodal cycles of period two, each attracting a critical point. The closures of the orbits of the other two critical points appear to be forming Jordan curves. While it is theoretically possible that these are actually periodic orbits with extremely high period we assume that they are not.

 Let $U$ denote the ring bounded by the two Jordan curves. Due to the attracting $2$-cycle, $J$ is not the whole sphere. Thus by Thm \ref{Julia}, $J$ has empty interior. Since $U$ contains interior points, $U \not\subset J$ so $F(f)\cap U\neq \emptyset$. In Fig \ref{fig:random}, we show the orbits of two points randomly generated from inside $U$ and the unit square; these orbits are overlaid on the image from Fig \ref{fig:1.9and1.5i}. 
 (In particular the cyan curves are the orbits of $x_1\approx 0.477204 + 0.366227i$ and $x_2\approx 0.820551 + 0.991023 i$.)
  From the robustness of these orbits under successive runs, we can deduce that iterates of randomly chosen points in $U$ stay in $U$. 
 
 \begin{figure}[h]
 	\centering
 	\includegraphics[width=10cm]{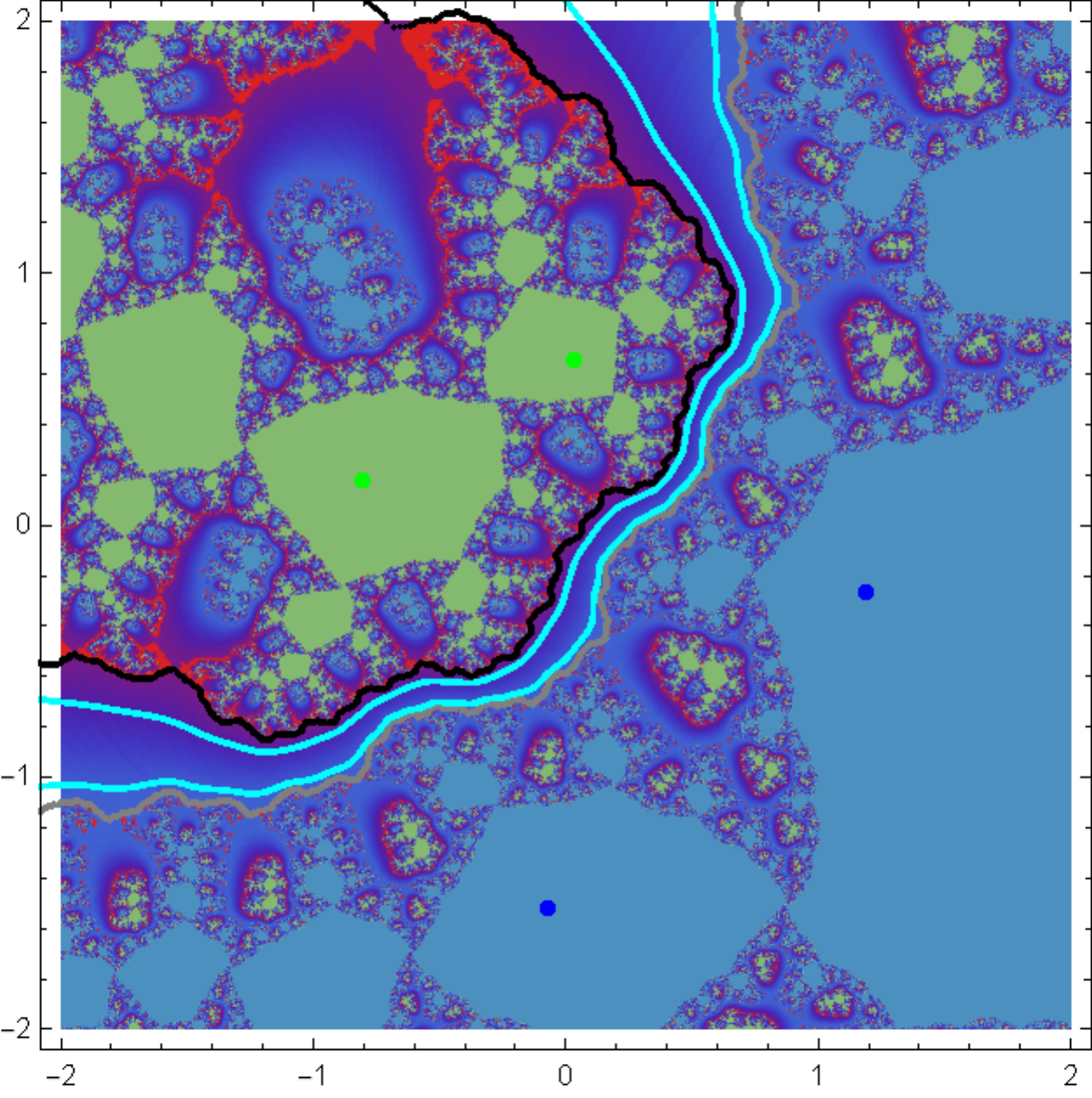}
 	\caption{Julia and Fatou set for the map $f_{1.9,1.5i}(z)$ with orbits of two random points inside the ring shown. }
 	\label{fig:random}
 \end{figure}
We now argue that randomly chosen points such as $x_1$ and $x_2$ are not all in the Julia set. By the presence of an attracting cycle, we established that $J\neq \mathbb{C}_{\infty}$. Therefore, $f$ must satisfy condition $(2)$ in Thm \ref{Thm:Ergodic}. So, if $m_2(J)= \alpha >0$ then $m_2$ almost every $x$ chosen randomly from $J$ satisfies $d(f^n(x),\mathit{Cl}(C^+))\rightarrow 0$. However, as shown in Fig \ref{fig:random}, $d(R^n(x),\mathit{Cl}(C^+))$ stays bounded away from $0$ for each randomly chosen $x\in U$. Therefore for almost every $x\in U$, $x\not \in J$ which implies that for almost every $x\in U$, $x\in F(f)$.

Since $m_2$ almost every $x$ in $U$ is in the Fatou set, it is left to determine what type of Fatou components are contained in $U$. For each randomly chosen $x$ in $U$, we can see that $x$ is not attracted to one of the period $2$ cycles, so the component of the Fatou set containing $x$ is not a preimage of one of the attracting period $2$ components. Additionally, $x$ is not attracted to a different (super)attracting or parabolic cycle since this would require a critical point to also be attracted to the same cycle. Thus, $U$ cannot contain a (super)attracting or parabolic cycle. 

We are left with the case that $U$ contains a Herman Ring or Siegel disk. Thm \ref{ringcycle} implies that the boundary of a Siegel disk or Herman ring must be contained in $\mathit{Cl}(C^+(R))$, for this map, this would mean the boundary of the disk or ring must lie on the black and/or gray curves in Fig \ref{fig:1.9and1.5i}, Fig \ref{fullring}, and Fig \ref{fig:random}. So all of $U$ must be contained in a single ring or disk. Also, note that Siegel disks and Herman rings cannot contain critical points for this would contradict that $f_{1.9,1.5i}$ is a homeomorphism on the disk or ring.  If we had a Siegel disk, since our maps are antipode preserving, there must be two disjoint disks, one bounded by the black curve and the other by the gray curve. This leaves us to conclude that the interior of the Siegel disk would have to contain the periodic orbit (and also the critical point attracted to it). Therefore we cannot have a Siegel disk and $U$ must be a Herman ring.

\textbf{Example 2:} Consider the map: $\ds f_{1.8,2i}(z)=\frac{z^3+1.8z^2+2i}{2iz^3-1.8z+1}.$ The Julia and Fatou sets for $f_{1.8,2i}$ can be seen in Fig \ref{fig:1.8and2i} 
(and Fig \ref{fig:curves}). We can argue similarly that (assuming we do not have an attractive or parabolic orbit of extremely high period) we must have a Herman ring. While we focus on $f_{1.8,2i}$ we can translate these arguments to any map of the form (\ref{degree3}) that produces similar orbits.

If $F(f_{1.8,2i})$ does not contain attractive or parabolic components, then it either contains a Siegel disk, Herman ring, or is empty. We note that if it is a cycle of Siegel disks, the cycle must be of period $\leq 2$ by looking at the postcritical set (by Thm \ref{ringcycle}). The lack of a neutral fixed point or period two orbit rules out this case; it remains to rule out the case $J=\mathbb{C}_{\infty}$. 

Let $V$ be the region bounded by $\mathit{Cl}\Big(\bigcup\limits_{n=1}^{\infty}f^n(c_1)\Big)$ and $\mathit{Cl}\Big(\bigcup\limits_{n=1}^{\infty}f^n(c_4)\Big)$ for $c_1$ and $c_4$ as defined for this map in Section \ref{section:ourmaps}; so $V$ is the region bounded by the black curves in Fig \ref{fig:curves}. We note that $c_4$ does not lie on the boundary of the annulus in the image; thus $V$ is only part of the annulus bounded by the postcritical set. 
\begin{figure}[h]
	\centering
	\includegraphics[width=10cm]{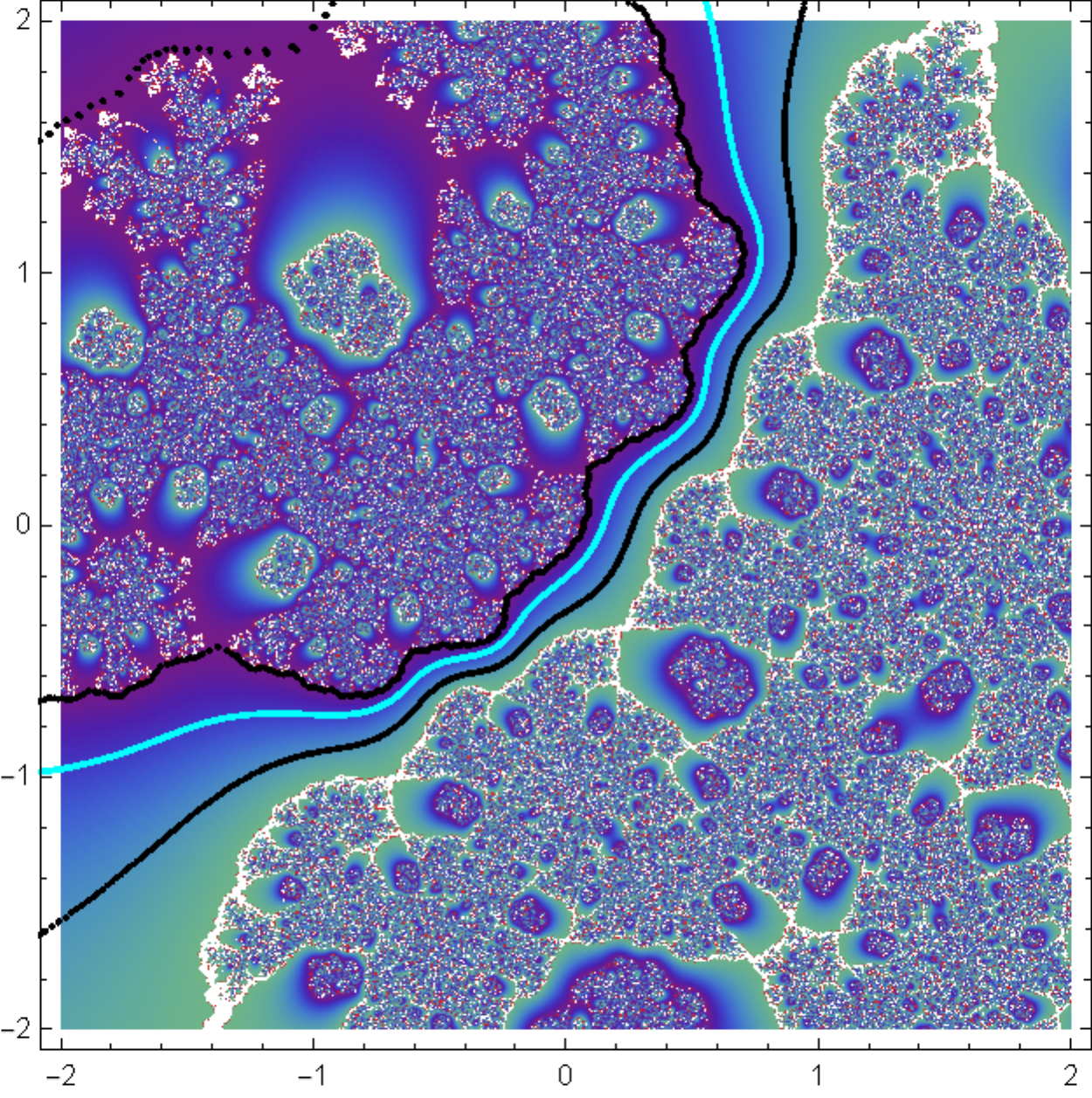}
	\caption{Julia and Fatou set for the map $f_{1.8,2i}(z)$ with orbits of two critical points and an orbit of a random point between the critical orbits.}
	\label{fig:curves}
\end{figure}
In Fig \ref{fig:curves}, we also show the orbit of a randomly chosen point $x$ inside $V$. In particular, the orbit of the point $x\approx 0.152142 + 0.00815055 i$ is shown in cyan. By iterating random points such as $x$, we see that $V$ appears to be forward invariant since iterates of points in $V$ seem to stay in $V$. We therefore assume that $V$ is forward invariant. With this assumption,  $V\subset\tilde{V}=\bigcup\limits_{n=1}^{\infty}f^{-n}(V)$. Then $f^{-1}(\tilde{V})=\tilde{V}$, so $\tilde{V}$ is an invariant subset of $\mathbb{C}_{\infty}$ and so is $U=\mathbb{C}_{\infty}\setminus \tilde V$. Clearly $\tilde{V}$ and $U$ are both sets of positive measure (since they both contain open sets). Thus we have two disjoint completely invariant sets of positive measure and $f_{1.8,2i}$ is not $m_2$-ergodic. 

We have eliminated the first case in Thm \ref{Thm:Ergodic}, so $f_{1.8,2i}$ must satisfy the second condition of the theorem.  As we want to show that $F(f)$ is non-empty, we assume $m_2(J)>0$. Thus if a random point $x$ is chosen from $J$, the set of positive measure, then $d(R^n(z),\mathit{Cl}(C^+))\rightarrow 0$. However we can see by plotting orbits of arbitrarily chosen random $x$, that the iterates stay bounded away from $\mathit{Cl}(C^+)$. Therefore if $m_2(J)>0$, this contradicts Thm \ref{Thm:Ergodic}, so $F(f)\neq \emptyset$ and the forward invariant component $F_0$ must be a Herman ring.

\section{Parameter spaces}\label{sec:parameters}
We discuss the parameter space for maps of the form $\ds f_{a,bi}(z)=\frac{z^3+az^2+bi}{biz^3-az+1}$ with $a,b\in \mathbb{R}$. Each point $(a,b)$ in the parameter space corresponds to the map $f_{a,bi}(z)$. We prove that the parameter space is symmetric about the $x$ and $y$ axes and compare two different methods for generating the parameter space. We also compare this parameter space to the one studied in \cite{GHawk}. The algorithms used in this section are discussed more in the master's project of the second author. 

\begin{figure}[h]
	\centering
	\includegraphics[width=10cm]{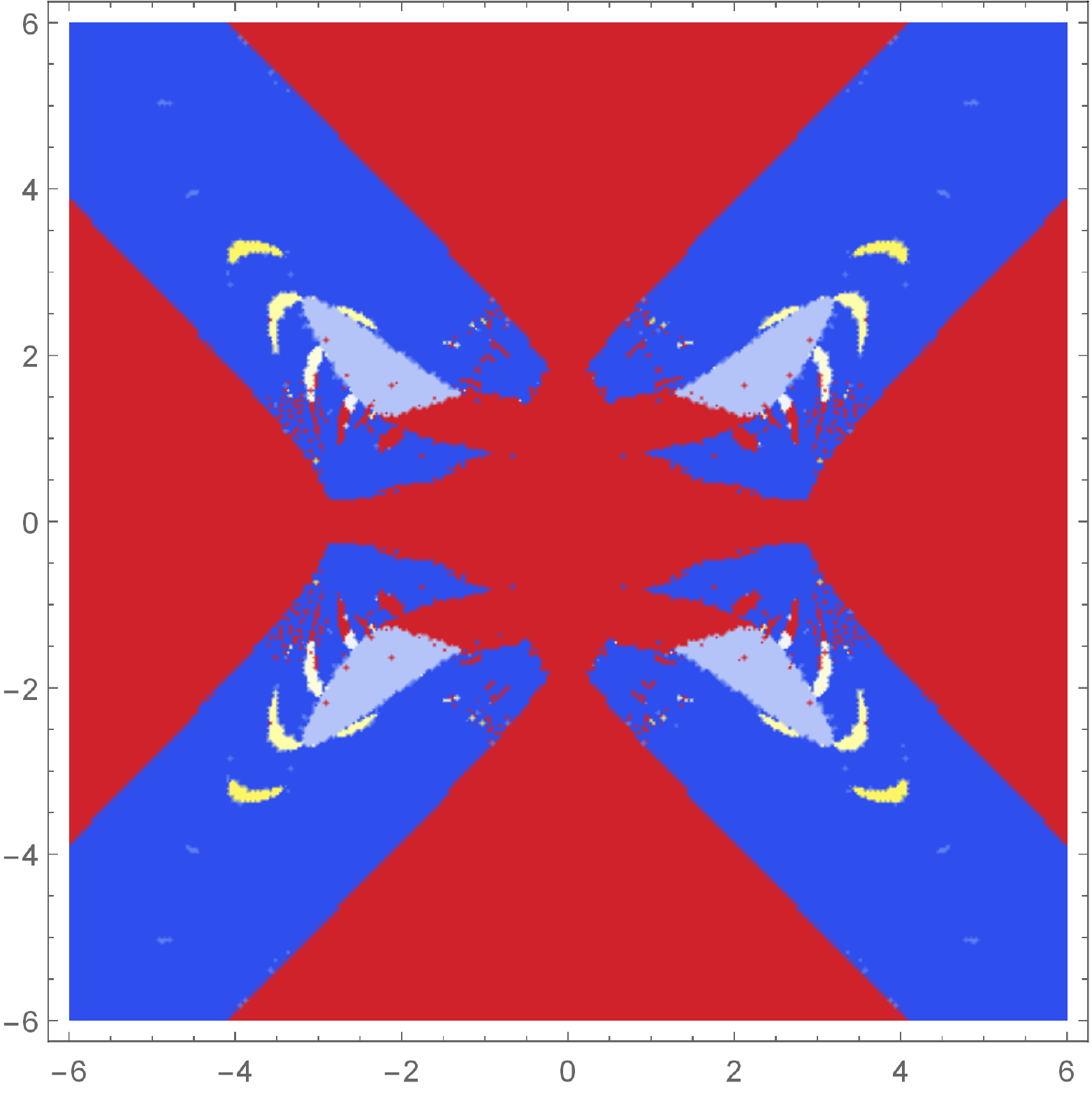}
	\caption{Parameter space for  $f_{a,bi}(z)=\frac{z^3+az^2+bi}{biz^3-az+1}$ generated by a derivative algorithm.}
	\label{fig:pspace}
\end{figure}

The parameter space for these maps can be seen in Fig \ref{fig:pspace}. We observe that for maps of this form, we get symmetry across both axes. To be more precise:

\begin{theorem}
	If $f_{c,di}(z)$ is of the form (\ref{degree3}) with $c,d \in \mathbb{R}$, then $f_{c,di}(z)$ is conjugate via a conformal map (either orientation preserving or reversing) to a map $f_{a,bi}(z)$ of the same form with $a\geq 0$ and $b\geq 0$.
\end{theorem}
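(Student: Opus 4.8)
The plan is to exhibit, for each sign change of the parameter pair $(c,d)$, an explicit conformal (or anti-conformal) automorphism $h$ of $\mathbb{C}_\infty$ whose conjugate $h\circ f_{c,di}\circ h^{-1}$ is again a map of the form (\ref{degree3}) with real first parameter and purely imaginary second parameter, and then to track how the parameters transform. Since the maps in (\ref{degree3}) are built to commute with $\phi(z)=-1/\bar z$, the natural candidates are the simplest automorphisms compatible with the antipodal structure: the rotation $h(z)=-z$ (orientation preserving) and the two reflections $h(z)=\bar z$ and $h(z)=-\bar z$ (orientation reversing).

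First I would compute the three conjugates directly. For the anti-conformal map $h(z)=\bar z$ one has $h\circ f\circ h^{-1}(z)=\overline{f(\bar z)}$, and a short computation using $\overline{c}=c$ and $\overline{di}=-di$ shows $\overline{f_{c,di}(\bar z)}=f_{c,-di}(z)$; thus conjugation by $z\mapsto\bar z$ realizes $(c,d)\mapsto(c,-d)$. Similarly, for $h(z)=-z$ one obtains $-f_{c,di}(-z)=f_{-c,-di}(z)$, realizing $(c,d)\mapsto(-c,-d)$, and for $h(z)=-\bar z$ one obtains $-\overline{f_{c,di}(-\bar z)}=f_{-c,di}(z)$, realizing $(c,d)\mapsto(-c,d)$. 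In each case the essential check is that after simplification the denominator returns to the normalized shape $b'iz^3-a'z+1$ with $a',b'$ again real, which is exactly what the sign bookkeeping guarantees.

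With these three identities in hand the conclusion is immediate: the involutions $(c,d)\mapsto(c,-d)$, $(c,d)\mapsto(-c,-d)$, and $(c,d)\mapsto(-c,d)$ generate the full group of sign changes on $(c,d)$. Given any $(c,d)\in\mathbb{R}^2$, I apply whichever of the four maps $z\mapsto z$, $z\mapsto\bar z$, $z\mapsto -z$, or $z\mapsto -\bar z$ sends $(c,d)$ into the closed first quadrant, producing a map $f_{a,bi}$ with $a=|c|\ge 0$ and $b=|d|\ge 0$ that is conjugate to $f_{c,di}$ via a conformal map (orientation preserving in the $z\mapsto -z$ case, orientation reversing in the reflection cases). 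This also accounts for the two-axis symmetry observed in Fig.\ \ref{fig:pspace}, since conjugate maps have the same dynamical classification.

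The main obstacle is computational rather than conceptual: one must carry out $\overline{f(\bar z)}$, $-f(-z)$, and $-\overline{f(-\bar z)}$ correctly, being careful that complex conjugation sends the coefficient $di$ to $-di$ but fixes the real coefficient $c$, and that numerator and denominator reassemble precisely into the form (\ref{degree3}). A secondary point worth recording is that each $h$ commutes with $\phi$ (for instance $h\circ\phi=\phi\circ h$ holds for $h(z)=\pm\bar z$ and for $h(z)=-z$), so conjugation by $h$ preserves the class of $\phi$-commuting rational maps; this explains a priori why each conjugate remains of the dianalytic form and provides a useful consistency check on the explicit calculations.
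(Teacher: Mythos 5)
Your proposal is correct and follows essentially the same route as the paper, which simply names the three conjugating maps $\overline{z}$, $-z$, and $-\overline{z}$ and asserts the result. You additionally work out the induced parameter involutions $(c,d)\mapsto(c,-d)$, $(-c,-d)$, $(-c,d)$ explicitly, which supplies the computational details the paper's one-line proof omits; these check out.
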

\begin{proof}

	Let $\phi_1(z)=\overline{z}$, $\phi_2(z)=-z$, and $\phi_3(z)=-\overline{z}$.   For each $c,d \in \C$ not both $0$, conjugating $f_{c,di}(z)$ by one  of these maps yields the result.
	
\end{proof}

\begin{figure}[h]
	\centering
	\includegraphics[width=10cm]{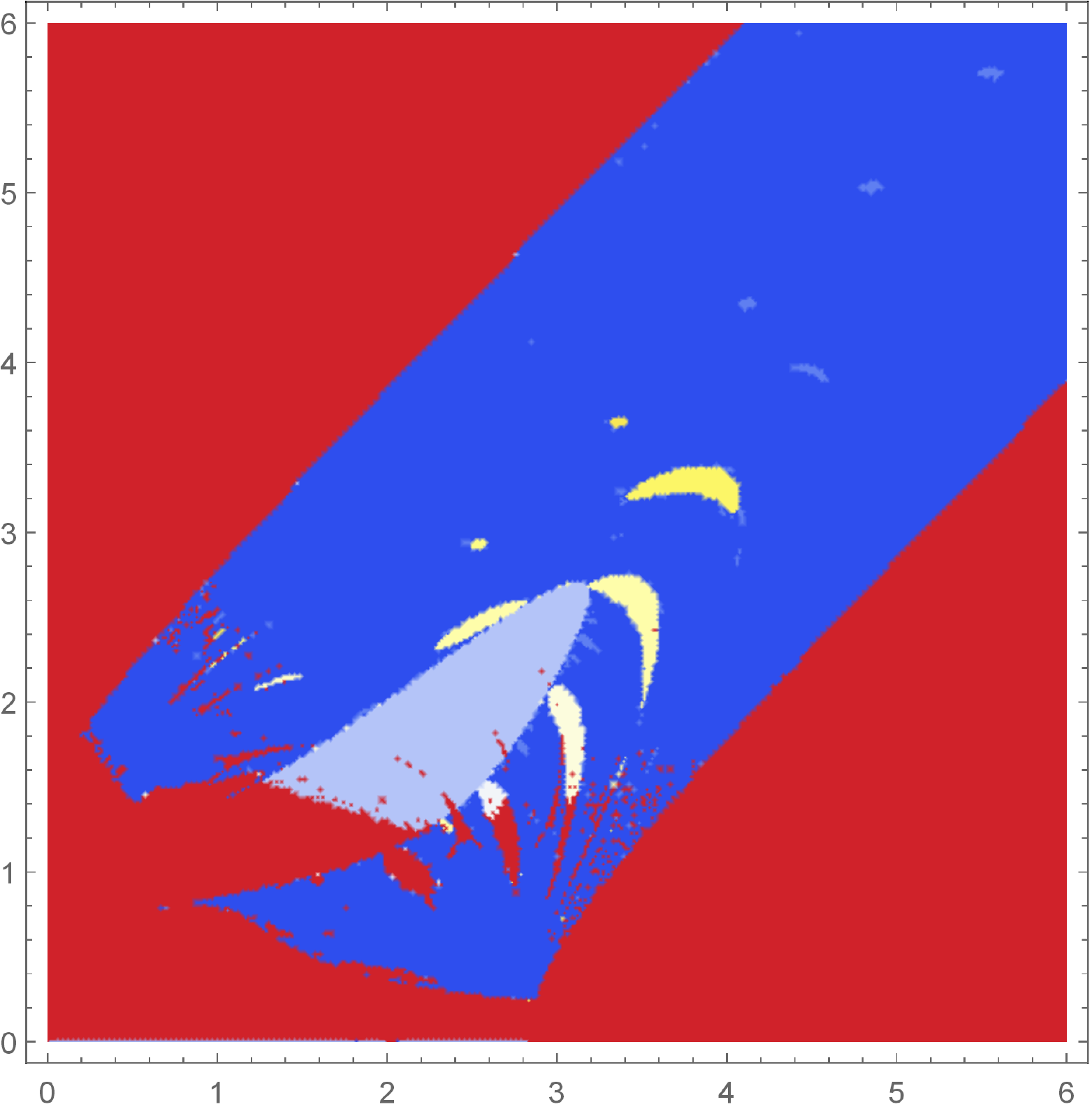}
	\caption{Parameter space for  $f_{a,bi}(z)$ zoomed into the first quadrant.}
	\label{fig:pspacezoomed}
\end{figure}

To visualize the parameter space, we again utilize dynamical properties of critical orbits. A zoomed in version of the first quadrant of Fig \ref{fig:pspace} can be seen in Fig \ref{fig:pspacezoomed}. Since we are interested in maps with Herman rings, we color any point where all critical points tend toward attractive cycles red. Points in dark blue indicate that no critical point for that map tends toward an attractive cycle; this is where we expect to find maps with Fatou set consisting of only Herman rings and their preimages. All other colors represent maps for which two critical points go to attracting cycles and two do not. The coloration of these points corresponds to the period of the attractive cycle(s), in particular, the lightest blue regions correspond to maps with period 2 cycles and a Herman ring. As the color changes from light blue to white to yellow, this corresponds to an increase in the period of the attractive orbit, with the brightest yellow corresponding to maps with an attractive period 12 orbit. The other specks of medium blue that are lighter than the dark blue but darker than the period 2 region, correspond to maps where two critical points converge to something periodic with period greater than 13. For Fig \ref{fig:pspace} and Fig \ref{fig:pspacezoomed} an algorithm involving derivative products  attributed to Buff and Henriksen is used. For more information on the algorithm, we refer to the master's project \cite{Rand}.

In this parameter space, there is still some question of what is happing in the red region in Fig \ref{fig:pspacezoomed}. As this is a perturbation off the imaginary axis in the parameter space in \cite{GHawk}, one might expect some similarities in their parameter space. We include the parameter space from \cite{GHawk} in Fig \ref{fig: HawkSpace} for comparison. In the image, black points correspond to maps with attractive fixed points, gray points correspond to maps with attractive period $2$ orbits, and other colors indicate higher periods. So, we expect that near the imaginary axis in our parameter space, we will also see period one and two behavior.

\begin{figure}[h]
	\centering
	\includegraphics[width=10cm]{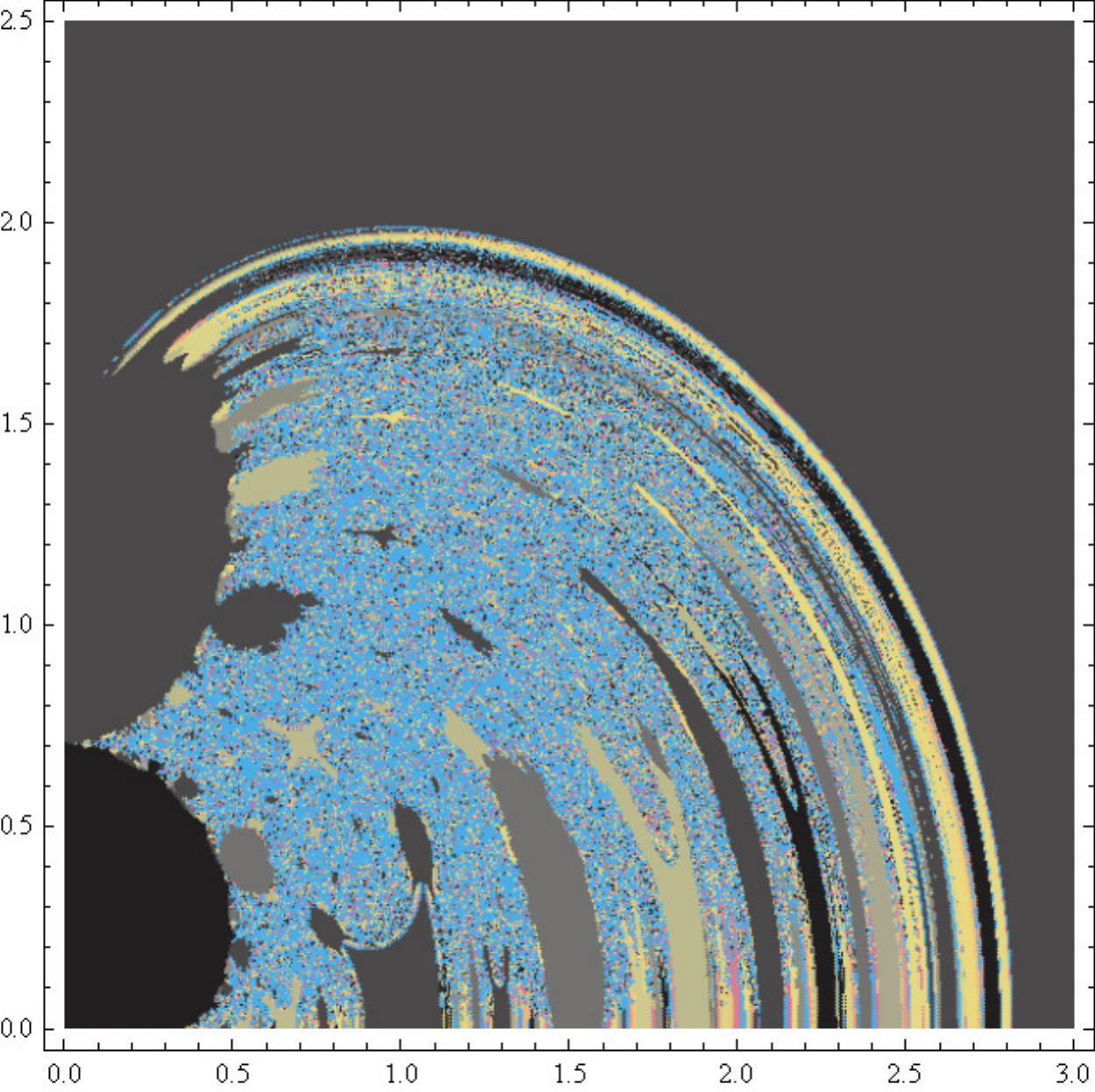}
	\caption{Parameter space for $f_{\alpha}(z)=\frac{z^3+\alpha}{-\overline{\alpha}z^3+1}$ from \cite{GHawk}. }
	\label{fig: HawkSpace}
\end{figure}

To this end, we also generate a parameter space to determine the period of attractive cycles for parameters that correspond to maps without Herman rings. In preliminary attempts, it appears that when all critical points converge to a periodic cycle, all cycles are of the same period. We therefore simplify the image and the algorithm by coloring points according to the lowest period found. 

In Fig \ref{fig: Attractive PSpace}, dark blue regions indicate that no critical point converge to an attractive cycle, medium blue indicates that at least one critical point is attracted to a fixed point, and light blue indicates that at least one critical point converges to a period $2$ orbit. Continuing into the yellows, reds, and whites these are orbits of higher period. 
\begin{figure}[h]
	\centering
	\includegraphics[width=10cm]{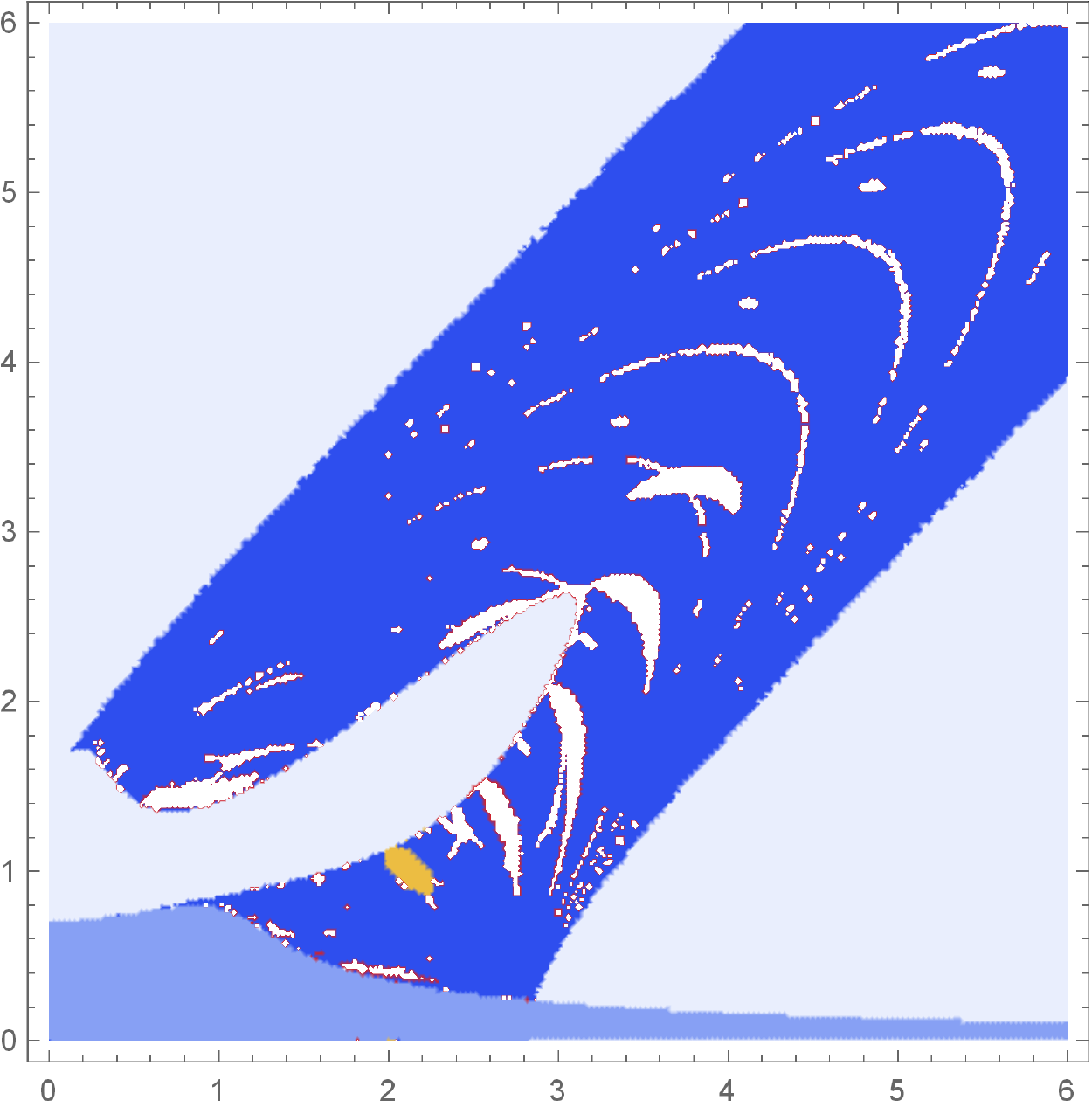}
	\caption{Alternate Parameter Space for $f_{a,bi}(z)$ generated by a convergence algorithm.}
	\label{fig: Attractive PSpace}
\end{figure}

As we would expect, parameters near the imaginary axis correspond to maps with attractive fixed points when $b$ is small and maps with attractive period two points when $b$ is larger. This bifurcation appears to occur around $b=1/\sqrt{2}$, which is consistent with the results form Goodman and Hawkins and Fig \ref{fig: HawkSpace}.

By comparing the two parameter space methods, (Fig \ref{fig:pspacezoomed} and Fig \ref{fig: Attractive PSpace}), we can see that in general for a parameter pair, $(a,b)$, the two methods find the same types of behavior. In particular, if a point is red in Fig \ref{fig:pspacezoomed}, the same point in Fig \ref{fig: Attractive PSpace} will be some color that corresponds to a map for which a critical point is attracted to a periodic cycle. Furthermore, if $(a,b)$ is a point in Fig \ref{fig:pspacezoomed} for which the derivative algorithm finds that only one of the antipodal pairs of critical points converges to a periodic cycle with period $k$, the convergence algorithm finds the same value for the period of the cycle. However there are some places where the second method, found ``attractive" orbits where the first method did not. To see this, look at the dark blue regions; for both algorithms, dark blue points indicate that no critical points converge to a periodic cycle. The dark blue region in Fig \ref{fig:pspacezoomed} is more uniform than the dark blue region in Fig \ref{fig: Attractive PSpace}: by this we mean that the dark blue strip in Fig \ref{fig: Attractive PSpace} has more regions of other colors than the same strip in Fig \ref{fig:pspacezoomed}. This means that the convergence algorithm finds attractive behavior for maps where the derivative algorithm does not. For example, for the map $f_{4.167, 4.06i}$ the convergence algorithm, determines that the map has a period $13$ cycle, while the derivative algorithm finds the map to have no attractive behavior. This discrepancy is due to the algorithms and is discussed more in \cite{Rand}.

There are other interesting maps in related parameter spaces that which we mention briefly.  The map $\ds f_{1 + .4 i,1.5}(z)=\frac{z^3+(1 + .4 i)z^2+1.5}{-1.5z^3-(1 - .4 i)z+1}$, whose
 Julia set is shown in Fig \ref{Period3},  has a Herman ring and two period $3$ orbits. 
\begin{figure}[h!]
	\centering
	\includegraphics[width=10cm]{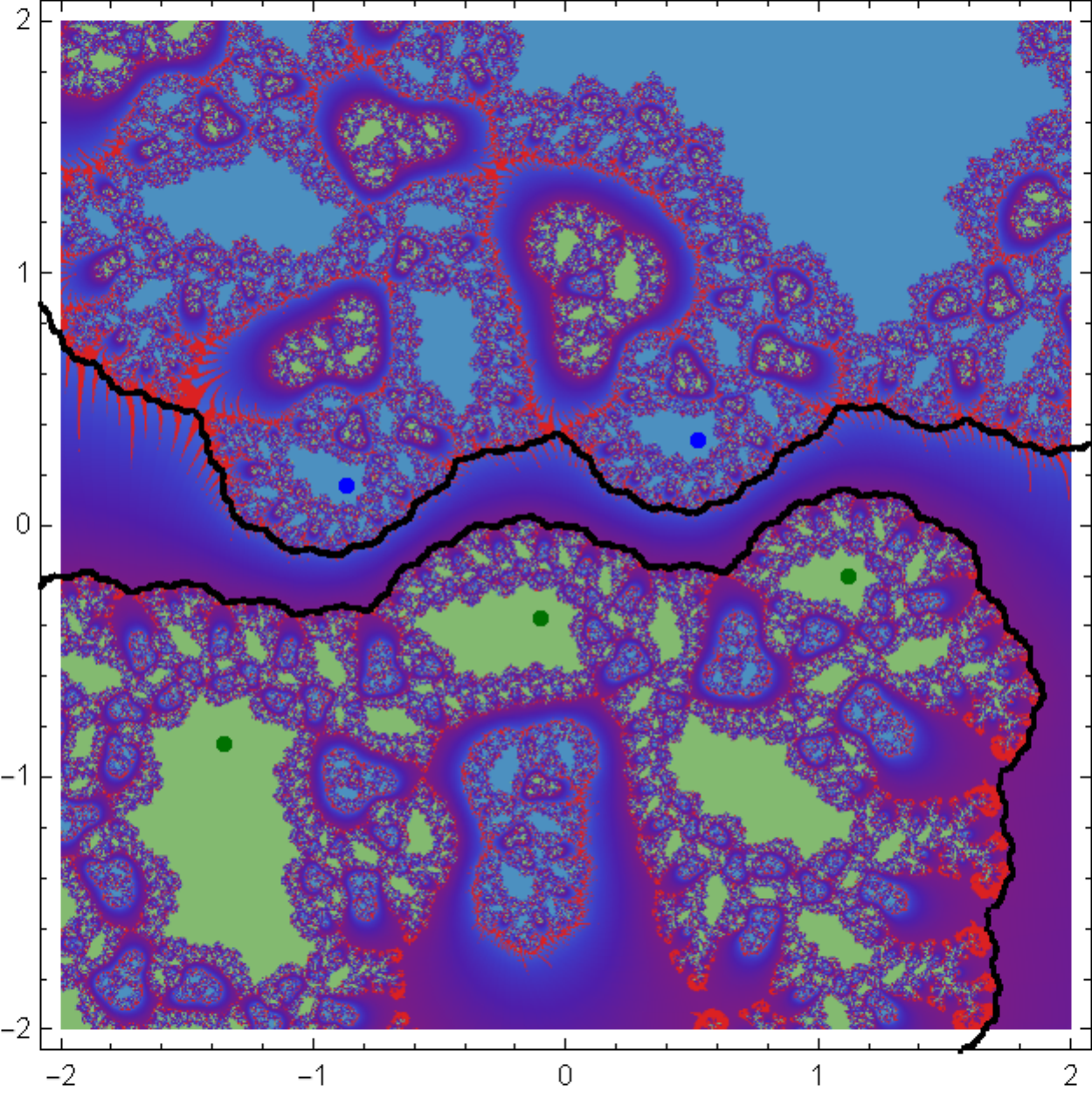}
	\caption{Julia and Fatou set for the map $f_{1 + .4 i,1.5}(z)=\frac{z^3+(1 + .4 i)z^2+1.5}{-1.5z^3-(1 - .4 i)z+1}$  shown with black critical orbits and attracting 3-cycles in blue and green.}
	\label{Period3}
\end{figure}
The coloration is similar to previous images in which blue points are attracted to the dark blue orbit, green points are attracted to the dark green orbit and points that map to the ring are colored purples to blues. While it looks like there are only two points in the blue periodic cycle, the third point in that orbit lies outside of the plot range. This map is a bit different from the others in that the periodic orbits do not lie entirely on one side of the unit circle. That is, two of the blue periodic points and one of the green periodic points are inside $\Delta$.

We have examples with two period $6$ cycles separated by a Herman ring, as can be seen in Fig \ref{Period6} for the map $\ds f_{2.606,1.507i}(z)=\frac{z^3+2.606z^2+1.507i}{1.507iz^3-2.606z+1}$. The image was generated similarly to the other Julia sets, however there is more red in the image because the convergence happens more slowly.
\begin{figure}[h!]
	\centering
	\includegraphics[width=10cm]{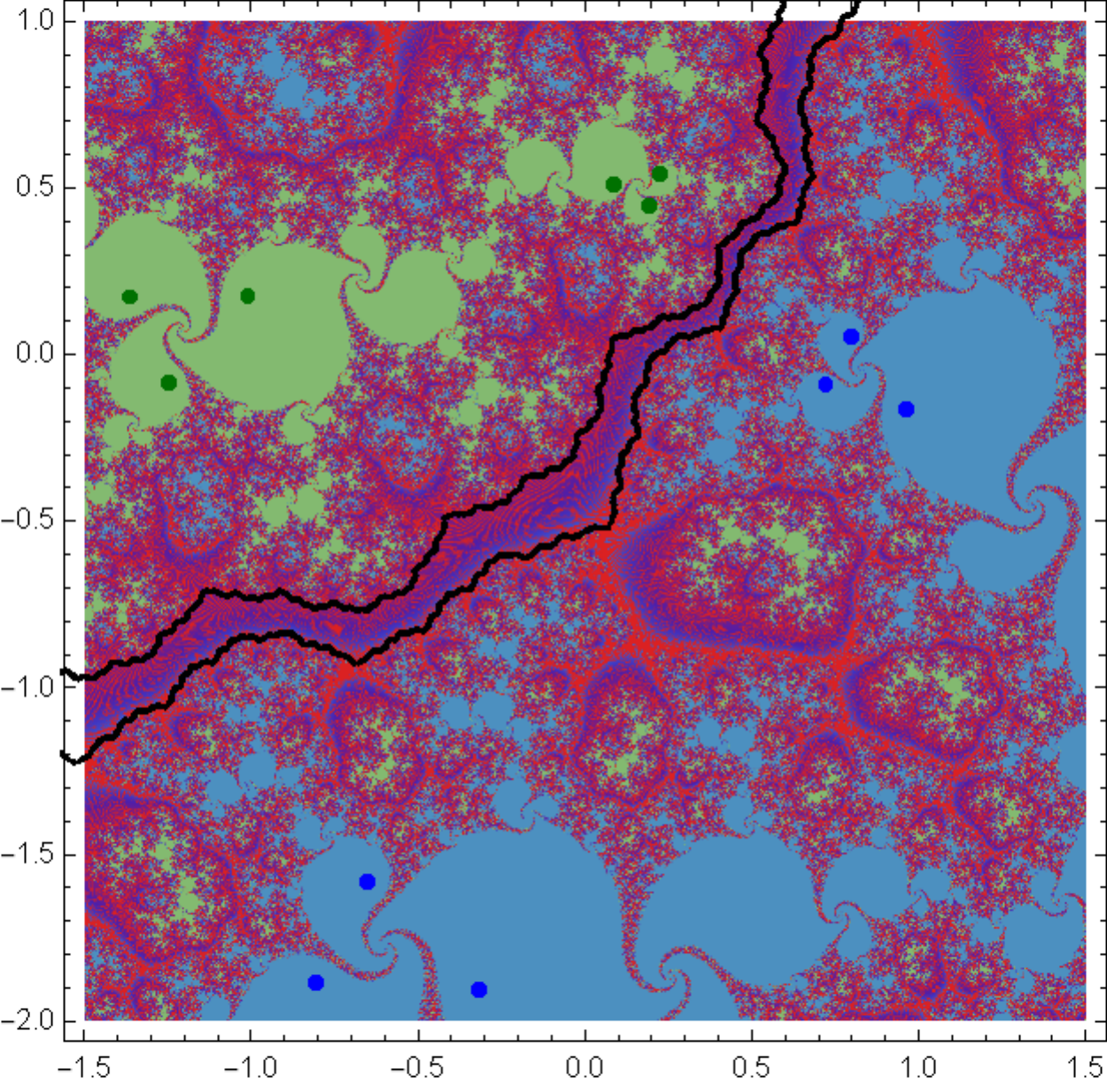}
	\caption{Julia and Fatou set for the map $f_{2.606,1.507i}(z)=\frac{z^3+2.606z^2+1.507i}{1.507iz^3-2.606z+1}$ shown with black critical orbits and attracting 6-cycles in blue and green.}
	\label{Period6}
\end{figure}

\nocite{*}
\bibliography{Hawkins_Randolph}
\bibliographystyle{amsplain}

\end{document}